\newtheorem{theorem}{Theorem}[section]
\newtheorem{definition}[theorem]{Definition}
\newtheorem{lemma}[theorem]{Lemma}
\newtheorem{proposition}[theorem]{Proposition}
\newtheorem{remark}[theorem]{Remark}
\newenvironment{proof}[1][Proof]{\textbf{#1.} }{\ \rule{0.5em}{0.5em}}
\newcommand{\refeqn}[1]{(\ref{#1})}
\newcommand{\virgolette}[1]{``#1''}
\newcommand{\cinf}[0]{C^{\infty}}
\newcommand{\spann}[0]{\operatorname{span}}
\newcommand{\jinf}[0]{J^{\infty}}
\begin{document}

\title{{\bf Finite dimensional solutions to SPDEs and the geometry of infinite jet bundles}}
\author{ Francesco C. De Vecchi\thanks{Dipartimento di Matematica, Universit\`a degli Studi di Milano, via Saldini 50, Milano, Italy, \emph{email: francesco.devecchi@unimi.it, francesco.devecchi.fdv@gmail.com}}}
\date{}

\maketitle

\begin{abstract}
Finite dimensional solutions to a class of stochastic partial differential equations are obtained extending  the differential constraints method
for deterministic PDE to the stochastic framework. A geometrical reformulation of the stochastic problem using the concept of infinite jet
bundles is provided and a practical algorithm for explicitly computing these finite dimensional solutions is developed. This method, covering
the majority of the current literature, is applied to a set of new SPDEs admitting finite dimensional solutions taken from Heath-Jarrow-Morton
framework, stochastic hydrodynamics and filtering theory.
\end{abstract}

\bigskip

\noindent \textbf{Keywords}: Finite dimensional solutions to SPDEs, Infinite jet bundles, Characteristics, Interest rate models, Finite dimensional filters.\\
\noindent \textbf{MSC numbers}: 60H15, 35A30, 35B06

\section{Introduction}

Lie symmetry analysis for ordinary and partial differential equations
(ODEs and PDEs respectively)  and its generalizations  are nowadays a classical research topic in applied mathematics, since they provide powerful and flexible tools
both for studying the qualitative behaviour of differential equations and for obtaining some explicit expression of their solutions
(see, e.g., \cite{Gaeta,Olver1993,Stephani1989}).\\
The exploitation of these techniques for studying stochastic differential equations (SDEs) is  more recent but is arousing a growing
interest (see, e.g., \cite{Albeverio,DeLara1995,DMU1,Gaeta2017} for Brownian-motion-driven SDEs and
\cite{Albeverio2017,Glover1990,Cami2009,Liao2009} for SDEs driven by more general semimartingales). This topic finds  compelling applications in
the detection and study of stochastic systems having closed formulas or a great analytical tractability (see, e.g.,
\cite{Craddock2007,Craddock2012} where Lie symmetry techniques are applied to the research of the transition probability density of diffusion processes,
\cite{DMU2,Kozlov2010,Cami2009} where they are exploited for reducing a SDEs to quadratures, \cite{DeVecchi2017,Malham2008} where they are
used for improving the standard numerical schemes for integrating SDEs and
\cite{Albeverio,Lescot,Zambrini2010} where they are applied in order to find conservation laws or martingales related to the considered process).\\
Although the notion of infinitesimal symmetry finds many applications in the theory of finite dimensional SDEs,
to the best of our knowledge and with the notable exception of the works of Cohen De Lara (see \cite{DeLara2,Delara1998}),
there is no direct extension of Lie symmetry techniques for PDEs in the stochastic partial differential equations (SPDEs) setting.\\

\noindent In this paper we propose a first application of the geometric methods developed in the study of infinitesimal symmetries of PDEs to SPDEs. In
particular we generalize the following well known property of symmetric PDEs: the knowledge of a symmetry algebra for a given PDE can be used to
reduce the PDE to a system of ODEs by looking for  solutions which are invariant with respect to the symmetry algebra (see \cite{Olver1993}).
In the deterministic framework this kind of reduction has
been generalized with the introduction of the concept of differential constraints (see
\cite{Galaktionov,Meleshko,Olver1}) and in the following we use these deterministic ideas for explicitly constructing (when possible) special solutions to an SPDE reducing it to a finite dimensional SDE.\\

\noindent One of the main differences between the stochastic and the deterministic setting is that, while the solutions to an evolution PDE are in some respect one dimensional (and so it is always possible to reduce a PDE to a finite dimensional ODE), the solutions to a generic SPDE cannot be described using only a finite dimensional stochastic system since they usually spread in all the infinite dimensional space where the SPDE is set. For this reason, if there exist some solutions to an SPDE which can be described using a finite dimensional SDE, these solutions are called \emph{finite dimensional solutions} to the considered SPDE. The problem of finding SPDEs admitting finite dimensional solutions and the explicit computation of these type of solutions has a long tradition in stochastic analysis and finds
interesting applications in several mathematical modelling. \\
For example this problem arises  in stochastic filtering, in particular in the research of finite dimensional filters, i.e. stochastic filters described by a finite number of parameters such as the Kalman filter or the Benes filter (see \cite{Bain}). This problem is equivalent to studying the finite dimensional solutions to a special linear SPDE called Zakai equation (see the classical works \cite{Albeverio1990,Benes1981,Brockett1981,Hazewinkel1989,Yau1994} and the papers of Cohen De Lara \cite{DeLara1bis,DeLara2}).\\
A second application of the problem of finding finite dimensional solutions to SPDEs is the Heath-Jarrow-Morton (HJM) model used in mathematical finance for describing the interest rate (see \cite{Heath1992}). In this setting it is important to understand when a given finite dimensional family of curves can consistently describe the interest rate (this is the consistency problem studied in \cite{Bjork1999,Filipovic2001}). This problem is equivalent to establishing when a special class of SPDEs admits finite dimensional solutions (see in particular the works \cite{Filipovic1,Filipovic3,Filipovic2,Tappe2016} of Filipovic, Tappe and Teichmann). \\
A third application of our results is to the study of stochastic soliton equations. In addition to the pioneering work of Wadati on the stochastic KdV equation preserving soliton solutions (see \cite{Wadati,Xie}), we have also been inspired by the recent growing  interest in the study
of variational stochastic systems of hydrodynamic  type (see e.g. \cite{Arnaudon2012,Holm2015,Cruzeiro2016}). In particular we recall \cite{Holm2016}, where Holm and Tyranowski find many families of finite dimensional soliton type solutions to a physically important
stochastic perturbation of Camassa-Holm equation (see also \cite{Crisan2017}).\\

\noindent In this paper we consider  SPDEs of the form
\begin{equation}\label{equation_introduction_SPDE1}
dU^i_t(x)=\sum_{\alpha=1}^r F^i_{\alpha}(x,U_t(x),\partial^{\sigma}(U_t(x))) \circ dS^{\alpha}_t,
\end{equation}
where $x \in M=\mathbb{R}^m$, $U_t(x)$ is a semimartingale taking values in  $N=\mathbb{R}^n$, $F^i_{\alpha}(x,u,u_{\sigma})$ are smooth
functions of the independent coordinates $x^i$, the dependent coordinates $u^i$ and their derivatives $u^i_{\sigma}$ (here $\sigma \in
\mathbb{N}_0^m$ is a multi-index denoting the numbers of derivatives with respect to the coordinates $x^i$), $S^1,...,S^r$ are $r$ continuous
semimartingales and $\circ$ denotes the Stratonovich integration.\\
 In this framework  we are interested in determining finite dimensional solutions to SPDE
\refeqn{equation_introduction_SPDE1} and we tackle this problem using the geometry of the
infinite jet bundle $\jinf(M,N)$ of the functions defined on $M \subset \mathbb{R}^m$ and taking values in $N \subset \mathbb{R}^n$. Jet
bundles have been introduced by Charles Ehresmann and provide a very useful tool for a modern approach to Lie symmetry analysis allowing a natural geometric
interpretation of deterministic differential equations. The infinite jet bundle $\jinf(M,N)$ is an infinite dimensional manifold modelled on
$\mathbb{R}^{\infty}$ whose main advantage, with respect to the usual infinite dimensional (Banach or
Fr\'echet) functional spaces such as $L^2(M,N)$ or $\cinf(M,N)$, consists in the existence of a simple coordinate system which can be exploited in explicit computations. \\
In this setting we prove that looking for finite dimensional solutions to SPDE
\refeqn{equation_introduction_SPDE1} is equivalent to
establishing  under which conditions the solution process $U_t(x)$ to
SPDE \refeqn{equation_introduction_SPDE1} can be written in the form
$$U_t(x)=K(x,B^1_t,...,B^k_t),$$
where $K:M \times \mathbb{R}^k \rightarrow N$ is a smooth function of all its variables and $B_t=(B^1_t,...,B^k_t) \in \mathbb{R}^k$ is a
stochastic process satisfying a suitable finite dimensional SDE.\\
Working with the infinite jet bundle formalism, an SPDE becomes an infinite dimensional (ordinary) SDE in $\jinf(M,N)$ and the function $K(x,b)$ can be naturally associated with a finite dimensional
 submanifold $\mathcal{K}$ of $\jinf(M,N)$. Therefore we can approach the problem of finding finite dimensional solutions to equation \refeqn{equation_introduction_SPDE1} in a  completely
geometrical way. Indeed, with the functions $F_{\alpha}=(F^1_{\alpha},...,F^{n}_{\alpha})$ defining the SPDE
\refeqn{equation_introduction_SPDE1} it is possible to associate a set of vector fields $V_{F_{\alpha}}$ defined in $\jinf(M,N)$, and the
probabilistic problem of finding finite dimensional solutions to SPDE \refeqn{equation_introduction_SPDE1} is equivalent to find a
submanifold $\mathcal{K}$ such that the vector
fields $V_{F_{\alpha}}$ are tangent to $\mathcal{K}$. \\

\noindent It is important to note that our geometrical reinterpretation of the problem needs a suitable definition of solutions to SPDE
\refeqn{equation_introduction_SPDE1}. In order to give such a definition we use the notion of semimartingales smoothly depending  on some spatial
parameters proposed by Kunita in \cite{Kunita1990}. With this probabilistic tool we can give a rigorous sense to the intuitive definition of
solution to SPDE \refeqn{equation_introduction_SPDE1} based on the idea of taking a process $U_t(x)$ depending both on $t$ and $x$ and smooth
with respect to $x$ and then verifying equation \refeqn{equation_introduction_SPDE1} by replacing $U_t(x)$ in \refeqn{equation_introduction_SPDE1}
for any fixed $x \in M$. We compare this notion of solution with the usual ones based on the martingale calculus in Hilbert spaces of
\cite{DaPrato1992}, proving their equivalence under simple common hypotheses on the Hilbert space and on the process $U_t(x)$.\\

\noindent Once the probabilistic problem has been transformed into a geometric one, we can tackle the latter using natural tools developed in the geometric
theory of deterministic PDE. First of all we propose a necessary condition on the coefficients $F^i_{\alpha}$ (see
Proposition \ref{proposition_necessary}) for the existence of finite dimensional solutions to SPDE \refeqn{equation_introduction_SPDE1}.
Furthermore we provide a sufficient condition (proved in our paper \cite{DMevolution}) for the existence of finite dimensional solutions to SPDEs. This sufficient condition requests that
the vector fields $V_{F_{\alpha}}$ form a finite dimensional Lie algebra and they admit characteristic flow (the notion of characteristic flow
of a vector field in $\jinf(M,N)$ is a generalization of the usual definition of characteristics of a first order scalar PDE in the
$\jinf(M,N)$ setting). Under these hypotheses, in Theorem \ref{theorem_main1} and Theorem \ref{theorem_main2}, for any smooth initial
condition for equation \refeqn{equation_introduction_SPDE1}, we are able to construct a finite dimensional manifold $\mathcal{K}$ which
guarantees the existence of finite dimensional solutions to the considered SPDE.\\

\noindent We exploit the explicit construction method given in the proof of  Theorem \ref{theorem_main1} and Theorem \ref{theorem_main2} in order to develop  a practical algorithm (see
Section \ref{section_general_algorithm}), which  is applied to three selected examples taken from three different classical fields (HJM
theory, hydrodynamic and filtering theory) where finite dimensional solutions to SPDEs have their applications. The first example
is a model for HJM theory with proportional volatility, considered by Morton in his thesis \cite{Morton1989}, for which we give for the first
time, to the best of our knowledge, an explicit solution formula. The second example is a stochastic perturbation of the Hunter-Saxton equation
which is a simplification of the stochastic Camassa-Holm equation considered in \cite{Holm2016}. The third example is inspired by
filtering theory, and is an extension of the well known formulas of Fourier transform of affine processes (see \cite{affine}).\\

\noindent The methods proposed in this paper have been  deeply inspired by the previous works on finite dimensional solutions to SPDEs. In
particular our setting can be seen as a non-trivial generalization of the results proposed in \cite{DeLara1bis,DeLara2} by Cohen De Lara for
studying Zakai
equation to the case of general non-linear SPDEs of the form \refeqn{equation_introduction_SPDE1}.  Moreover, the
  relation between the  Lie algebra generated by the operators $F_i$ and the existence of finite dimensional filters  can be found
  in the classical  literature on the subject (see \cite{Benes1981,Brockett1981,Yau1994}). Indeed the necessary conditions obtained in Proposition
  \ref{proposition_lie_algebra} are, in the case of Zakai equation, equivalent to the conditions obtained for the existence of finite dimensional filters. \\
Furthermore the works of Filipovic, Tappe and Teichmann about finite dimensional solutions to HJM equation triggered a part of the paper.
In particular Theorem \ref{theorem_main1} and Theorem \ref{theorem_main2} are reformulations of \cite{Filipovic2}, where the use of the
convenient  setting  of global analysis (\cite{Michor1997}) is replaced by the infinite jet bundle geometry and  the characteristics of
Section \ref{section_characteristics}.\\
On the other hand our work introduces some novelties. First of all we propose an unified point of view on the subject which provides, for some
respects,  a generalization of the current literature. Indeed the form of equation \refeqn{equation_introduction_SPDE1} is completely general
and includes as special cases both the Zakai equation considered by Cohen De Lara and the semilinear SPDEs considered by Filipovic, Tappe and
Teichmann. Furthermore, Theorem \ref{theorem_main1} and Theorem \ref{theorem_main2} allow us to construct all the smooth solutions considered
by the previous methods. Nevertheless, our perspective should be considered as complementary and not as alternative to the previous ones.
Indeed we consider only smooth solutions to SPDE \refeqn{equation_introduction_SPDE1}: Theorem \ref{theorem_main1} is proved only in smooth
setting,  although, restricting the generality of equation \refeqn{equation_introduction_SPDE1}, it could be extended to the non-smooth framework. Moreover,
using Theorem \ref{theorem_main1}, we are able to construct \emph{one} solution to equation \refeqn{equation_introduction_SPDE1} among the many
possible smooth solutions with the same initial data. In fact, if we do not restrict the class of the possible solutions to a suitable space of
functions, we have not a uniqueness result for equation \refeqn{equation_introduction_SPDE1}. For this reason, once we construct the solution with
our method we should, a posteriori, prove that the solution belongs to a suitable space of functions where a uniqueness result for
SPDE \refeqn{equation_introduction_SPDE1} holds. This feature is a consequence of the generality of our methods: indeed, if we are interested in constructing
solutions belonging to a given class of functions, we should use different (more analytic) methods such as those proposed in the
previous literature.\\
A second novelty of our perspective is that we provide an algorithm for the explicitly computation of  finite dimensional solutions to SPDEs
which covers all the relevant cases considered in the current literature. Furthermore we propose new examples of interesting SPDEs, among which
all the  SPDEs considered in Section \ref{section_examplesbis}, as well as  HJM model considered in Section \ref{subsection_HJM}, whose
explicit solution was not known.\\

\noindent We conclude the introduction with a notice to the reader. This work aim at providing both a theoretical framework for handle the problem of finite dimensional solutions to SPDEs and a practical and general method for finding these solutions. For this reason the work can be divided in two parts: the first part (Sections \ref{prel}, \ref{section_finite} and \ref{section_characteristics}) deals with the theoretical foundation while the second part (Sections \ref{section_general_algorithm} and \ref{section_examplesbis}) exhibits the practical algorithm. The reader  only interested in  computational aspects, after reading Section \ref{subsection_informal}, can skip the first part and look directly the second one.\\

\noindent The paper is organized as follows. Section \ref{prel} introduces the preliminaries on the geometry of infinite jet bundles necessary in our theory. The relationship between the infinite jet bundles and the problem of finding finite dimensional solutions to SPDEs is discussed in Section \ref{section_finite}. In Section \ref{section_characteristics} we propose some general theorems for the construction of finite dimensional solutions to SPDEs and, in Section \ref{section_general_algorithm}, we use these results to develop an explicit algorithm. Finally, in Section \ref{section_examplesbis}, we apply our results  to three selected examples.

\section{Preliminaries}\label{prel}

In this section we collect  some basic facts about (infinite) jet bundles in order to provide the necessary geometric tools for our aims.
 Einstein summation convention  over repeated indices in used throughout the paper.
\\

\subsection{An informal introduction to the geometry of $\jinf(M,N)$}\label{subsection_informal}

We start with an informal introduction to the geometry of $\jinf(M,N)$, where $M,N$ are two open subsets of $\mathbb{R}^m,\mathbb{R}^n$
respectively.
The main advantage of the  infinite jet bundle setting, with respect to the analytic Fr\'echet
spaces approach, relies on the  computational aspects which turn out to be definitely simpler.\\

Let $C^k(M,N)$ be the infinite dimensional Fr\'echet space  of $k$ times differentiable functions defined on $M$ and taking values in $N$. We can associate with
$C^k(M,N)$ the  finite dimensional manifold $J^k(M,N)$
identifying  $f,g:M \rightarrow N$ whenever $g(x_0)=f(x_0)$ and
$\partial^{\sigma}(f)(x_0)=\partial^{\sigma}(g)(x_0)$, where $\sigma \in \mathbb{N}_0^n$ is a multi-index with $|\sigma|=\sum_r \sigma_r
\leq k$. The space $J^k(M,N)$ is called $k$-jets bundle of functions from $M$ into $N$ and can be endowed with a natural coordinate system.
If $x^i$ is the standard  coordinate system on $M$ (the space
of independent variables) and $u^j$ is the standard  coordinate system on $N$ (the space of dependent variables), a coordinate
system on $J^k(M,N)$ is given  by $x^i,u^j$ and all the variables $u^i_{\sigma}$, where $|\sigma|\leq k$, which formally represent the  derivative of the functions $u^j(x)$.
The smooth manifold $J^k(M,N)$ is a smooth vector bundle on $M$ with  projection $\pi_{k,-1}:J^k(M,N)
\rightarrow M$ given by
$$\pi_{k,-1}(x^i,u^j,u_{\sigma}^j)=x^i.$$
With any function $f \in C^k(M,N)$ we can associate a continuous section of the bundle
$(J^k(M,N),M,\pi_{k,-1})$ in the following way
$$ f \longmapsto D^k(f)(x)=(x, u^j=f^j(x),u^j_{\sigma}=\partial_{\sigma}(f^j)(x)).$$
Moreover, for any $k,h \in \mathbb{N}$ with $h < k$, there is a natural projection $\pi_{k,h}: J^k(M,N) \to J^h(M,N)$ given by
$$\pi_{k,h}(x^i,u^j,(u^j_{\sigma})|_{|\sigma|\leq k})=(x^i,u^j,(u^j_{\sigma})|_{|\sigma|\leq h}).$$
This allows us to consider the space $\jinf(M,N)$  defined  as the inverse limit of the sequence of projections
$$M \stackrel{\pi_0}{\leftarrow} M \times N=J^0(M,N) \stackrel{\pi_{1,0}}{\leftarrow} J^1(M,N) \stackrel{\pi_{2,1}}{\leftarrow} ...
\stackrel{\pi_{k,k-1}}{\leftarrow} J^k(M,N) \stackrel{\pi_{k+1,k}}{\leftarrow} ... $$
Analogously to $J^k(M,N)$, also $\jinf(M,N)$ has
a natural coordinate system given by $x^i,u^j$ and $u^j_{\sigma}$, with no bound on $|\sigma|$.\\
Since $\jinf(M,N)$ is not a finite dimensional manifold, but a Fr\'echet manifold modelled on $\mathbb{R}^{\infty}$
(see, e.g., \cite{Hamilton1982} for an introduction to the concept), working with  spaces of
smooth functions defined on $\jinf(M,N)$ is quite difficult. On the other hand,   the explicit coordinate system on $\jinf(M,N)$ suggests the possibility of restricting
 to a suitable space of smooth functions on $\jinf(M,N)$ which permits explicit
calculations.
In fact, if we consider the space
$$ \mathfrak{F}=\bigcup_{k}\mathfrak{F}_k,$$
where $\mathfrak{F}_k$ is the set of smooth functions defined on $J^k(M,N)$, i.e. $F \in \mathfrak{F}_k$ if $F$ is of the form
$F(x^i,u^j,u^j_{\sigma})$ with $|\sigma|\leq k$,  $\mathfrak{F}$ is the set of functions  depending only on a finite subset of coordinates $x^i,u^j,u^j_{\sigma}$.
Given any vector field $V \in T\jinf(M,N)$  of the form
$$V=\phi^i \partial_{x^i}+\psi^j \partial_{u^j}+\psi^j_{\sigma} \partial_{u^j_{\sigma}},$$
where $\phi^i, \psi^j, \psi^j_{\sigma}$ are smooth functions on $\jinf(M,N)$,  if $\phi^i, \psi^j, \psi^j_{\sigma} \in \mathfrak{F}$,  we have that  $V(\mathfrak{F}) \subset \mathfrak{F}$.
In the following we only consider vector fields $V$ whit $\phi^i, \psi^j, \psi^j_{\sigma} \in \mathfrak{F}$.\\
Therefore, given two vector fields $V_1,V_2$, we can define a Lie bracket  given by
$$[V_1,V_2]=(V_1(\phi^i_{2})-V_2(\phi^i_1))\partial_{x^i}+(V_1(\psi^j_2)-V_2(\psi^j_1))\partial_{u^j}+(V_1(\psi^j_{\sigma,2})-V_2(\psi^j_{\sigma,1}))
\partial_{u^j_{\sigma}}.$$
We recall that in $\jinf(M,N)$ one can naturally define the formally integrable Cartan distribution $\mathcal{C}=\spann\{D_1,...,D_m \}$ generated by  the vector fields
$$D_i=\partial_{x^i}+\sum_{k,\sigma} u^k_{\sigma+1_i} \partial_{u^k_{\sigma}}$$
satisfying $[D_i,D_j]=0$.
Another important class of vector fields in $\jinf(M,N)$ is given by the vector fields $V^e$  commuting with all $D_i$. It is possible to prove that $V^e$
commutes with all $D_i$ if and only if $V^e$ is of the form
$$V^e=F^j\partial_{u^j}+D^{\sigma}(F^j) \partial_{u^j_{\sigma}},$$
where $F^j \in \mathfrak{F}$. We say that $V^e$ is an evolution vector field generated by the function $F=(F^1,...,F^n) \in \mathfrak{F}^n$ and we write $V^e=V_F$. The Lie
brackets between two evolution vector fields is a new evolution vector field. This means that, for any $F,G \in \mathfrak{F}^n$, there exists a unique
function $H \in \mathfrak{F}^n$ such that $[V_F,V_G]=V_H$. Denoting by $H=[F,G]$, it is simple to prove that the brackets $[\cdot,\cdot]$  make $\mathfrak{F}^n$
an infinite dimensional Lie algebra.\\

Using   the natural projection $\pi_k:\jinf(M,N) \rightarrow J^k(M,N)$ of $\jinf(M,N)$ on $J^k(M,N)$, it is possible to
define a useful notion of smooth submanifold of $\jinf(M,N)$. A subset $\mathcal{E}$  of $\jinf(M,N)$ is a submanifold of $\jinf(M,N)$ if, for
any $p \in \mathcal{E}$, there exists a neighborhood $U_p$ of $p$ such that $\pi_h(\mathcal{E} \cap U_p)$ is a
submanifold of $J^h(M,N)$ for $h>H_p$.\\
If, for any $p \in \mathcal{E}$, all the submanifolds $\pi_h(\mathcal{E} \cap U_p)$ with $h>H_p$   have the
 same finite dimension $L$, we say that $\mathcal{E}$ is an $L$-dimensional submanifold of $\jinf(M,N)$.
In particular, given an $L$-dimensional  manifold $B$ and a smooth immersion $K:B \rightarrow \jinf(M,N)$, for any point $y \in B$ there exists
a neighborhood $V$ of $p$ such that $K(V)$ is a finite dimensional submanifold of $\jinf(M,N)$. A vector field $V \in T\jinf(M,N)$ is tangent to
the submanifold $\mathcal{E}$ if, for any $h \in \mathfrak{F}$ such that $h|_{\mathcal{E}}=0$, we have  $V(h)|_{\mathcal{E}}=0$. In this case we write $Y \in T\mathcal{E}$.

\begin{definition}\label{definition canonical_submanifold}
A submanifold $\mathcal{E}$ of $\jinf(M,N)$ is  a \emph{canonical submanifold} in and only if  $\mathcal{C} \subset T\mathcal{E}$.
Any canonical submanifold $\mathcal{E}$ can  be locally described as the set of zeros of a finite number of smooth independent
functions $f_1,...,f_L$ and of all their differential consequences $D^{\sigma}(f_i)$.\\
\end{definition}

  A  finite dimensional smooth canonical submanifold $\mathcal{K}$   is called integral manifold of the Cartan distribution. In order to construct an integral manifold of the Cartan
distribution  we recall that  $\jinf(M,N)$ is a smooth bundle on $M$ with  projection $\pi:\jinf(M,N) \rightarrow M$ such that
$\pi(x^i,u^j,u^j_{\sigma})=x^i $. Analogously to the case of  finite jets spaces, we can define the operator $D^{\infty}(f)$  associating with any $f \in
\cinf(M,N)$ a smooth section $D^{\infty}(f)$ of the bundle $(\jinf(M,N),M,\pi)$ in the natural way.
Given  $f \in \cinf(M,N)$, we define
$$\mathcal{K}^f=\bigcup_{x \in M}(x,D^{\infty}(f)(x)).$$
We have that $\mathcal{K}^f$ is an $n$ dimensional submanifold of $\jinf(M,N)$ and $D_i \in T\mathcal{K}^f$. In fact, if $F \in \mathfrak{F}$, the vector fields $D_i$ satisfy
$$D_i[F](x,,f(x),\partial^{\sigma}(f)(x))=\partial_{x^i}\left[F(x,f(x),\partial^{\sigma}(f)(x))\right],$$
for any $f \in \cinf(M)$.
On the other hand, if $\mathcal{K}$ is an integral manifold of $\mathcal{C}$,  there exist a unique function $f^{\mathcal{K}} \in \cinf(M,N)$ such that
$\mathcal{K}^{f^{\mathcal{K}}}=\mathcal{K}$. In this way we can identify   any integral manifold of $\mathcal{C}$ with  a  smooth
function in $\cinf(M,N)$ or, equivalently, we can describe any smooth function as
 an integral manifold of $\mathcal{C}$ in $\jinf(M,N)$. \\

\begin{remark}\label{remark_evolution}
The previous considerations and the definitions of $D_i$ and  $D^{\infty}$ provide a natural interpretation for evolution vector fields.
In particular, if the function $f \in \cinf(M \times \mathbb{R}, N)$ solves an evolution equation of the form
$$\partial_t(f)(x,t)=F(x,f(x),\partial^{\sigma}(f)(x)),$$
it is easy  to prove that,  for any $G \in \mathfrak{F}$, we have
$$\partial^n_t\left[G(x,f(x),\partial^{\sigma}(f)(x))\right]=V_F^n[G](x,f(x),\partial^{\sigma}(f)(x)).$$
These properties will play an important role in the representation of SPDEs as ordinary SDEs on the infinite dimensional manifold $\jinf(M,N)$.
\end{remark}

In the following, in order to make the previous discussion more explicit, we rewrite the expressions of the principal objects introduced above in the particular case of $\jinf(\mathbb{R},\mathbb{R})$.
In the space $\jinf(\mathbb{R},\mathbb{R})$ we consider the coordinate system given by $x \in \mathbb{R}$ (the coordinate  on $M$), by $u \in \mathbb{R}$ (the coordinate  in $N$) and by all the (formal) derivatives of $u$ with respect to $x$ which are $u_{(1)},u_{(2)},u_{(3)},...$. Sometimes, in order to simplify the notation and clarify the meaning of the coordinate system $x,u,u_{(1)},...$ we write  $u_x=u_{(1)}$, $u_{xx}=u_{(2)}$,..., . \\
If $F \in  \mathfrak{F}$, then $F$ is a smooth function depending only on $x$, $u$ and the derivative $u_{(n)}$ for $n<k$, with $k$ an integer great enough. The vector field $D_1=D_x$ has the form
$$D_x=\partial_x+u_x\partial_{u}+u_{xx}\partial_{u_x}+...+u_{(n+1)}\partial_{u_{(n+1)}}+...$$
and represents the formal derivative with respect to $x$ in $\jinf(M,N)$, which means that, if $F(x,u,u_x,...) \in \mathfrak{F}$ and $f \in \cinf(\mathbb{R})$, then
$$D_x(F)(x,f(x),f'(x),...)=\partial_x(F(x,f(x),f'(x),...)).$$
In this case, the evolution vector field $V_F$ has the form
$$V_F=F\partial_u+D_x(F)\partial_{u_x}+...+D^n_x(F)\partial_{u_{(n)}}+...$$
In particular, if for example  $F=xu_x$, we have
$$V_F=xu_x\partial_u+(xu_{xx}+u_x)\partial_{u_x}+...+(xu_{(n+1)}+nu_{(n)})\partial_{u_{(n)}}+...$$
In this setting  there is a simple way to see finite dimensional canonical submanifolds of $\jinf(\mathbb{R},\mathbb{R})$ as ordinary differential equations of arbitrary order for the dependent variable $u$. Consider for example the subamanifold $\mathcal{K}^n$ in $J^n(\mathbb{R},\mathbb{R})$ defined as the set of zeros of the equation
\begin{equation}\label{equation_consequence1}
u_{(n)}-h(x,u,u_x,...,u_{(n-1)})=0,
\end{equation}
where $h \in \mathfrak{F}_{n-1}$. If we want  $\mathcal{K}^n$ to be the projection on $J^n(\mathbb{R},\mathbb{R})$ of some canonical submanifold $\mathcal{K}$ of $\jinf(\mathbb{R},\mathbb{R})$ we need that $D_x \in T\mathcal{K}$ and so
\begin{equation}\label{equation_consequence2}
\begin{array}{rcl}
0&=&D_x(u_{(n)}-h(x,...))=u_{(n+1)}-D_x(h)(x,...)\\
0&=&D^2_x(u_{(n)}-h(x,...))=u_{(n+2)}-D^2_x(h)(x,...)\\
&...&
\end{array}
\end{equation}
on $\mathcal{K}$. Equations \refeqn{equation_consequence2} are called differential consequences of equation \refeqn{equation_consequence1} and a finite dimensional canonical submanifold $\mathcal{K}$ is defined by equation \refeqn{equation_consequence1} and its differential consequences \refeqn{equation_consequence2}. It is possible to prove that the generic (with respect to a suitable topology) canonical submanifold of $\jinf(\mathbb{R},\mathbb{R})$ is of the form described above. In particular, in $\jinf(\mathbb{R},\mathbb{R})$, every canonical submanifold is finite dimensional (we remark that this is no more  true when $M$ is of dimension greater than one).

\subsection{Finite  dimensional  canonical submanifolds of $\jinf(M,N)$ and reduction functions}

In this section, generalizing the identification between integral manifolds of the contact distribution in $\jinf(M,N)$ and smooth functions,  we  prove that  any  $(m+r)$ dimensional canonical submanifold in $\jinf(M,N)$  can be identified  with a smooth function defined on $M \subset \mathbb{R}^m$ taking values in $N \subset \mathbb{R}^n$ and depending on $r$ parameters. \\
In fact, given  an $r$ dimensional smooth manifold $B$ and a smooth function
$$K:M \times B \rightarrow N,$$
which we call a \emph{finite dimensional function}, we can consider the function
$$\bold{K}:M \times B \rightarrow \jinf(M,N) $$
defined by
$$\bold{K}(x,b)=(x,D^{\infty}(K)(x,b))$$
and the subset
$$\mathcal{K}^K=\bigcup_{x \in M, b \in B} \bold{K}(x,b),$$
where the $D^{\infty}$ operator acts only on the $x^i$ variables of $K$.

\begin{theorem}\label{theorem_finite}
If $\mathcal{K}^K$ is a finite dimensional submanifold of $\jinf(M,N)$, then $\mathcal{K}^K$ is a finite dimensional canonical submanifold.
Conversely, if $\mathcal{K}$ is a finite dimensional canonical submanifold of $\jinf(M,N)$ then, suitably restricting  $M$ and
$\mathcal{K}$, there exists a finite dimensional function $K$ such that $\mathcal{K}^K=\mathcal{K}$.
\end{theorem}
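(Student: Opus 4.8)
The plan is to prove the two implications separately: the first is a slicewise version of the computation already used for $\mathcal{K}^f$ with $f\in\cinf(M,N)$, and the second is a Frobenius-type foliation argument. For the first implication, suppose $\mathcal{K}^K$ is a finite dimensional submanifold of $\jinf(M,N)$ and take $h\in\mathfrak{F}$ with $h|_{\mathcal{K}^K}=0$. Fixing $b\in B$ and writing $g_b:=K(\cdot,b)\in\cinf(M,N)$, the hypothesis reads $h(x,D^\infty(g_b)(x))=0$ for all $x$, since $D^\infty$ acts only on the $x$-variables. Differentiating in $x^i$ and using the defining chain-rule property $D_i[h](x,g_b(x),\partial_\sigma g_b(x))=\partial_{x^i}[h(x,g_b(x),\partial_\sigma g_b(x))]$, one gets $D_i[h]=0$ at $\bold{K}(x,b)$, hence on all of $\mathcal{K}^K$; therefore $\mathcal{C}\subset T\mathcal{K}^K$ and $\mathcal{K}^K$, being finite dimensional by hypothesis, is a finite dimensional canonical submanifold.

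For the converse, fix $p_0\in\mathcal{K}$, put $x_0=\pi(p_0)$, and let $m+r$ be the dimension of $\mathcal{K}$ near $p_0$. Since $\mathcal{K}$ is canonical the $D_i$ are tangent to it, and since $d\pi(D_i)=\partial_{x^i}$ with the $\partial_{x^i}$ pointwise independent, $\pi|_{\mathcal{K}}$ is a submersion near $p_0$ (so $r\ge 0$). I would then pick $k>H_{p_0}$ large enough that $\pi_k$ restricts to a diffeomorphism of a neighborhood of $p_0$ in $\mathcal{K}$ onto a neighborhood of $\pi_k(p_0)$ in the finite dimensional submanifold $\pi_k(\mathcal{K})\subset J^k(M,N)$ — possible for all large $k$ since both are $(m+r)$-dimensional — and transport $D_1,\dots,D_m$ through this diffeomorphism to $m$ vector fields on $\pi_k(\mathcal{K})$; these span an $m$-dimensional distribution which is involutive because $[D_i,D_j]=0$. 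The classical Frobenius theorem then foliates $\pi_k(\mathcal{K})$, hence $\mathcal{K}$, by $m$-dimensional integral leaves, smoothly parametrized by an $r$-dimensional transversal $B$ which I take to be a small submanifold of $\pi^{-1}(x_0)\cap\mathcal{K}$ through $p_0$; after one further shrinking I arrange that every leaf projects under $\pi$ diffeomorphically onto a single neighborhood of $x_0$, which I rename $M$.

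Each leaf $L_b$ (for $b\in B$) is a finite dimensional integral manifold of the Cartan distribution $\mathcal{C}$, so by the identification recalled just before Remark \ref{remark_evolution} there is a unique $f_b\in\cinf(M,N)$ with $L_b=\mathcal{K}^{f_b}$; I set $K(x,b):=f_b(x)$, i.e. $K(x,b)$ is the $u$-component of the unique point of $L_b$ over $x$. Joint smoothness of $K$ in $(x,b)$ follows from the smooth dependence of the Frobenius leaves on the transversal parameter (equivalently, of the flows of $D_1,\dots,D_m$ on their initial data). For fixed $b$, the point $\bold{K}(x,b)=(x,D^\infty(K(\cdot,b))(x))=(x,D^\infty(f_b)(x))$ is exactly the point of $\mathcal{K}^{f_b}=L_b$ over $x$, since on an integral manifold of $\mathcal{C}$ the fiber coordinates $u^j_\sigma$ coincide with the iterated derivatives $\partial_\sigma$ of the $u^j$ coordinates; hence $\mathcal{K}^K=\bigcup_{x\in M,\,b\in B}\bold{K}(x,b)=\bigcup_{b\in B}L_b=\mathcal{K}$ on the chosen neighborhood, as required.

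The step I expect to be the main obstacle is precisely this Frobenius/foliation argument together with the joint smoothness of $K$: it must be executed at a finite jet order $J^k$, where $\pi_k(\mathcal{K})$ is a genuine finite dimensional manifold and the classical Frobenius theorem and ODE smooth-dependence results are available, and one must then check that the reconstruction is compatible with the tower of projections $\pi_{k+1,k}$ — this being automatic here exactly because $\mathcal{K}$ is canonical, so the higher jet coordinates of a point of $\mathcal{K}$ are the forced differential consequences of the lower ones. A secondary technical point is bookkeeping: the successive restrictions of $M$, of $\mathcal{K}$ and of the transversal $B$ must be carried out coherently so that in the end $K$ is a genuine smooth function on a product $M\times B$, which is what \virgolette{suitably restricting $M$ and $\mathcal{K}$} refers to.
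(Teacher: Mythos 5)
Your proposal is correct and follows essentially the same route as the paper: the first implication is the same slicewise tangency computation, and for the converse the paper also passes to a finite jet level $J^k$, writes $D_i|_{\mathcal{K}}=\partial_{x^i}+\psi^k_i\partial_{y^k}$ in adapted coordinates and integrates the resulting overdetermined system using $[D_i,D_j]=0$ — which is exactly the commuting-vector-fields form of the Frobenius/flow argument you invoke — before defining $K$ as the $u$-coordinate along the leaves. The only difference is presentational (explicit integration with smooth dependence on the initial data $y_0$ versus citing Frobenius and smooth dependence of leaves), so no gap.
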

\begin{proof}
The fact that, for any smooth finite dimensional function $K$, if $\mathcal{K}^K$ is a finite dimensional submanifold of $\jinf(M,N)$, then $\mathcal{K}^K$ is a finite dimensional canonical manifold follows from the fact that, for any fixed $b$, $D_i \in \bigcup_{x \in M} \bold{K}(x,b)$ since $\bold{K}(x,b)=(x,D^{\infty}(K)(x,b))$.\\
Conversely, let $\mathcal{K}$ be a finite dimensional canonical submanifold of $\jinf(M,N)$. By definition of submanifold of $\jinf(M,N)$, possibly restricting $M$, we can describe $\mathcal{K}$ as the set of zeros of some functions of the form
$$u^i_{\sigma}-f^i_{\sigma}(x^1,...,x^m,y^1,...,y^r),$$
where $y^1,...,y^r \in \mathfrak{F}_k$ and $f^i_{\sigma}:\mathbb{R}^{m+r} \rightarrow \mathbb{R}$ are smooth functions. Thanks to the previous property we can work in the finite dimensional manifold $J^k(M,N)$, rather than in the infinite dimensional $\jinf(M,N)$. If we choose an adapted coordinate system $x^1,...,x^n,y^1,...,y^r$  in $\mathcal{K}$,  the vector fields $D_i$ restricted to $\mathcal{K}$ will be of the form
$$D_i=\partial_{x^i}+\psi_i^k(y^1,...,y^r)\partial_{y^k},$$
for some functions $\psi_i^k$ (since $[D_i,D_j]=0$ the functions $\psi^k$ do not depend on $x^1,...,x^n$). Fixing $x_0 \in M$, there is only one solution to the following system of overdetermined PDEs
\begin{eqnarray*}
\partial_{x^i}(Y^k)(x,y^1_0,...,y^r_0)&=&\psi_i^k(x,y^1_0,...,y^r_0)\\
Y^k(x_0,y^1_0,...,y^r_0)&=&y^k_0,
\end{eqnarray*}
for $(y^1_0,...,y^r_0)$  in a suitable open subset of $\mathbb{R}^r$. Hence, if we restrict $M$ to a suitable neighborhood of $x_0$, any integral submanifold of $\mathcal{C}|_{\mathcal{K}}$ is of the form
$$\bigcup_{x \in M} (x,Y^1(x,y^1_0,...,y^r_0),...,Y^r(x,y^1_0,...,y^r_0)),$$
for some $y^1_0,...,y^r_0$. Since $x^i,y^j$ form a coordinate system on $\mathcal{K}$, the coordinate $u^i \in \mathfrak{F}$ restricted to $\mathcal{K}$ is of the form
$$u^j=\mathfrak{U}^j(x^1,...,x^n,y^1,...,y^r).$$
This means that the finite dimensional canonical manifold $\mathcal{K}$ is the canonical manifold generated by the function $K \in \cinf(M \times B,N)$ defined by
$$K^j(x,y^1_0,...,y^r_0)=\mathfrak{U}^j(x,Y^1(x,y^1_0,...,y^r_0,...,Y^r(x,y^1_0,...,y^r_0)).$$
${}\hfill$\end{proof}\\

\noindent The proof of Theorem \ref{theorem_finite} provides a constructive method to obtain the finite dimensional function associated with a finite dimensional canonical submanifold $\mathcal{K}$. This method is very simple in the case of $\jinf(\mathbb{R},\mathbb{R})$, and in the following we  give  the idea of the construction in an explicit case. Given $\lambda \in \mathbb{R}$, let $\mathcal{K}$  be defined by
$$u_{xx}-\lambda u=0,$$
and all its differential consequences. This means that $\mathcal{K}$ is defined by the equations
\begin{eqnarray*}
u_{(2n)}-\lambda^n u&=&0\\
u_{(2n+1)}-\lambda^n u_x&=&0.
\end{eqnarray*}
If we choose on $\mathcal{K}$ the coordinate system  $(x,u,u_x)$, the vector field $D_x$ restricted to $\mathcal{K}$ is given by
$$D_x=\partial_x+u_x\partial_u+\lambda u \partial_{u_x}.$$
In order to construct the function $K$ generating $\mathcal{K}$ we need to solve the differential equations
\begin{eqnarray*}
\partial_x(U(x,u_0,u_{x,0}))&=&U_x(x,u_0,u_{x,0})\\
\partial_x(U_x(x,u_0,u_{x,0}))&=& \lambda U(x,u_0,u_{x,0})\\
U(0,u_0,u_{x,0})&=&u_0\\
U_x(0,u_0,u_{x,0})&=&u_{x,0}.
\end{eqnarray*}
If, for example,  $\lambda > 0$, the solution to the previous system is
\begin{eqnarray*}
U(x,u_0,u_{x,0})&=&\frac{\sqrt{\lambda}u_0+u_{x,0}}{2\sqrt{\lambda}}e^{\sqrt{\lambda}x}+\frac{\sqrt{\lambda}u_0-u_{x,0}}{2\sqrt{\lambda}}e^{-\sqrt{\lambda}x}\\
U_x(x,u_0,u_{x,0})&=&\frac{\sqrt{\lambda}u_0+u_{x,0}}{2}e^{\sqrt{\lambda}x}+\frac{-\sqrt{\lambda}u_0+u_{x,0}}{2}e^{-\sqrt{\lambda}x}.
\end{eqnarray*}
Since in this case $\mathfrak{U}=u$, the finite dimensional function $K$ generating $\mathcal{K}$ is exactly
$$K(x,u_0,u_{x,0})=U(x,u_0,u_{x,0})=\frac{\sqrt{\lambda}u_0+u_{x,0}}{2\sqrt{\lambda}}e^{\sqrt{\lambda}x}+\frac{\sqrt{\lambda}u_0-u_{x,0}}{2\sqrt{\lambda}}e^{-\sqrt{\lambda}x}.$$
We remark  that, in the case $M \subset \mathbb{R}$, constructing  $K$  is equivalent to finding the solution to the ODE $u_{(n)}-h(x,...)=0$ defining, together with all its differential consequences, the manifold  $\mathcal{K}$ and, conversely, the finite dimensional canonical submanifold $\mathcal{K}$ associated with $K$ is the unique ODE for which $K$ is the fundamental solution.

\section{Finite dimensional solutions to SPDEs and finite dimensional canonical manifolds}\label{section_finite}

\subsection{SPDEs and the geometry of $\jinf(M,N)$}

Given a probability space $(\Omega,\mathcal{F},\mathbb{P})$ with a filtration of subsigmalgebras $\mathcal{F}_t \subset \mathcal{F}$,  in the following we consider only  (local) martingales which are (local) martingales with respect to the filtration $\mathcal{F}_t$.  In this setting
the definition of semimartingale with a spatial parameter proposed in
\cite{Kunita1990} can be modified as follows.

\begin{definition}\label{definition_semimartingale}
Let $(t,x,\omega) \mapsto U_{t}(x)(\omega) \in N$ be a random variable. We say that $U_t$ is a semimartingale dependent on the parameter $x \in
M$ of regularity $h$ (in short $U_t(x)$ is a $C^h$ semimartingale) if, for any $t \in \mathbb{R}_+, \omega \in \Omega$, the function $U_t(\cdot)(\omega) \in C^h(M,N)$ and, for any $x \in M$
and multi-index $|\sigma| \leq h$, the process $\partial^{\sigma}(U_t)(x)$ is an $N$ valued semimartingale. If $U_t$ is a $C^h$ semimartingale for any $h >0$, we say that $U_t$ is a $\cinf$ semimartingale.
\end{definition}

\begin{remark}
If $U_t$ is a $C^h$ semimartingale, then $\partial^{\sigma}(U_t)$ is a $C^{h-|\sigma|}$ semimartingale for any multi-index $|\sigma| \leq h$.
\end{remark}

\begin{definition}\label{definition_SPDE}
Given $F_1,...,F_r \in \mathfrak{F}^n_h$ and $r$ real semimartingales $S^1,...,S^r$, we  say that the $C^h$ semimartingale $U_t=(U^i_t,...,U^n_t)$ is a solution to the SPDE
\begin{equation}\label{equation_definitionSPDE1}
dU_t=F_{\alpha}(U_t) \circ dS^{\alpha}_t,
\end{equation}
or simply to the SPDE associated with $(F_1,...,F_r)$ and $(S_1,...,S_r)$ if and only if, for any $x \in N$,
\begin{equation}\label{equation_definitionSPDE2}
U^i_t(x)-U^i_0(x)=\int_0^t{F_{\alpha}(x,U_s(x),...,\partial^{\sigma}(U_s)(x)) \circ dS^{\alpha}_s}.
\end{equation}
\end{definition}

\begin{remark}\label{remark_SPDE1}
We can extend Definition \ref{definition_SPDE} to more general SPDEs. For example consider a functional $\Psi_{\alpha}:\cinf(M,N) \rightarrow
\mathbb{R}$. If we suppose that $\Psi(f)$ depends only on the values of the function $f$ in some compact subset $\mathfrak{K} \subset M$
and that $\Psi$ is smooth with respect to the norm of $C^{l}(K,N)$ (for some $l \geq 0$) it is easy to prove, using the It\^o formula for Hilbert space
valued semimartingale, that $\Phi_{\alpha}(U_t)$ is a real semimartingale. In this setting we can modify equation
\refeqn{equation_definitionSPDE2} in the following way
\begin{equation}\label{equation_nonlocal}
U^i_t(x)-U^i_0(x)=\int_0^t{\Psi_{\alpha}(U_s)F_{\alpha}(x,U_s(x),...,\partial^{\sigma}(U_s)(x)) \circ dS^{\alpha}_s}
\end{equation}
and we write
$$dU_t=\Psi_{\alpha}(U_t)F_{\alpha}(U_t) \circ dS^{\alpha}_t.$$
We call the SPDEs of the form \refeqn{equation_nonlocal} weakly local SPDEs.
\end{remark}

In order to reformulate Definition \ref{definition_SPDE} in terms of a standard SDE in $\jinf(M,N)$, we consider $U_t$ such that
 $U_t$ is a $C^{\infty}$ semimartingale for any $x \in N$  and we define a semimartingale $\bold{U}_t(x)$ taking values in $\jinf_{x}$ (the fiber of $x \in M$ in the bundle $\jinf(M,N)$) in the following way
$$u^i_{\sigma}(\bold{U}_t(x))=\partial^{\sigma}(U^i_t)(x).$$
Obviously  $\bold{U}$ is a semimartingale taking values in $\jinf(M,N)$ for any $x \in M$, i.e.  $f(x,\bold{U}_t(x))$ is a real semimartingale  for any $f \in \mathcal{F}$. Furthermore, fixing $t\in \mathbb{R}_+$ and $\omega \in \Omega$, the section $x \mapsto \bold{U}_t(x)(\omega)$ is an integral section of the Cartan distribution $\mathcal{C}$, so that
$$D_{x^i} \in T\mathcal{K}^{\bold{U}_t(\omega)}$$
for any $i=1,...,m$. The process $\bold{U}_t$ dependent on the parameter $x \in M$ is called the lifting of $U_t$ to $\jinf(M,N)$.\\
Conversely, if $\bold{P}_t(x)$ is a process dependent on $x \in M$ and taking values in $\jinf_x$ which is a semimartingale in $J(M,N)$ and, for $t \in \mathbb{R}_+$ and $\omega \in \Omega$,  $\bold{P}_t(\omega)$ is an integral manifold of the Cartan distribution, then there exists a function $U^{\bold{P}}$ such that the lifting $\bold{U}^{\bold{P}}$ of $U^{\bold{P}}$ to $\jinf(M,N)$ is exactly $\bold{P}$. This assertion can be proved using Theorem \ref{theorem_finite} and the fact that $U^{\bold{P},i}_t(x)=u^i \circ \bold{P}_t(x)$. \\

If $\mathcal{M}$ is a smooth manifold  and $Y_1,...,Y_r$ are $r$ vector fields on $\mathcal{M}$, the semimartingale $X$ on $\mathcal{M}$ is a solution to the SDE associated with $Y_1,...,Y_r$ and the semimartingales $S^1,...,S^r$ if and only if, for any $f \in \cinf(\mathcal{M})$,
$$f(X_t)-f(X_0)=\int_0^t{Y_{\alpha}(f)(X_s)\circ dS^{\alpha}_s}.$$
In the following, if $X$ is a solution to the SDE associated with $Y_1,...,Y_r$ and $S^1,...,S^r$, we write
$$dX_t=Y_{\alpha} \circ dS^{\alpha}_t.$$

\begin{theorem}\label{theorem_SPDE1}
The $C^{\infty}$ semimartingale $U$ is a solution to the SPDE associated with $F_1,...,F_r$ and $S^1,...,S^r$ if and only if, for any $x \in M$,
$\bold{U}_t(x)$ is a solution to the SDE associated with $V_{F_1},...,V_{F_r}$ and $S^{1},...,S^r$, i.e., for any $f \in \mathcal{F}$,
\begin{equation}\label{equation_SPDE1}
f(x,\bold{U}_t(x))-f(x,\bold{U}_0(x))=\int_0^t{V_{F_{\alpha}}(f)(x,\bold{U}_s(x))\circ dS^{\alpha}}.
\end{equation}
\end{theorem}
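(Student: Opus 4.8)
The plan is to prove the two implications separately, using the defining characterization of $V_{F_\alpha}$ as the evolution vector field generated by $F_\alpha$, together with the identity $V_{F_\alpha}^n[G](x,f(x),\partial^\sigma(f)(x)) = \partial_t^n[G(x,f(x),\partial^\sigma(f)(x))]$ from Remark \ref{remark_evolution}, but in the stochastic setting rather than the deterministic evolution-equation one. The key point is that for $G = u^i$ one has $V_{F_\alpha}(u^i) = F^i_\alpha$, and more generally $V_{F_\alpha}(u^i_\sigma) = D^\sigma(F^i_\alpha)$, so that \refeqn{equation_SPDE1} applied to the coordinate functions $f = u^i_\sigma$ recovers exactly \refeqn{equation_definitionSPDE2} and its spatial derivatives.

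First I would assume $U$ solves the SPDE associated with $F_1,\dots,F_r$, so that \refeqn{equation_definitionSPDE2} holds for every $x$. Since $U_t$ is a $C^\infty$ semimartingale, I can differentiate \refeqn{equation_definitionSPDE2} with respect to the spatial variables $x$ under the stochastic integral sign — this is where Kunita's theory of semimartingales depending smoothly on a parameter (from \cite{Kunita1990}) is needed, to justify that $\partial^\sigma$ commutes with $\int_0^t \cdot \circ dS^\alpha_s$. This yields, for every multi-index $\sigma$,
$$
\partial^\sigma(U^i_t)(x) - \partial^\sigma(U^i_0)(x) = \int_0^t D^\sigma(F^i_\alpha)(x,U_s(x),\dots)\circ dS^\alpha_s,
$$
using that $\partial^\sigma$ of $F^i_\alpha(x,U_s(x),\partial^\tau(U_s)(x))$ equals $D^\sigma(F^i_\alpha)$ evaluated along the jet of $U_s$, exactly the chain-rule fact recalled for $D_i$ in Section \ref{prel}. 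In terms of the lifting $\bold{U}_t(x)$, whose coordinates are $u^i_\sigma(\bold{U}_t(x)) = \partial^\sigma(U^i_t)(x)$, this says precisely that \refeqn{equation_SPDE1} holds for all the coordinate functions $f = u^i_\sigma$ (and trivially for $f = x^i$). Since any $f \in \mathcal{F}$ depends on only finitely many coordinates, the general case of \refeqn{equation_SPDE1} then follows by the Stratonovich chain rule (the Stratonovich differential obeys the ordinary Leibniz/chain rules, so $df(x,\bold{U}_t(x)) = \sum_\sigma \partial_{u^i_\sigma} f \cdot d(u^i_\sigma(\bold{U}_t(x)))$), combined with the algebraic identity $V_{F_\alpha}(f) = (\partial_{u^i_\sigma} f)\, D^\sigma(F^i_\alpha)$ which is just the definition of the evolution vector field $V_{F_\alpha}$ expanded in coordinates.

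For the converse, I would assume \refeqn{equation_SPDE1} holds for every $f \in \mathcal{F}$ and every $x$, and simply specialize to $f = u^i$: since $V_{F_\alpha}(u^i) = F^i_\alpha$ and $u^i(\bold{U}_t(x)) = U^i_t(x)$, \refeqn{equation_SPDE1} becomes exactly \refeqn{equation_definitionSPDE2}, so $U$ solves the SPDE. One should also check for consistency that the process $\bold{U}_t$ defined from $U_t$ is indeed the lifting, i.e.\ that its higher coordinates $u^i_\sigma(\bold{U}_t(x))$ really are $\partial^\sigma(U^i_t)(x)$; but this holds by construction of $\bold{U}$ together with the already-noted fact that $x \mapsto \bold{U}_t(x)(\omega)$ is an integral section of $\mathcal{C}$, forcing $u^i_\sigma(\bold{U}_t(x)) = D^\sigma(u^i)(\bold{U}_t(x)) = \partial^\sigma(U^i_t)(x)$.

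The main obstacle is the interchange of spatial differentiation $\partial^\sigma$ with the Stratonovich integral $\int_0^t \cdot \circ dS^\alpha_s$ in the forward direction — that is, verifying that the spatial derivatives of the integrated equation \refeqn{equation_definitionSPDE2} are again Stratonovich integrals with the differentiated integrand. This is exactly the content of Kunita's framework for parameter-dependent semimartingales and requires the hypothesis that $U_t$ is a genuine $C^\infty$ semimartingale (so that all the $\partial^\sigma(U_t)$ are semimartingales and the quadratic-variation terms entering the Stratonovich correction also depend smoothly on $x$); I would cite \cite{Kunita1990} for this and not reprove it. Everything else is bookkeeping with the coordinate expansion of $V_{F_\alpha}$ and the fact that membership of $f$ in $\mathcal{F}$ means dependence on finitely many coordinates, so the chain rule is a finite sum.
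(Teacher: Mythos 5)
Your proposal is correct and follows essentially the same route as the paper: the direction from the jet-space SDE to the SPDE is obtained by specializing to $f=u^i$, and the other direction uses Kunita's interchange of spatial derivatives with the Stratonovich integral (the paper's Lemma \ref{lemma_SPDE1}), the chain-rule identity $\partial^{\sigma}\bigl(F_{\alpha}(x,U_t(x),\dots)\bigr)=D^{\sigma}(F_{\alpha})(x,U_t(x),\dots)$, and finally the It\^o/Stratonovich formula at fixed $x$ applied to an arbitrary $f\in\mathcal{F}$ depending on finitely many coordinates. No gaps; the argument matches the paper's proof step for step.
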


In order to prove Theorem \ref{theorem_SPDE1} we recall the following lemma.

\begin{lemma}\label{lemma_SPDE1}
If $G_t(x)$ is a $C^h$ semimartingale and $S$ is a real valued semimartingale, then
$$\int_0^{t}{G_s(x) \circ dS_s},$$
is a $C^{h-1}(M,N)$ semimartingale and, for any multi-index $|\sigma| < h$,
$$\partial^{\sigma}\left(\int_0^{t}{G_s(x) \circ dS_s} \right)=\int_0^t{\partial^{\sigma}(G_s)(x) \circ dS_s}.$$
\end{lemma}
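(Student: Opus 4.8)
The plan is to deduce the statement from Kunita's theory of stochastic integration with a parameter (see \cite{Kunita1990}), and I sketch the main steps. First I would reduce to a single first-order spatial derivative: it suffices to show that if $G_t(x)$ is a $C^h$ semimartingale (Definition \ref{definition_semimartingale}) with $h\ge 1$, then $x\mapsto\int_0^t G_s(x)\circ dS_s$ is $C^1$ and $\partial_i\int_0^t G_s(x)\circ dS_s=\int_0^t\partial_i G_s(x)\circ dS_s$; the general case then follows by induction on $|\sigma|$, applying this to the $C^{h-|\sigma|}$ semimartingale $\partial^\sigma G_t(x)$, the base case $|\sigma|=0$ being joint continuity in $x$ and each inductive step consuming one order of regularity — which is exactly why one lands in $C^{h-1}$. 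Next I would decompose $S=S_0+M+A$ into its continuous local martingale and finite-variation parts and, using continuity of $S$, write
\[\int_0^t G_s(x)\circ dS_s=\int_0^t G_s(x)\,dM_s+\int_0^t G_s(x)\,dA_s+\tfrac12\langle G(\cdot)(x),M\rangle_t,\]
treating the three terms separately. The $dA_s$-term is a pathwise Lebesgue--Stieltjes integral, and the interchange with $\partial_i$ is classical differentiation under the integral sign, using continuity of $\partial_i G$ on compacts and dominated convergence; it costs no regularity.

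For the martingale term I would localize by stopping times so that $M$ becomes a bounded $L^2$ martingale and the $\partial^\sigma G_s(x)$ with $|\sigma|\le h$ are bounded uniformly on $[0,T]\times K$ for a fixed horizon $T$ and compact $K\subset M$; the stopped identities are then patched in the usual way. Fixing $x$ in the interior of $K$ and setting $J^i_t(x)=\int_0^t\partial_i G_s(x)\,dM_s$ (well defined since $\partial_i G_s(x)$ is a semimartingale for each $x$), the identity $G_s(x+\varepsilon e_i)-G_s(x)=\varepsilon\int_0^1\partial_i G_s(x+\theta\varepsilon e_i)\,d\theta$ together with It\^o's isometry and Doob's inequality gives
\[\mathbb{E}\Big[\sup_{t\le T}\Big|\tfrac1\varepsilon\Big(\int_0^t G_s(x+\varepsilon e_i)\,dM_s-\int_0^t G_s(x)\,dM_s\Big)-J^i_t(x)\Big|^2\Big]\le C\,\mathbb{E}\int_0^T\Big(\int_0^1\big(\partial_i G_s(x+\theta\varepsilon e_i)-\partial_i G_s(x)\big)d\theta\Big)^2 d\langle M\rangle_s,\]
and the right-hand side vanishes as $\varepsilon\to0$ by continuity of $\partial_i G$ in $x$ and dominated convergence. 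This yields differentiability in probability; to obtain an honest $C^1$ modification I would then invoke the Kolmogorov continuity criterion for the random fields $x\mapsto J^i_t(x)$ and $x\mapsto$ (difference quotient), the needed moment bounds on spatial increments coming from the Burkholder--Davis--Gundy inequality combined with the mean-value estimate $|\partial_i G_s(x)-\partial_i G_s(y)|\le|x-y|\sup_\zeta|\nabla\partial_i G_s(\zeta)|$ — this is precisely where the second spatial derivative of $G$ enters, and where the loss of one derivative originates.

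Finally, for the covariation term I would write $\langle G(\cdot)(x),M\rangle_t$ as the a.s.\ limit, along a refining sequence of partitions, of $\sum_j(G_{t_{j+1}\wedge t}(x)-G_{t_j\wedge t}(x))(M_{t_{j+1}\wedge t}-M_{t_j\wedge t})$, differentiate termwise and pass to the limit with the same Burkholder--Davis--Gundy and Kunita--Watanabe estimates as before, obtaining $\partial^\sigma\langle G(\cdot)(x),M\rangle_t=\langle\partial^\sigma G(\cdot)(x),M\rangle_t$. Reassembling the three pieces gives $\partial^\sigma\int_0^t G_s(x)\circ dS_s=\int_0^t\partial^\sigma G_s(x)\circ dS_s$; in particular the right-hand side is a Stratonovich integral of the semimartingale $\partial^\sigma G(\cdot)(x)$ against $S$, hence a semimartingale, so $\int_0^t G_s(x)\circ dS_s$ is a $C^{h-1}$ semimartingale as claimed. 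I expect the main obstacle to be the martingale step: pushing the limit through the stochastic integral with enough uniformity in the parameter $x$ to produce a jointly measurable, spatially continuous derivative process, which is what forces both the localization by stopping times and the Kolmogorov continuity argument, and what is responsible — via the mean-value bound — for the passage from $C^h$ to $C^{h-1}$.
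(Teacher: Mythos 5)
Your route necessarily differs from the paper's, because the paper does not actually prove this lemma: its proof is the single line ``The proof is given in \cite{Kunita1990} Exercise 3.1.6.'' What you wrote is in effect a reconstruction of the solution to that exercise, and it follows the standard Kunita machinery: reduction to one first-order derivative plus induction, Stratonovich-to-It\^o decomposition $S=S_0+M+A$, classical differentiation under the integral sign for the finite-variation part, localization plus It\^o isometry/Doob (or Burkholder--Davis--Gundy) for the martingale part, and a Kolmogorov continuity argument to upgrade pointwise $L^2$-differentiability to a genuine $C^1$ modification. Your bookkeeping of where the mean-value bound consumes one extra derivative correctly accounts for the drop from $C^h$ to $C^{h-1}$, which is exactly the loss built into the statement. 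So the approach is sound and is presumably the intended one; what it buys over the paper is an actual argument in place of a pointer to the literature.

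Two steps deserve firming up. First, your localization (stopping when $\sup_{x\in K}|\partial^\sigma G_s(x)|$ becomes large) presupposes joint continuity, or at least joint measurability with a.s.\ locally bounded sups, of $(s,x)\mapsto\partial^\sigma G_s(x)$; Definition \ref{definition_semimartingale} literally asks only for $x$-regularity at fixed $t$ and semimartingality in $t$ at fixed $x$, so you should either invoke the jointly continuous modification (as in Kunita's notion of a continuous $C^h$-semimartingale) or construct it before stopping. Second, for the bracket term, ``differentiate the partition sums termwise and pass to the limit'' is not by itself a proof: a.s.\ convergence of a sequence together with pointwise convergence of its $x$-derivatives does not yield differentiability of the limit unless the derivative sums converge locally uniformly in $x$. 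The cleaner argument, consistent with the tools you name, works on the difference-quotient field directly: with $H^\varepsilon_s=\int_0^1\bigl(\partial_iG_s(x+\theta\varepsilon e_i)-\partial_iG_s(x)\bigr)d\theta$ one has $\tfrac1\varepsilon\bigl(\langle G(\cdot)(x+\varepsilon e_i),M\rangle_t-\langle G(\cdot)(x),M\rangle_t\bigr)-\langle\partial_iG(\cdot)(x),M\rangle_t=\langle H^\varepsilon,M\rangle_t$, which is killed by the Kunita--Watanabe inequality once $\langle H^\varepsilon\rangle_t\to0$; controlling that bracket again costs one more spatial derivative and requires locally uniform bounds on the brackets of the derivative fields (Kunita's local characteristics), so this is a second, independent place where the passage from $C^h$ to $C^{h-1}$ is forced and where the hypotheses must be used with care.
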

\begin{proof}
The proof is given in \cite{Kunita1990} Exercise 3.1.6.
${}\hfill$\end{proof}\\

\noindent \begin{proof}[Proof of Theorem \ref{theorem_SPDE1}]
If $\bold{U}_t(x)$ is the solution to the SDE associated with $V_{F_1},...,V_{F_r}$ and $S^1,...,S^r$, then $U_t(x)=u \circ \bold{U}_t(x)$ is a
solution to the SPDE \refeqn{equation_definitionSPDE1}, since equation \refeqn{equation_SPDE1} becomes equation
\refeqn{equation_definitionSPDE2} if we choose $f=u^i$. \\
Conversely, if $U_t(x)$ is a solution to the SPDE \refeqn{equation_definitionSPDE1}, we have
\begin{eqnarray*}
\partial_{x^i}(F_{\alpha}(x,U_t(x),U_{\sigma,t}(x)))&=&
\partial_x^i(F_{\alpha})(x,U_t(x),U_{\sigma,t}(x))+\sum_{\sigma}\partial^{\sigma+1_i}(U_t)(x)\partial_{u^i_{\sigma}}(F_{\alpha})(x,U_t(x),U_{\sigma,t}(x))\\
&=&D_{x^i}(F_{\alpha})(x,U_t(x),U_{\sigma,t}(x)).
\end{eqnarray*}
By induction it is possible to prove
$$\partial^{\sigma}(F_{\alpha}(x,U_t(x),U_{\sigma,t}(x)))=D^{\sigma}(F_{\alpha})(x,U_t(x),U_{\sigma,t}(x))$$
and by Lemma \ref{lemma_SPDE1} we find
$$\partial^{\sigma}(U_t)(x)=\int_0^t{D^{\sigma}(F_{\alpha})(x,U_{s}(x),U_{\sigma,s}(x))\circ dS^{\alpha}_t}.$$
Using the It\^o formula for $x$ fixed and the previous equation we obtain the thesis.
${}\hfill$\end{proof}

\begin{remark}\label{remark_SPDE2}
Theorem \ref{theorem_SPDE1} can be extended to the case of more general SPDEs as described in Remark \ref{remark_SPDE1}. In this case
 the SDE solved by $\bold{U}_t(x)$ is
$$f(x,\bold{U}_t(x))-f(x,\bold{U}_0(x))=\int_0^t{\Psi_{\alpha}(U_s)V_{F_{\alpha}}(f)(x,\bold{U}_s(x))\circ dS^{\alpha}_s}.$$
This SDE  depends not only on $\bold{U}_t(x)$ but also on all the functions $U_t$, since the functional
$\Psi_{\alpha}$ is, in  general,  non-local.
\end{remark}

In the following we discuss the relationship between the notion  of solution to an SPDE  introduced by Definition  \ref{definition_SPDE} and
the usual definition given in terms of the theory of martingales taking values in Hilbert (or Banach) spaces (see, e.g. \cite{DaPrato1992}).\\

We start by considering the It\^o  reformulation of equation \refeqn{equation_definitionSPDE2},  which is simpler to use in the Hilbert space setting. By Theorem \ref{theorem_SPDE1} and using the relationship between Stratonovich and It\^o integral, we have that $U_t$ solves the SPDE associated with $F_1,...,F_r$ and $S^1,...,S^r$ in the sense of Definition \ref{definition_SPDE} if and only if
\begin{eqnarray*}
U^i_t(x)-U^i_0(x)&=&\int_0^t{F^i_{\alpha}(x,U_s(x),...,\partial^{\sigma}(U_s)(x))dS^{\alpha}_s}+\\
&&+\frac{1}{2}\int_0^t{V_{F_{\alpha}}(F^i_{\beta})(x,U_t(x),...,\partial^{\tau}(U_t)(x))d[S^{\alpha},S^{\beta}]}.
\end{eqnarray*}

\begin{definition}\label{definition_strong_weak1}
Let $H$ be an Hilbert space containing some subset of smooth functions defined on $M$. If $U^i_t(x) \in H$, we say that $U^1,...,U^n$ is a strong solution to the SPDE associated with
$F_1,...,F_r$ and $S^1,...,S^r$ if $F^i_{\alpha}(x,U_t,...,\partial^{\sigma}(U_t))$ and
$V_{F_{\alpha}}(F^i_{\beta})(x,U_t,...,\partial^{\tau}(U_t))$ are locally bounded processes in $H$ and
\begin{equation}\label{equation_strong}
U^i_t-U^i_0=\int_0^t{F^i_{\alpha}(x,U_s,...,\partial^{\sigma}(U_s)) dS^{\alpha}_s}+\frac{1}{2}\int_0^t{V_{F_{\alpha}}(F^i_{\beta})(x,U_t,...,\partial^{\tau}(U_t))d[S^{\alpha},S^{\beta}]_s},
\end{equation}
where the integrals are usual It\^o integrals in $H$. We say that $U_t$ is a weak solution to the SPDE associated with $F_1,...,F_r$ and
$S^1,...,S^r$ if, for any $\xi \in V \subset H$, where $V$ is a suitable subspace of $H$ which separates the points of $H$, $\langle \xi ,
F^i_{\alpha}(x,U_t,...,\partial^{\sigma}(U_t)) \rangle$ and $\langle \xi ,
V_{F_{\beta}}(F^i_{\alpha})(x,U_t,...,\partial^{\tau}(U_t)) \rangle$ are  real locally bounded processes and the following equality holds
\begin{equation}\label{equation_weak}
\langle \xi , U^i_t-U^i_0 \rangle = \int_0^t{\langle \xi, F^i_{\alpha}(x,U_s,...,\partial^{\sigma}(U_s)) \rangle dS^{\alpha}_s}+\frac{1}{2}\int_0^t{\langle \xi, V_{F_{\alpha}}(F^i_{\beta})(x,U_t,...,\partial^{\tau}(U_t)) \rangle d[S^{\alpha},S^{\beta}]_s},
\end{equation}
(here the integrals are usual $\mathbb{R}$ It\^o integrals).
\end{definition}

In general, it is not easy to find the relationship between the two notions of solution proposed in Definition \ref{definition_SPDE} and in Definition \ref{definition_strong_weak1}. For this reason we need to introduce an additional hypothesis (which is satisfied by the usual Hilbert spaces considered in SPDEs theory). Given a  smooth function  $f \in \cinf_0(M)$ with compact support, we  define a linear functional
$$\begin{array}{rccc}
l_f:&C^0(M) & \longrightarrow & \mathbb{R}\\
& g & \longmapsto & l_f(g):=\int_M{f(x)g(x)dx}. \end{array}$$
We say that the Hilbert space $H$ satisfies the hypotheses $L$ if
\begin{itemize}
\item there exists a subset  $\mathcal{L} \subset \cinf_0(M)$ such that, for any $f \in \mathcal{L}$, the functional $l_f:H \cap \cinf(M) \rightarrow \mathbb{R}$ can be extended in a unique continuous way to all $H$;\\
\item the functionals of the form $l_f$ for $f \in \mathcal{L}$ separate the points of $H$ and of the Fr\'echet space $C^0(M)$.
\end{itemize}
An example of  Hilbert spaces satisfying the hypothesis $L$ are Sobolev spaces $H^r_w(M)$ of function weakly derivable $r$ times and whose weak derivatives are square integrable with respect to the measure $w(x)dx$, where $w(x)$ is a positive continuous function $w:M \backslash \{x_1,...,x_l\} \rightarrow \mathbb{R}$ and  $x_1,...,x_l \in M$. In this case the set $\mathcal{L}$ is formed by the functions $f \in \cinf_0(M)$ which are identically zero in some neighborhood of $x_1,...,x_l$.

\begin{proposition}\label{proposition_strong}
Let $H$ be a Hilbert space satisfying the hypothesis $L$. If $U^i_t$ is a $\cinf$ semimartingale and  $U^i_t$,
$F^i_{\alpha}(x,U_s,...,\partial^{\sigma}(U_s))$ and $V_{F_{\alpha}}(F^i_{\beta})(x,U_t,...,\partial^{\tau}(U_t))$ are locally bounded processes
in $H$, then the notion of solutions to an SPDE given in Definition \ref{definition_SPDE} and the two definitions given in  Definition
\ref{definition_strong_weak1} are equivalent.
\end{proposition}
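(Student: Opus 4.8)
The plan is to show a cycle of implications: Definition \ref{definition_SPDE} $\Rightarrow$ strong solution $\Rightarrow$ weak solution $\Rightarrow$ Definition \ref{definition_SPDE}. The middle implication (strong $\Rightarrow$ weak) is essentially trivial: one takes the It\^o equation \refeqn{equation_strong}, which holds in $H$, and applies a continuous linear functional $l_\xi = \langle \xi, \cdot \rangle$ for $\xi \in V$; since a continuous linear functional commutes with the Hilbert-space It\^o integral (this is a standard property of the stochastic integral in Hilbert spaces, see \cite{DaPrato1992}) and with the bounded-variation integral, one obtains \refeqn{equation_weak}. So the real content is the equivalence between the pointwise-in-$x$ notion of Definition \ref{definition_SPDE} and the Hilbert-space notions, and this is exactly where the hypothesis $L$ enters.

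First I would use the It\^o reformulation of Definition \ref{definition_SPDE} recalled just before Definition \ref{definition_strong_weak1}: under hypothesis $L$ we have at our disposal the functionals $l_f$, $f \in \mathcal{L}$, which extend continuously to all of $H$ and separate points both of $H$ and of $C^0(M)$. Taking the pointwise identity
$$
U^i_t(x)-U^i_0(x)=\int_0^t F^i_\alpha(x,U_s(x),\dots)\,dS^\alpha_s+\tfrac12\int_0^t V_{F_\alpha}(F^i_\beta)(x,U_t(x),\dots)\,d[S^\alpha,S^\beta]_s,
$$
valid for every fixed $x$, I would multiply by $f(x)$, $f\in\mathcal{L}$, and integrate over $M$. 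The key technical point is to exchange $\int_M f(x)(\cdot)\,dx$ with the stochastic integral $\int_0^t(\cdot)\,dS^\alpha_s$; this is a stochastic Fubini theorem, and its hypotheses are met here because $U_t$ is a $\cinf$ semimartingale depending smoothly on the parameter $x$ (Definition \ref{definition_semimartingale}) and $f$ has compact support, so the integrand is jointly nice and the local boundedness in $H$ gives the needed integrability. After the exchange one reads the resulting identity as $l_f(U^i_t-U^i_0)=\int_0^t l_f(F^i_\alpha(\dots))\,dS^\alpha_s+\dots$, i.e. exactly \refeqn{equation_weak} with $\xi$ replaced by the element of $H$ representing $l_f$; taking $V$ to be the span of these representatives (which separates points of $H$ by hypothesis $L$) gives a weak solution in the sense of Definition \ref{definition_strong_weak1}. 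For the converse direction, from a weak solution I would test \refeqn{equation_weak} against all $l_f$, $f\in\mathcal{L}$; since these separate points of $H$ the $H$-valued identity \refeqn{equation_strong} follows, giving a strong solution, and since they separate points of $C^0(M)$ and each side of the equation is, for fixed $t,\omega$, a continuous function of $x$, the functionals $l_f$ being point-separating on $C^0(M)$ forces the pointwise identity of Definition \ref{definition_SPDE} to hold for every $x\in M$.

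The main obstacle is the stochastic Fubini step: one must justify interchanging the deterministic spatial integral $\int_M f(x)\,dx$ with the Stratonovich (or, after reduction, It\^o) integral against the $S^\alpha$, uniformly enough that the exchanged integrand still defines a genuine $H$-valued, rather than merely pointwise, stochastic integral. I expect this to follow cleanly from Lemma \ref{lemma_SPDE1} (smoothness of the parametrized integral in $x$) together with the assumed local boundedness of $U^i_t$, $F^i_\alpha(x,U_s,\dots)$ and $V_{F_\alpha}(F^i_\beta)(x,U_t,\dots)$ as processes in $H$, which is precisely what converts the pointwise statements into $H$-valued ones; the compact support of $f\in\mathcal{L}$ handles any integrability issue at the boundary of $M$ or near the excluded points $x_1,\dots,x_l$ of the weight $w$. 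Once Fubini is in place the rest is bookkeeping with the continuity and point-separation properties packaged into hypothesis $L$. ${}\hfill$
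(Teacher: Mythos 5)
Your proposal is sound and its skeleton matches the paper's: the paper also proves Definition \ref{definition_SPDE} $\Leftrightarrow$ weak solution and dismisses weak $\Leftrightarrow$ strong as standard, and in both directions everything hinges on interchanging the functionals $l_f$, $f\in\mathcal{L}$, with the stochastic integrals, with hypothesis $L$ supplying the two separation properties exactly as you use them. Where you genuinely diverge is in how that interchange is justified. You outsource it to a stochastic Fubini theorem; the paper instead proves it by hand: since the Dirac deltas separate the points of $C^0(K)$ ($K$ the support of $f$) and $l_f\in (C^0(K))^*$, one picks a sequence $l_n$ of finite linear combinations of Dirac deltas converging weakly$^*$ to $l_f$, notes that the desired identity holds trivially for each $l_n$ (it is just a finite linear combination of the pointwise identities of Definition \ref{definition_SPDE}), and passes to the limit inside the stochastic integrals via the dominated convergence theorem for semimartingales (\cite[Ch.\ IV, Thm.\ 32]{Protter1990}), the dominating process coming from local boundedness of $U^i_t$, $F^i_\alpha(\cdot,U_s,\dots)$, $V_{F_\alpha}(F^i_\beta)(\cdot,U_t,\dots)$ in $C^0(K)$ (a consequence of the $\cinf$-semimartingale assumption), and the same argument is run in reverse for the converse. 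Your route is shorter if one has a Fubini theorem valid for general continuous semimartingale integrators at hand, but you would still need to verify its hypotheses (joint measurability and a domination condition), and note one small slip: the integrability needed for that step is local boundedness in $C^0(K)$, i.e.\ a sup over the compact support of $f$, not local boundedness in $H$; the $H$-boundedness is what you need afterwards, to make sense of the $H$-valued integrals and to upgrade the identities tested on the $l_f$'s (which separate points of $H$) to the weak/strong formulations of Definition \ref{definition_strong_weak1}. The paper's approximation argument buys self-containedness and avoids invoking any parametrized Fubini statement; yours buys brevity at the cost of that external citation and the hypothesis check.
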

\begin{proof}
We prove that Definition \ref{definition_SPDE} is equivalent to the definition of weak solution in Definition  \ref{definition_strong_weak1}. The equivalence between weak and strong solutions under the hypotheses of the proposition is standard.\\
Suppose that $U^i_t$ is a solution to the SPDE $F_1,...,F_r$ and $S^1,...,S^r$ with respect to Definition \ref{definition_SPDE}. Fix $f \in \cinf_0(M)$ and denote by $K$ the support of $f$. Since $U^i_t$ are $\cinf$ semimartingales, $U^i_t$, $F^i_{\alpha}(x,U_s,...,\partial^{\sigma}(U_s))$ and $V_{\beta}(F^i_{\alpha})(x,U_s,...,\partial^{\sigma}(U_s))$ are locally bounded processes in $C^0(K)$. Definition \ref{definition_SPDE} is equivalent to say that for any Dirac delta distribution $\delta_x$ centred in $x \in K$, the following equality holds
\begin{equation}\label{equation_delta1}
\delta_x(U^i_t-U^i_0)=\int_0^t{\delta_x(F^i_{\alpha}(x,U_s,...,\partial^{\sigma}(U_s)))dS^{\alpha}_s}+\int_0^t{\delta_x(V_{F_{\alpha}}(F^i_{\beta})(x,U_t,...,\partial^{\tau}(U_t)))d[S^{\alpha},S^{\beta}]_s}.
\end{equation}
Since Dirac delta functionals are continuous linear functionals in $(C^0(K))^*$ which separate the points of $C^0(K)$ and $l_f$ is a continuous linear functional in $(C^0(K))^*$, there exists a succession $l_n \in (C^0(K))^*$, made by finite linear combinations of Dirac deltas, which converges $\text{weakly}^*$ to $l_f$ in $(C^0(K),(C^0(K))^*)$. Furthermore, since $U^i_t$, $F^i_{\alpha}(x,U_s,...,\partial^{\sigma}(U_s))$ and $V_{F_{\alpha}}(F^i_{\beta})(x,U_t,...,\partial^{\tau}(U_t))$ are locally bounded in $C^0(K)$ and $l_n$ is strongly bounded in $(C^0(K))^*$, there exists a locally bounded process $H_t$ in $\mathbb{R}_+$ such that
$$|l_n(U^i_t)|,|l_n(F^i_{\alpha}(x,U_s,...,\partial^{\sigma}(U_s)))|,|l_n(F^i_{\alpha}(x,U_s,...,\partial^{\sigma}(U_s))|\leq H_t,$$
almost surely. Equation \refeqn{equation_delta1} holds with $\delta_x$ replaced by $l_n$ since $l_n$ is a finite linear combination of Dirac deltas. Taking the limit for $n \rightarrow + \infty$, by the dominate convergence theorem for semimartingales (see \cite[Chapter IV, Theorem 32]{Protter1990}), we obtain
\begin{equation}\label{equation_delta2}
l_f(U^i_t-U^i_0)=\int_0^t{l_f(F^i_{\alpha}(x,U_s,...,\partial^{\sigma}(U_s)))dS^{\alpha}_s}+\frac{1}{2}\int_0^t{l_f(V_{F_{\alpha}}(F^i_{\beta})(x,U_t,...,\partial^{\tau}(U_t)))d[S^{\alpha},S^{\beta}]_s}.
\end{equation}
Using a similar reasoning and the fact that the linear space composed by $l_f$, with $f \in \mathcal{L}$, separates the points of $H$ and that $U^i_t$, $F^i_{\alpha}(x,U_s,...,\partial^{\sigma}(U_s))$ and $V_{F_{\alpha}}(F^i_{\beta})(x,U_t,...,\partial^{\tau}(U_t))$ are locally bounded in $H$, we obtain that equation \refeqn{equation_weak} holds for any $\xi \in H$  and thus $U^i_t$ is a weak solution. \\
Conversely, if $U^i_t$ is a weak solution to the SPDE associated with $F_1,...,F_r$ and $S^1,...,S^r$, since the space $V \subset H$ separates the point of $H$ and $U^i_t,...$ are locally bounded in $H$, it is possible to prove \refeqn{equation_delta2} for any $f \in \mathcal{L}$. Since the space composed by $l_f$, with $f \in \mathcal{L}$, separates the points of $C^0(M)$ and $U^i_t,...,$ are locally bounded in $C^0(K)$ for any compact set $K \subset M$, we can prove  \refeqn{equation_delta1} which is   equivalent to Definition \ref{definition_SPDE}.
${}\hfill$\end{proof}\\

\noindent In general, proving the local boundedness of the  $\cinf$ semimartingales  $U^i_t,...$, $F^i_{\alpha}(x,U_s,...,\partial^{\sigma}(U_s))$ and $V_{F_{\alpha}}(F^i_{\beta})(x,U_t,...,\partial^{\tau}(U_t))$  with respect to the norm of $H$ as required by Proposition \ref{proposition_strong} is quite hard. Nevertheless there is a case where verifying this hypothesis is trivial. If the closure $\bar{M}$ of $M$ in $\mathbb{R}^m$ is compact and $U^i_t$ is a $\cinf$ semimartingale on all $\bar{M}$ (in other words for any $x$ derivatives $\partial^{\sigma}(U^i_t)(x)$ there exists the finite limit $x \rightarrow x_0 \in \partial M$) the processes  $U^i_t$, $F^i_{\alpha}(x,U_s,...,\partial^{\sigma}(U_s))$ and $V_{F_{\alpha}}(F^i_{\beta})(x,U_t,...,\partial^{\tau}(U_t))$ are locally bounded in all the Sobolev spaces of the form $H^r_w(M)$, where $w \in L^1(M)$.

\subsection{Finite dimensional solutions to SPDEs}

\begin{definition}\label{definition_finitedimensional}
 A smooth function  $K:M \times B \rightarrow N$  is a finite dimensional solution to the
SPDE \refeqn{equation_definitionSPDE1} if, for any $b_0 \in B$, there exists a semimartingale $B_t$ taking values in $B$
such that $B_0=b_0$  and $K(x,B_t)$ is a solution to the SPDE \refeqn{equation_definitionSPDE1}.
\end{definition}

It is important to note that, if $B_t \in B$ is a semimartingale and $K$ is a smooth function, then $K(x,B_t)$ is a semimartingale dependent on
the parameter $x$ in the sense of Definition \ref{definition_semimartingale}. Indeed, if we fix $x \in M$, since the function $K(x,b)$ is smooth
in all its arguments, $K(x,B_t)$ is a semimartingale, being obtained transforming  the semimartingale $B_t \in B$ with respect to the $\cinf(B,N)$
function $b \mapsto K(x,b)$.

\begin{remark}\label{remark_strong_weak}
We can request that $K$ is a finite dimensional solution to the SPDE associated with $F_1,...,F_r$ and $S^1,...,S^r$ with respect to the weak
and strong definition of solution to an SPDE in a Hilbert space $H$ given in Definition \ref{definition_strong_weak1}. Thanks to Proposition
\ref{proposition_strong} all these definitions are equivalent whenever the function $K$ is locally bounded in $H$ i.e., for any compact $K_B$,
there exists a constant $C_{K_B}$ such that $\sup_{b \in K_B}|K(\cdot,b)| \leq C_{K_B}$.
\end{remark}

\begin{theorem}\label{theorem_SPDE2}
If, for any $\alpha=1,...,r$,   $V_{F_{\alpha}} \in \mathcal{K}^{K}$,  then $K$ is a finite dimensional solution to the SPDE
\refeqn{equation_definitionSPDE1}. Conversely, if  $K$ is a finite dimensional solution to  \refeqn{equation_definitionSPDE1}
and  $\frac{dA^{\alpha\beta}_t}{dt}=\frac{d[S^{\alpha},S^{\beta}]_t}{dt}$ is nonsingular for all $t \in \mathbb{R}_+$, then $V_{F_{\alpha}} \in
\mathcal{K}^K$.
\end{theorem}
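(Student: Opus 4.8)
The plan is to reduce everything, via Theorem \ref{theorem_SPDE1}, to the lifted process $\bold{U}_t(x)=\bold{K}(x,B_t)$ and the evolution vector fields $V_{F_\alpha}$ on $\jinf(M,N)$, and to treat the two implications separately.

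For the first implication, I would start from the fact that, $\mathcal{K}^K$ being a finite dimensional canonical submanifold (Theorem \ref{theorem_finite}), one may, after possibly shrinking $M$ and $B$, use $\bold{K}$ to identify $\mathcal{K}^K$ with $M\times B$; in these coordinates the Cartan fields restricted to $\mathcal{K}^K$ read $D_i=\partial_{x^i}$, since $x\mapsto\bold{K}(x,b)$ is an integral section of $\mathcal{C}$. Because each $V_{F_\alpha}$ is an evolution vector field we have $V_{F_\alpha}(x^i)=0$, so its restriction $\tilde V_{F_\alpha}$ to $\mathcal{K}^K$ has no $\partial_{x^i}$ component; writing $\tilde V_{F_\alpha}=W^k_\alpha(x,b)\,\partial_{b^k}$ and restricting the identity $[V_{F_\alpha},D_i]=0$ to $\mathcal{K}^K$ gives $[\tilde V_{F_\alpha},\partial_{x^i}]=0$, hence the coefficients $W^k_\alpha$ do not depend on $x$ and define genuine vector fields $W_\alpha=W^k_\alpha(b)\,\partial_{b^k}$ on $B$. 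Then, for a fixed $b_0\in B$, I would let $B_t$ solve $dB_t=W_\alpha(B_t)\circ dS^\alpha_t$ with $B_0=b_0$, put $\bold{U}_t(x)=\bold{K}(x,B_t)$, and verify, by the Stratonovich change of variables applied for each fixed $x$ to the smooth map $b\mapsto f(x,\bold{K}(x,b))$ (for every $f\in\mathcal{F}$), that $\bold{U}_t$ satisfies \refeqn{equation_SPDE1}; here one uses tangency exactly in the form $V_{F_\alpha}(f)(x,\bold{K}(x,b))=W^k_\alpha(b)\,\partial_{b^k}\big(f(x,\bold{K}(x,b))\big)$. Since $\bold{U}_t$ is precisely the lifting of $K(x,B_t)$, Theorem \ref{theorem_SPDE1} gives that $K(x,B_t)$ solves \refeqn{equation_definitionSPDE1}, and as $b_0$ is arbitrary, $K$ is a finite dimensional solution.

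For the converse, fix $b_0\in B$ and a semimartingale $B_t$ with $B_0=b_0$ such that $K(x,B_t)$ solves \refeqn{equation_definitionSPDE1}; once more $\bold{U}_t(x)=\bold{K}(x,B_t)$ is its lifting and, by Theorem \ref{theorem_SPDE1}, solves the SDE driven by $V_{F_1},\dots,V_{F_r}$. Take any $h\in\mathfrak{F}$ with $h|_{\mathcal{K}^K}=0$; then $t\mapsto h(x,\bold{U}_t(x))$ is identically zero for each fixed $x$, so its continuous local martingale part vanishes. Rewriting \refeqn{equation_SPDE1} with $f=h$ in It\^o form, this martingale part equals $\int_0^t V_{F_\alpha}(h)(x,\bold{U}_s(x))\,dM^\alpha_s$, where $M^\alpha$ is the martingale part of $S^\alpha$, so its quadratic variation
$$\int_0^t \sum_{\alpha,\beta} V_{F_\alpha}(h)(x,\bold{U}_s(x))\,V_{F_\beta}(h)(x,\bold{U}_s(x))\,dA^{\alpha\beta}_s$$
is identically zero. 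The density of $dA^{\alpha\beta}_s$ is positive semidefinite and, by hypothesis, nonsingular, hence positive definite, which forces $V_{F_\alpha}(h)(x,\bold{U}_s(x))=0$ for $ds\times d\mathbb{P}$-a.e.\ $(s,\omega)$ and every $\alpha$; by a.s.\ continuity of $s\mapsto V_{F_\alpha}(h)(x,\bold{U}_s(x))$ this holds for every $s$, in particular at $s=0$, giving $V_{F_\alpha}(h)(x,\bold{K}(x,b_0))=0$. Since $x\in M$ and $b_0\in B$ are arbitrary, $V_{F_\alpha}(h)$ vanishes on all of $\mathcal{K}^K$, and since $h$ was arbitrary among functions vanishing on $\mathcal{K}^K$, each $V_{F_\alpha}$ is tangent to $\mathcal{K}^K$.

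I expect the main obstacle to be the converse: extracting the pointwise tangency condition $V_{F_\alpha}(h)|_{\mathcal{K}^K}=0$ from the functional identity $h(x,\bold{U}_t(x))\equiv 0$. This is precisely where the nonsingularity of the quadratic variation density is essential — it is what allows one to pass from the vanishing of the quadratic variation of a single sum to the vanishing of each $V_{F_\alpha}(h)$, and then to localize that information at the initial point $b_0$ through the continuity argument. In the forward direction the only mildly delicate point is the possible finite lifetime of the auxiliary solution $B_t$ on $B$, which is handled by working up to its explosion time or by further shrinking $B$.
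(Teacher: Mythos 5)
Your proposal is correct and follows essentially the same route as the paper: in the forward direction you transport the tangent fields $V_{F_\alpha}$ to $x$-independent vertical fields on $M\times B$ via $\bold{K}$ (using $V_{F_\alpha}(x^i)=0$ and $[V_{F_\alpha},D_i]=0$), solve the induced SDE on $B$, and conclude through Theorem \ref{theorem_SPDE1}, exactly as in the paper's construction of the $Y_\alpha$. The only cosmetic difference is in the converse, where you annihilate the quadratic variation of the martingale part and invoke positive definiteness of $dA^{\alpha\beta}/dt$, whereas the paper takes the covariation of the vanishing integral with each $S^\beta$ and uses nonsingularity; both yield $V_{F_\alpha}(h)(\bold{K}(x,b_0))=0$ by the same continuity-at-$t=0$ argument.
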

\begin{proof}
If $V_{F_{\alpha}} \in \mathcal{K}^{K}$ there exist $r$ uniquely determined vector fields $Y_1,...,Y_r$ in the trivial bundle $M \times B$ such
that $\bold{K}_{*}(Y_{\alpha})=V_{F_{\alpha}}$. Since $V_{F_{\alpha}}$ are vertical in $\jinf(M,N)$, the vector fields $Y_{\alpha}$ are vertical
in the bundle $M \times B$. Furthermore, since $[D_{x^i},V_{F_{\alpha}}]=0$ and $\bold{K}_*(\partial_{x^i})=D_{x^i}$, we have that
$[\partial_{x^i},Y_{\alpha}]=0$ and so the vector fields $Y_{\alpha}$ are independent of $x^i$. If $B_t$ is the unique solution on $B$ to the
SDE associated with $Y_1,...,Y_r$ and $S^1,...,S^r$ such that $B_0=b_0 \in B$, then $U_t(x)=K(x,B_t)$ is a solution to the SPDE
\refeqn{equation_definitionSPDE1}. We prove this fact by showing that $\bold{U}_t(x)$ is a solution to the SDE $V_{F_{1}},...,V_{F_r}$ and
$S^1,...,S^r$ and then using Theorem \ref{theorem_SPDE1}. In fact, if $f \in \mathcal{F}$, then
\begin{eqnarray*}
f(x,\bold{U}_t(x))-f(x,\bold{U}_0(x))&=&\bold{K}^*(f)(x,B_t)-\bold{K}^*(f)(x,B_0)\\
&=&\int_0^t{Y_{\alpha}(\bold{K}^*(f))(x,B_s) \circ dS^{\alpha}_s}\\
&=&\int_0^t{\bold{K}^*(\bold{K}_*(Y_{\alpha})(f))(x,B_s) \circ dS^{\alpha}_s}\\
&=&\int_0^t{\bold{K}^*(V_{\alpha}(f))(x,B_s) \circ dS^{\alpha}_s}=\int_0^t{V_{\alpha}(f)(x,\bold{U}_s(x))\circ dS^{\alpha}_s}.
\end{eqnarray*}
Conversely, suppose that, for any $b_0 \in B$, there exists a semimartingale $B_t \in B$ such that $B_0=b_0$ and $K(x,B_t)$ is a solution to the
SPDE \refeqn{equation_definitionSPDE1}. If, for any function $f \in \mathcal{F}$ such that $f|_{\mathcal{K}^{K}}=0$, we have
$V_{F_{\alpha}}(f)|_{\mathcal{K}^K}=0$, then $V_{F_{\alpha}} \in T\mathcal{K}^{K}$. Let  $f \in \mathcal{F}$ be such that
$f|_{\mathcal{K}^{K}}=0$.  By It\^o formula we have
\begin{eqnarray*}
0&=&f(\bold{K}(x,B_t))-f(\bold{K}(x,B_0))\\
&=&\int_0^t{V_{F_{\alpha}}(f)(\bold{K}(x,B_s)) \circ dS^{\alpha}_s},
\end{eqnarray*}
and this ensures that the quadratic covariation of $\int_0^t{V_{F_{\alpha}}(f)(\bold{K}(x,B_s)) \circ dS^{\alpha}_s}$ with any $S^{\beta}$ is zero, i.e.
$$\int_0^t{V_{F_{\alpha}}(f)(\bold{K}(x,B_s))dA^{\alpha\beta}_s}=0.$$
Since the matrix $A_s$ is nonsigular for any $t$, in particular we  have that
$$V_{F_{\alpha}}(f)(\bold{K}(x,B_t))=0,$$
almost surely and for any $t \in \mathbb{R}_+$. Taking the limit for $t \rightarrow 0$ we obtain $V_{F_{\alpha}}(f)(\bold{K}(x,b_0))=0$. Since
$b_0 \in B$ is a generic point and $\bold{K}(x,b)$ is a surjective map from $M \times B$ into $\mathcal{K}^K$ we find
$V_{F_{\alpha}}(f)|_{\mathcal{K}^K}=0$.
${}\hfill$\end{proof}

\begin{remark}
It is possible to generalize Theorem \ref{theorem_SPDE2} in several directions. First it is possible to state both the sufficient
and necessary condition of Theorem \ref{theorem_SPDE2} for the SPDEs described in Remark \ref{remark_SPDE1}.\\
Furthermore, if we consider $B$ as a smooth manifold with boundary, the manifold $\mathcal{K}^K$ turns out to be itself a manifold with boundary and the sufficient
condition of Theorem \ref{theorem_SPDE2} is no more true. Indeed we have to add a new condition, i.e. that the SDE solved by the process $B_t$  should have a solution
for any starting point $b_0 \in B$. We remark that  this additional condition  is satisfied, for example, when $V_{F_{\alpha}} \in T (\partial \mathcal{K}^K)$.
\\
Finally, the condition on $A^{\alpha\beta}$ can be relaxed: in particular, if $S^{\alpha}$ (for $\alpha=1,...,l$) are absolutely
continuous, $\frac{dS^{\alpha}}{dt} \not =0$ almost surely and $\frac{dS^{\alpha}}{dt}$ are almost surely linearly independent with respect to
the time $t$,  we have to ensure that  $\frac{dA^{\alpha\beta}}{dt}$  is nonsingular only for $\alpha,\beta >l$. For example, this is the case
when $S^1_t=t$ and $S^{\alpha}_t=W^{\alpha}_t$ (for $\alpha=2,...,r$), where $W^{\alpha}_t$ are $r-1$ independent Brownian motions.
\end{remark}

It is interesting to note that Theorem \ref{theorem_SPDE2} provides an explicit method to construct the process $B_t$ appearing in  Definition \ref{definition_finitedimensional} when
 we do not  have the explicit reduction function $K(x,b)$ but only the finite dimensional manifold  $\mathcal{K}$. In fact, taking
a coordinate system $x^i$ on $M$ and  coordinates $z^1,...,z^p$ on $\mathcal{K}$ (we can use for example some $u^i,u^i_{\sigma}$), there are  some functions $\Xi^i_{\alpha}(x,z)$ such that
$$V_{F_{\alpha}}|_{\mathcal{K}}=\Xi^i_{\alpha}(x,z)\partial_{z^i},$$
the vector fields $V_{F_{\alpha}}$ being tangent to $\mathcal{K}$. On the other hand, the fact that  $\mathcal{K}$ is a canonical manifold ensures that also the vector fields $D_{x^i}$ are tangent to $\mathcal{K}$: in particular, there are some
functions $\Sigma^j_i(x,z)$ such that
$$D_{x^i}=\partial_{x^i}+\Sigma^j_i(x,z) \partial_{z^j}.$$
We define some processes $Z^i_t(x)$ which solve the following system of SDE in $t$ and PDE in $x^i$
\begin{eqnarray}
dZ^i_t(x)=\Xi^i_{\alpha}(x,Z_t(x)) \circ dS^{\alpha}_t\label{equation_solve1}\\
\partial_{x^i}(Z^j_t(x))=\Sigma^j_i(x,Z_t(x)).\label{equation_solve2}
\end{eqnarray}
The function $u^i \in \mathcal{F}$ can be expressed using the coordinates $(x^i,z^j)$ in $\mathcal{K}$, which means that there exists a function
$\mathfrak{U}^i(x,z)$ such that $u^i|_{\mathcal{K}}=\mathfrak{U}^i(x,z)$. With this notation the finite dimensional solution to the SPDE
\refeqn{equation_definitionSPDE1} is given by
$$U^i_t(x)=\mathfrak{U}^i(x,Z_t(x)).$$

\section{Characteristics and the construction of finite dimensional solutions to SPDEs}\label{section_characteristics}

In section \ref{section_finite} we reduced the problem of finding finite dimensional smooth solutions to an SPDE defined by $F_1,...,F_r \in \mathfrak{F}^n$ and the semimartingales $S^1,...,S^r$ to the problem of finding finite dimensional canonical manifolds $\mathcal{K}$ such that $V_{F_1},...,V_{F_r} \in T\mathcal{K}$. In this section we  provide a method to explicitly construct such canonical manifolds. The construction is based on the concept of characteristic of an evolution vector field, which is an extension of the usual notion of characteristic of a first order PDE.

\subsection{A necessary condition for the existence of finite dimensional solution to an SDPE}

In this section we provide a condition to be satisfied by the vector fields $V_{F_1},...,V_{F_r}$  in order to guarantee that a  SPDE defined by $F_1,...,F_r$ and $S^1,...,S^r$  admits a finite dimensional solution. In order to obtain an  extension of the necessary condition of Frobenious theorem,   we prove  the following

\begin{proposition}\label{proposition_necessary}
If the evolution vector fields $V_{F_1},...,V_{F_r}$ are in the tangent space of a finite dimensional manifold $\mathcal{K}$, then $V_{F_1},...,V_{F_r}$ generate a finite dimensional module on $\mathcal{K}$.
\end{proposition}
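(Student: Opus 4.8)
The plan is to show that tangency of evolution vector fields to a finite dimensional manifold forces all of their iterated Lie brackets — and all $\mathfrak{F}$-multiples thereof — to collapse onto the finite dimensional tangent spaces of $\mathcal{K}$. First I would recall from the preliminaries that the bracket of two evolution vector fields is again an evolution vector field: if $V_F, V_G$ are generated by $F, G \in \mathfrak{F}^n$, then $[V_F, V_G] = V_{[F,G]}$ with $[F,G] \in \mathfrak{F}^n$. Hence the Lie subalgebra $\mathfrak{g}$ of the Lie algebra of evolution vector fields generated by $V_{F_1},\dots,V_{F_r}$ consists of evolution vector fields $V_H$, all of which have coefficients in $\mathfrak{F}$ and therefore satisfy $V_H(\mathfrak{F}) \subseteq \mathfrak{F}$.

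Next I would prove, by induction on bracket length, that every element of $\mathfrak{g}$ is tangent to $\mathcal{K}$. The base case is the hypothesis $V_{F_\alpha} \in T\mathcal{K}$, and the inductive step uses the fact — to be checked against the functional definition of tangency in the excerpt — that the bracket of two vector fields tangent to $\mathcal{K}$ is again tangent: if $h \in \mathfrak{F}$ vanishes on $\mathcal{K}$ and $V, W \in T\mathcal{K}$, then $W(h) \in \mathfrak{F}$ also vanishes on $\mathcal{K}$, so $V(W(h))$ vanishes on $\mathcal{K}$, and symmetrically, whence $[V,W](h) = V(W(h)) - W(V(h))$ vanishes on $\mathcal{K}$. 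Since $\mathfrak{F}$-linear combinations of vector fields tangent to $\mathcal{K}$ are again tangent (because $(fV)(h) = f\,V(h)$), and since the $\mathfrak{F}$-module generated by the bracket-closed set $\mathfrak{g}$ is itself bracket-closed via $[fV, gW] = fg\,[V,W] + f\,V(g)\,W - g\,W(f)\,V$, the module $\mathcal{M}$ generated by $V_{F_1},\dots,V_{F_r}$ coincides with the $\mathfrak{F}$-span of $\mathfrak{g}$ and consists entirely of vector fields tangent to $\mathcal{K}$.

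Finally I would pass to the restriction to $\mathcal{K}$. A vector field with coefficients in $\mathfrak{F}$ that is tangent to $\mathcal{K}$ preserves the ideal of functions of $\mathfrak{F}$ vanishing on $\mathcal{K}$, hence descends to a derivation of $\mathfrak{F}|_{\mathcal{K}}$; since $\mathcal{K}$ is a genuine finite dimensional submanifold of $\jinf(M,N)$ — so that, after restricting $M$, some of the coordinates $u^i_\sigma$ restrict to a chart on it — this derivation is an honest vector field on $\mathcal{K}$. Therefore, at every $p \in \mathcal{K}$, the values of all vector fields in $\mathcal{M}$ lie in the finite dimensional space $T_p\mathcal{K}$, so the distribution spanned by $\mathcal{M}|_{\mathcal{K}}$ has dimension at most $\dim\mathcal{K}$; that is, $V_{F_1},\dots,V_{F_r}$ generate a finite dimensional module on $\mathcal{K}$.

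The statement is essentially a bookkeeping lemma, and the point requiring the most care is verifying that ``tangent to $\mathcal{K}$'' in the functional sense of the excerpt ($V(h)|_{\mathcal{K}} = 0$ whenever $h \in \mathfrak{F}$ with $h|_{\mathcal{K}} = 0$) is genuinely preserved under Lie bracket and under multiplication by elements of $\mathfrak{F}$, and that the resulting objects really restrict to vector fields on the finite dimensional $\mathcal{K}$. These are subtle only because $\jinf(M,N)$ is infinite dimensional and iterated brackets of evolution fields a priori excite arbitrarily high order jet coordinates; the finite dimensionality of $\mathcal{K}$ is precisely what forces those excitations to cancel upon restriction.
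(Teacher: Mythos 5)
Your proposal is correct and follows essentially the same route as the paper's (much terser) proof: tangency to $\mathcal{K}$ is preserved under Lie brackets, since $[V_{F_i},V_{F_j}]=V_{[F_i,F_j]}$ is again an evolution vector field tangent to $\mathcal{K}$, and the finite dimensionality of $T\mathcal{K}$ then bounds the module generated on $\mathcal{K}$. The extra detail you supply (the functional-tangency computation $[V,W](h)=V(W(h))-W(V(h))$ and the restriction to honest vector fields on $\mathcal{K}$) just makes explicit what the paper leaves implicit.
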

\begin{proof}
Being $V_{F_i},V_{F_j} \in T\mathcal{K}$, then $[V_{F_i},V_{F_j}]=V_{[F_i,F_j]} \in T\mathcal{K}$. Since $T\mathcal{K}$ is finite dimensional, $V_{F_1},...,V_{F_r}$ and all their Lie brackets form a finite dimensional module on $\mathcal{K}$.
\hfill \end{proof}\\

\noindent Using Proposition \ref{proposition_necessary} and the fact that the commutator of two evolution vector fields is an evolution vector field, we can suppose that $\mathcal{S}=\spann\{V_{F_1},...,V_{F_r}\}$ is a finite dimensional module on $\mathcal{K}$. Indeed, if is not the case,  we can add to the list of $V_{F_i}$ all their commutators $V_{[F_i,F_j]},...,V_{[F_i,[F_j,F_k]]},...$ and,  since $T\mathcal{K}$ is finite dimensional, we are sure that we are adding a finite number of vector fields. \\
In particular, if $V_{F_1},...,V_{F_r} \in T\mathcal{K}$,  we can suppose that $\mathcal{S}$ is a finite dimensional formally integrable module on $\mathcal{K}$. Since $V_{F_i}$ are not general vector fields on $\jinf(M,N)$ but they are evolution vector fields we can prove a stronger proposition.

\begin{proposition}\label{proposition_lie_algebra}
Let $V_{F_1},...,V_{F_r}$ be evolution vector fields in $\jinf(M,N)$ such that $\mathcal{S}$ is an $r$-dimensional (formally) integrable
distribution on a submanifold $\mathcal{K}$ of $\jinf(M,N)$. If
$$[V_{F_i},V_{F_j}]=\sum_h \lambda_{i,j}^hV_{F_h}$$
then $D_l(\lambda_{i,j}^h)=0$ on $\mathcal{K}$.
\end{proposition}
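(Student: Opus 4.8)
The plan is to deduce the statement from the Jacobi identity together with the two structural facts about evolution vector fields recalled in Section \ref{prel}: they commute with every total derivative, $[D_l, V_{F_h}] = 0$, and the bracket of two of them is again an evolution vector field. The latter guarantees that $[V_{F_i},V_{F_j}] = \sum_h \lambda_{i,j}^h V_{F_h}$ on $\mathcal{K}$ is a meaningful decomposition; here I also use that $\mathcal{S}$ is $r$-dimensional and is spanned by exactly the $r$ fields $V_{F_1},\dots,V_{F_r}$, so that these are pointwise linearly independent along $\mathcal{K}$ and the coefficients $\lambda_{i,j}^h$ are uniquely determined smooth functions on $\mathcal{K}$. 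Finally I use that $D_l$ is tangent to $\mathcal{K}$, so that $D_l(\lambda_{i,j}^h)$ is well defined.

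First I would apply the Jacobi identity to the triple $D_l, V_{F_i}, V_{F_j}$. Since $[V_{F_j}, D_l] = -[D_l, V_{F_j}] = 0$ and $[D_l, V_{F_i}] = 0$, two of the three terms vanish and we obtain $[D_l, [V_{F_i}, V_{F_j}]] = 0$ as vector fields on $\jinf(M,N)$.

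Next I would extend each $\lambda_{i,j}^h$ to a function $\tilde\lambda_{i,j}^h \in \mathfrak{F}$ on a neighbourhood of $\mathcal{K}$ and consider
$$W := [V_{F_i}, V_{F_j}] - \sum_h \tilde\lambda_{i,j}^h\, V_{F_h},$$
a vector field with coefficients in $\mathfrak{F}$ which vanishes identically on $\mathcal{K}$. Expanding the bracket with $D_l$, using $[D_l, V_{F_h}] = 0$ and the previous step, gives
$$[D_l, W] = -\sum_h D_l(\tilde\lambda_{i,j}^h)\, V_{F_h}.$$
On the other hand, in a chart adapted to $\mathcal{K}$ every component of $W$ vanishes on $\mathcal{K}$, hence so does its derivative along the tangential field $D_l$, while the cross term in the Lie bracket carries a component of $W$ as a factor; therefore $[D_l, W]$ vanishes on $\mathcal{K}$. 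Comparing the two expressions, $\sum_h D_l(\tilde\lambda_{i,j}^h)\, V_{F_h}$ vanishes on $\mathcal{K}$, and since $D_l(\tilde\lambda_{i,j}^h)|_{\mathcal{K}} = D_l(\lambda_{i,j}^h)$ we conclude $\sum_h D_l(\lambda_{i,j}^h)\, V_{F_h}|_{\mathcal{K}} = 0$. The pointwise linear independence of $V_{F_1},\dots,V_{F_r}$ along $\mathcal{K}$ then forces $D_l(\lambda_{i,j}^h) = 0$ on $\mathcal{K}$ for every $h$.

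The routine-but-delicate point, and the only real obstacle, is checking that $[D_l, W]|_{\mathcal{K}} = 0$ whenever $W|_{\mathcal{K}} = 0$ and $D_l \in T\mathcal{K}$ — equivalently, that the conclusion is independent of the chosen extension $\tilde\lambda_{i,j}^h$; everything else is a one-line application of the Jacobi identity and of the defining property of evolution vector fields.
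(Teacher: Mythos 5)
Your argument is correct, and it reaches the conclusion by a more invariant route than the paper's. The paper argues componentwise, in the special case $M=N=\mathbb{R}$ with the decomposition holding on all of $\jinf(M,N)$: since $[V_{F_i},V_{F_j}]=V_{F_{i,j}}$ is again an evolution vector field, reading the decomposition on the $\partial_{u_{(r)}}$-components gives $D_x^r(F_{i,j})=\sum_h\lambda_{i,j}^h D_x^r(F_h)$ for every $r$; applying $D_x$ to one such identity and subtracting the identity at level $r+1$ yields $\sum_h D_x(\lambda_{i,j}^h)D_x^r(F_h)=0$, and the nonsingularity of a matrix of the form $(D_x^{h+j-1}(F_i))$ (available because $\mathcal{S}$ is $r$-dimensional) forces $D_x(\lambda_{i,j}^h)=0$; the general case is left as \virgolette{a simple generalization}. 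You encode the same mechanism (evolution fields commute with total derivatives) in the Jacobi identity, obtaining $[D_l,[V_{F_i},V_{F_j}]]=0$, and then combine it with the standard fact that the bracket of a field tangent to $\mathcal{K}$ with a field vanishing on $\mathcal{K}$ vanishes on $\mathcal{K}$; pointwise linear independence of $V_{F_1},\dots,V_{F_r}$ along $\mathcal{K}$ (your substitute for the paper's nonsingular minor) finishes the proof. What this buys: it is coordinate-free, works for arbitrary $M,N$, and treats directly the case where the decomposition holds only along $\mathcal{K}$, i.e.\ exactly the part the paper defers. Moreover, the point you flag as delicate is in fact immediate with the paper's own definition of tangency ($V(h)|_{\mathcal{E}}=0$ whenever $h\in\mathfrak{F}$ and $h|_{\mathcal{E}}=0$), applied to each component of $W$, all of which lie in $\mathfrak{F}$. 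Two things you should state explicitly: (i) you use $D_l\in T\mathcal{K}$, which is not in the wording of the proposition but is part of the intended setting (in the paper's applications $\mathcal{K}$ is canonical, and in its written proof $\mathcal{K}=\jinf(M,N)$, so tangency is automatic; it is in any case needed for $D_l(\lambda_{i,j}^h)$ to be meaningful when the $\lambda_{i,j}^h$ live only on $\mathcal{K}$); (ii) the existence of extensions $\tilde\lambda_{i,j}^h\in\mathfrak{F}$ deserves a word: near a point of $\mathcal{K}$ choose $r$ components $D^{\sigma_j}(F_i^{c_j})$ forming a nonsingular $r\times r$ matrix (possible since $\mathcal{S}$ is $r$-dimensional) and define $\tilde\lambda_{i,j}^h$ by Cramer's rule from the corresponding components of $V_{[F_i,F_j]}$ -- which is, in effect, the same matrix the paper's proof relies on.
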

\begin{proof}
The proof is given  for the case $N=M=\mathbb{R}$ and $\mathcal{H}=\jinf(M,N)$;  the general case is a simple generalization of this one.\\
Since $\mathcal{S}$ is $r$-dimensional, for any point $p \in \jinf(M,N)$ there exist a neighborhood $U$ of $p$ and an integer $h \in
\mathbb{N}_0$ such that the matrix $A=(D_x^{h+j-1}(F_i))|_{i,j=1,...,s}$ is non-singular. Moreover, since the commutator of two evolution vector
fields is an evolution vector field, there exist some $F_{i,j} \in \mathcal{F}$ such that $[V_{F_i},V_{F_j}]=V_{F_{i,j}}$ and, by the definition
of evolution vector field,  we have
\begin{equation}\label{CN1}
D^r_x(F_{i,j})=\sum_h \lambda^h_{i,j} D^r_x(F_h).
\end{equation}
Deriving with respect to $x$ the previous relations we obtain
\begin{equation}
\label{CN2}D^{r+1}_x(F_{i,j})=\sum_h D_x(\lambda^h_{i,j}) D^r_x(F_h)+\sum_h \lambda^h_{i,j} D^{r+1}_x(F_h)
\end{equation}
and combining \eqref{CN1} and \eqref{CN2} we find
$$\sum_h D_x(\lambda^h_{i,j}) D^r_x(F_h)=0.$$
Since the matrix $A$ is non-singular, we get  $D_x(\lambda^h_{i,j})=0$.
\hfill\end{proof}\\

\noindent Proposition \ref{proposition_lie_algebra} implies  that if an SPDE associated with $(F_1, \ldots, F_r)$ admits a finite dimensional solution  passing through any point of $\jinf(M,N)$, the vector fields $V_{F_1},...,V_{F_r}$ have to form not only a module on $\jinf(M,N)$ but a Lie algebra.
For this reason in the following we always suppose that $V_{F_1},...,V_{F_r}$ form a Lie algebra, i.e.  there exist some constants $\lambda^i_{j,k} \in \mathbb{R}$ such that
$$[V_{F_i},V_{F_j}]=\sum_h \lambda_{i,j}^hV_{F_h}.$$

\subsection{Characteristics for evolution vector fields}

In the previous section we provided an infinite dimensional analogous of necessary condition of Frobenious theorem.
Unfortunately the sufficient part of Frobenious theorem rests on  the existence of the flow of a smooth vector field on a finite dimensional manifold. Since $\jinf(M,N)$ is infinite dimensional, and  a general evolution vector field does not have flow, the necessary conditions given in  Propositions \ref{proposition_necessary} and  \ref{proposition_lie_algebra} are not sufficient. \\
Therefore, in order to find a sort of complete Frobenius theorem in our setting,  we have to introduce the concept of characteristic of an evolution vector field in $\jinf(M,N)$. We say that a vector field $V=\phi^i \partial_{x^i}+\psi^j \partial_{u^j}+\psi^j_{\sigma} \partial_{u^j_{\sigma}}$, where $\phi^i,\psi^j,\psi^j_{\sigma} \in \mathfrak{F}$, \emph{admits flow in $ \jinf(M,N)$ } if  there exists a sequence of smooth functions $\Phi_a^{x^i},\Phi_a^{u^j},\Phi_a^{u^j_{\sigma}} \in \mathfrak{F}$ (where $a$ is a parameter) such that
\begin{eqnarray*}
\partial_a(\Phi_a^{x^i})(x,u,u_{\sigma})&=&\phi^i(\Phi^{x^i}_a(x,u,u_{\sigma}),\Phi_a^{u^j_{\sigma}}(x,u,..),...)\\
\Phi_0^{x^i}(x,u,u_{\sigma})&=&x^i\\
\partial_a(\Phi_a^{u^j})(x,u,u_{\sigma})&=&\psi^j(\Phi^{x^i}_a(x,u,u_{\sigma}),\Phi_a^{u^j_{\sigma}}(x,u,..),...)\\
\Phi_0^{u^j}(x,u,u_{\sigma})&=&u^j\\
&...&
\end{eqnarray*}
Usually we can request that the functions $\Phi_a^{x^i},...,$ are defined only locally, which means that, for any open bounded interval $ I
\subset \mathbb{R}$  with $0 \in I$,  if $a \in I$, the functions $\Phi^{x^i}_a,...$ are defined in a suitable open subset of $\jinf(M,N)$ (see
\cite{DMevolution} for the details). The function $\Phi_a=(\Phi^{x^i}_a,\Phi^{u^j}_a,...):\jinf(M,N) \rightarrow \jinf(M,N)$ can be seen as a
smooth map between $\jinf(M,N)$ and itself. Indeed $\Phi_a$ has the property that, whenever $F \in \mathfrak{F}$,
$$\Phi_a^*(F)(x,u,u_{\sigma})=F(\Phi^{x^i}_a(x,u,u_{\sigma}),\Phi^{u^j}_a(x,...),...) \in \mathfrak{F}.$$
Furthermore, as in the finite dimensional case, we have
$$\partial_a(\Phi^*_a(F))=\Phi^*_a(V(F)).$$

\begin{definition}\label{definition_characteristic}
A function $F \in \mathfrak{F}^n$, defining the evolution vector field $V_F$, admits characteristics if there exist suitable  functions $h^1,...,h^m \in \mathfrak{F}$ such that the vector field
$$\bar{V}_F=V_F-\sum_{i=1}^m h^i D_i$$
admits flow in $\jinf(M,N)$. We call the vector field $\bar{V}_F$ the characteristic vector field of $F$ (or equivalently of $V_F$).
\end{definition}

The previous definition  provides a generalization  of the well known concept of characteristics for a
first order function $F \in \mathfrak{F}^1$ when $N=\mathbb{R}$ to the case  of $M,N$ of general dimension. Since in this paper we are mainly interested in the computational aspects
of the theory, in the following we provide an explicit example and we refer the interested reader to \cite{DMevolution}
where Definition \ref{definition_characteristic} is introduced and  studied in detail.  \\

Consider $M=N=\mathbb{R}$ and the function $F=u u_x$. The function $F \in \mathfrak{F}$ admits characteristic which are given by the vector field $\bar{V}_F$ defined as
$$\bar{V}_F=V_F-uD_x.$$
Indeed, the components of the vector field $\bar{V}_F$ are
$$\bar{V}_F=-u\partial_x+u_x^2\partial_{u_x}+2u_{xx}u_x\partial_{u_{xx}}+...$$
and it is simple to verify that, since  $\bar{V}_F(\mathfrak{F}^k) \subset \mathfrak{F}^k$ for any  $k$,  $\bar{V}_F$ admits flow which is given by
\begin{eqnarray*}
\Phi_a^x(x,u,u_{(n)})&=&x-au\\
\Phi_a^u(x,u,u_{(n)})&=&u\\
\Phi^{u_x}_a(x,u,u_{(n)})&=&\frac{u_x}{1-au_x}\\
\Phi^{u_{xx}}_a(x,u,u_{(n)})&=&\frac{u_{xx}}{(1-au_x)^2}\\
&...&\\
\Phi^{u_{(n)}}_a(x,u,u_{\sigma})&=&\frac{D_x(\Phi^{u_{(n-1)}}_a(x,u,u_{(n)}))}{D_x(\Phi^x_a(x,u,u_{(n)}))}\\
&...&
\end{eqnarray*}

\subsection{Building finite dimensional solutions to SPDEs}\label{Building differential constraints}

We start this section by introducing some technical notions in order to  prove two different versions of Frobinious theorem based on the concept of characteristics for evolution vector fields in $\jinf(M,N)$. \\

Let  $\mathcal{H}\subset \jinf(M,N) $ be a submanifold and $U$ be an open neighborhood of $p \in \mathcal{H}$. Given a sequence of independent functions $f^i \in \mathfrak{F}|_U$ ($i \in \mathbb{N}$) such that $\mathcal{H} \cap U$ is the set of zeros of $f^i$, we say that a distributions  $\Delta= \spann\{V_{G_1}, \ldots, V_{G_h}\}$ is transversal to $\mathcal{H}$ in $U$ if there exist $r_1, \ldots , r_h$ such that
the matrix $(\bar{V}_{G_i}(f^{r_j}))|_{i, j=1,...,h}$ has maximal rank in $U$.\\

Let $\mathfrak{G}_0 \subset \mathfrak{F}$ be a subalgebra of $\mathfrak{F}$ which can be generated, through the composition with smooth functions, by a finite set of elements. Starting from $\mathfrak{G}_0$ we  define a filtration $\mathfrak{G}_0 \subset \mathfrak{G}_1 \subset \mathfrak{G}_2 \subset ... \subset \mathfrak{F}$, where  the subalgebra $\mathfrak{G}_k$ is generated by the composition with smooth functions in $\mathfrak{G}_{k-1}$ and in $D_i(\mathfrak{G}_{k-1})$. We say that $\mathfrak{G}_k$ is a filtration  generating $\mathfrak{F}$ if $\mathfrak{F}=\cup_{k \in \mathbb{N}_0} \mathfrak{G}_k$. A natural  example of a filtration  generating $\mathfrak{F}$ is given by $\mathfrak{G}_k=\mathfrak{F}_k$. Let $G_1,...,G_h$ be a set of functions in $\mathfrak{F}^n$  admitting characteristic and let  $\bar{V}_{G_1},...,\bar{V}_{G_h}$ be the corresponding characteristic vector fields. We say that $G_1,...,G_h$ admits a common filtration if there exists a filtration $\mathfrak{G}_k$ constructed as above which generates $\mathfrak{F}$  and is such that $\bar{V}_{G_i}(\mathfrak{G}_k) \subset \mathfrak{G}_k$. \\

Finally, in order to simplify the notations, if   $\Phi^1_{a_1},...,\Phi^h_{a_h}$ are flows in $\jinf(M,N)$, for any $p \in \jinf(M,N)$ and $F \in \mathfrak{F}$, we write
\begin{eqnarray*}
\bold{\Phi}_{\bold{a}}(p)&=&\Phi^h_{a_h}(\Phi^{h-1}_{a_{h-1}}(...(\Phi^1_{a_1}(p))...))\\
\bold{\Phi}^*_{\bold{a}}(F)&=&\Phi^{1*}_{a_1}(\Phi^{2*}_{a_2}(...(\Phi^{h*}_{a_h}(F))...)),
\end{eqnarray*}
where $\bold{a}=(a_1,...,a_h)$.

\begin{theorem}\label{theorem_main1}
Let $F_1,...,F_l,G_1,...,G_h \in \mathfrak{F}^n$ generate a finite dimensional Lie algebra, and $G_1,...,G_h$ generate a finite dimensional Lie subalgebra such that $\dim(\spann\{V_F,V_{G_1},...,V_{G_{h}}\})=h+l$. If  $G_1,...,G_h$ admit characteristics and have a common filtration and  $\mathcal{H}$ is a finite dimensional canonical submanifold of $\jinf(M,N)$ such that $V_{F_1},...,V_{F_l} \in T \mathcal{H}$ and that $V_{G_1},...,V_{G_h}$ generate a  distribution transversal to $\mathcal{H}$, then there exists a suitable  neighborhood $\mathcal{V} \subset \mathbb{R}^h$  of $0$ such that
\begin{equation}\label{manifold kappa}
\mathcal{K}=\bigcup_{\bold{a} \in \mathcal{V}} \Phi^h_{a^h}(...(\Phi^{1}_{a^{1}}(\mathcal{H}))...),
\end{equation}
 is a finite dimensional canonical submanifold of $\jinf(M,N)$ and $V_{F_i},V_{G_j} \in T\mathcal{K}$.
\end{theorem}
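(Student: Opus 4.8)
The plan is to verify the two required properties of $\mathcal{K}$ separately: first that $\mathcal{K}$ is a finite dimensional canonical submanifold of $\jinf(M,N)$, and then that all the relevant evolution vector fields are tangent to it. For the first property I would argue that, because the flows $\Phi^j_{a^j}$ are generated by the characteristic vector fields $\bar V_{G_j} = V_{G_j} - \sum_i h^i_j D_i$, the image of a canonical submanifold under $\Phi^j_{a^j}$ is again canonical. The key point is that $\bar V_{G_j}$ differs from the evolution vector field $V_{G_j}$ (which commutes with all $D_i$) only by a combination of the $D_i$ themselves, so the flow of $\bar V_{G_j}$ maps the Cartan distribution $\mathcal{C}$ into itself; hence it carries canonical submanifolds (those whose tangent space contains $\mathcal{C}$, per Definition \ref{definition canonical_submanifold}) to canonical submanifolds. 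Dimension is preserved because $\Phi^j_{a^j}$ is, for each fixed $a^j$ in a small interval, a diffeomorphism of $\jinf(M,N)$ onto an open subset of itself (restricting to a finite jet level thanks to the common filtration hypothesis, which guarantees $\bar V_{G_j}(\mathfrak{G}_k)\subset\mathfrak{G}_k$ so the flow acts on the finite dimensional $J^k$). Applying the flows successively, $\mathcal{K}$ is a union over the $h$-dimensional parameter set $\mathcal{V}$ of copies of the $\dim\mathcal{H}$-dimensional canonical manifold $\mathcal{H}$; the transversality assumption (the matrix $(\bar V_{G_i}(f^{r_j}))$ has maximal rank) ensures these copies fit together into a submanifold of dimension $\dim\mathcal{H}+h$, rather than overlapping, so $\mathcal{K}$ is genuinely a finite dimensional submanifold, and being a union of canonical submanifolds moved by $\mathcal{C}$-preserving maps, it is canonical.

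Next I would check $V_{G_j}\in T\mathcal{K}$ for each $j$. Here the idea is that each $\Phi^j_{a^j}$-flow line lies in $\mathcal{K}$ by construction, so $\bar V_{G_h}$ is tangent to $\mathcal{K}$ at least along the last flow direction; to handle $\bar V_{G_j}$ for $j<h$ one uses the Lie algebra structure. Since $G_1,\dots,G_h$ generate a finite dimensional Lie subalgebra, the pushforwards $\bold{\Phi}^*_{\bold a}(G_j)$ stay inside the span of the $G_i$'s (with coefficients depending on $\bold a$ but — by Proposition \ref{proposition_lie_algebra} applied to the constant structure constants — behaving well under the $D_i$), and this closure property is exactly what is needed to propagate tangency of $\bar V_{G_j}$ from $\mathcal{H}$ across all the flows into $\mathcal{K}$. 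Concretely, one shows that at a point $\bold\Phi_{\bold a}(p)$ with $p\in\mathcal{H}$, the subspace $\spann\{\bar V_{G_1},\dots,\bar V_{G_h}\}$ together with $\mathcal{C}$ equals $\spann\{\partial_{a^1},\dots,\partial_{a^h}\}\oplus T\mathcal{H}$ pushed forward, which is $T_{\bold\Phi_{\bold a}(p)}\mathcal{K}$. Having $\bar V_{G_j}\in T\mathcal{K}$ and $\mathcal{C}\subset T\mathcal{K}$ (canonicity) immediately gives $V_{G_j} = \bar V_{G_j} + \sum_i h^i_j D_i \in T\mathcal{K}$.

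Finally, for $V_{F_i}\in T\mathcal{K}$: we know $V_{F_i}\in T\mathcal{H}$ by hypothesis, so it suffices to show tangency is preserved under the flows $\Phi^j$. Because $F_1,\dots,F_l,G_1,\dots,G_h$ together form a finite dimensional Lie algebra, the pushforward $\bold\Phi^*_{\bold a}(F_i)$ is a linear combination (with $\bold a$-dependent coefficients) of $F_1,\dots,F_l,G_1,\dots,G_h$. Since the $\bar V_{G_j}$-flows already preserve $\mathcal{H}$-compatible tangency in the $G$ directions, and the $F$-part only mixes in extra $G$ terms which are themselves tangent to $\mathcal{K}$, one concludes $V_{F_i}\in T\mathcal{K}$ on all of $\mathcal{K}$. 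I expect the main obstacle to be the bookkeeping in this last step — namely making rigorous, using the common filtration and the explicit action $\Phi_a^*(F)(x,u,u_\sigma)=F(\Phi_a^{x}, \Phi_a^{u},\dots)$ together with $\partial_a\Phi^*_a(F)=\Phi^*_a(\bar V_{G}(F))$, that the pushforward of an evolution vector field under a characteristic flow is again (modulo $\mathcal{C}$) an evolution vector field whose generator lies in the finite dimensional Lie algebra, so that the whole $(h+l)$-dimensional module of evolution vector fields is transported consistently onto $T\mathcal{K}$. The transversality and the dimension count $\dim(\spann\{V_F,V_{G_1},\dots,V_{G_h}\})=h+l$ are what prevent degeneracies in this transport and guarantee $\mathcal{K}$ has the expected dimension $\dim\mathcal{H}+h$.
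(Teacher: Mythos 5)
First, a caveat: the paper itself gives no proof of Theorem \ref{theorem_main1} --- its ``proof'' is the single line deferring to \cite{DMevolution} --- so your proposal can only be judged against the intended argument, not a written one. With that understood, your outline is the natural route and uses exactly the ingredients this paper imports from \cite{DMevolution}: the characteristic flows send the Cartan distribution into itself (this is the fact $\Phi_{\mathbf{b},*}(D_j)=B^k_jD_k$ quoted from Theorem 7.1 of \cite{DMevolution} inside the proof of Lemma \ref{lemma_algorithm1}), hence carry canonical submanifolds to canonical submanifolds; transversality makes $(\mathbf{a},p)\mapsto\Phi^h_{a^h}(\cdots\Phi^1_{a^1}(p)\cdots)$ an immersion for small $\mathbf{a}$, so $\mathcal{K}$ is canonical of dimension $\dim\mathcal{H}+h$; and tangency of $V_{G_j}$, then of $V_{F_i}$, follows by transporting evolution fields through the flows, the transport being, modulo $\mathcal{C}$, the adjoint action of the finite dimensional Lie algebra, so that everything lands in $\Phi_{\mathbf{a},*}(T\mathcal{H})+\spann\{V_{G_j}\}+\mathcal{C}\subset T\mathcal{K}$.

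Three points need repair, one substantive. (i) Your transport of $V_{F_i}$ closes only because the Lie algebra generated by $F_1,\dots,F_l,G_1,\dots,G_h$ coincides with their $(h+l)$-dimensional span (this is the content of the hypothesis $\dim(\spann\{V_{F},V_{G_1},\dots,V_{G_h}\})=h+l$). If iterated brackets such as $[F_i,G_j]$ produced elements outside that span, they need not be tangent to $\mathcal{H}$ and your final step stalls; this is exactly what the extra hypothesis $L|_{\mathcal{H}}\subset T\mathcal{H}\oplus\spann\{V_{G_1},\dots,V_{G_h}\}$ of Theorem \ref{theorem_main2} compensates for, and you should say explicitly where you use it. Relatedly, the key technical lemma is only asserted: since $[\bar{V}_{G_j},V_{F}]=V_{[G_j,F]}+V_{F}(h^i_j)D_i$, the correction is a Cartan term, and it is only because pushforwards of Cartan fields remain Cartan that the transported field is, modulo $\mathcal{C}$, a constant-coefficient (adjoint-representation) combination of the original generators; proving this, and using the common filtration to make the flows and these ODE-in-$a$ manipulations legitimate on finite dimensional quotients, is where essentially all the work of \cite{DMevolution} lies. (ii) The identification ``$\spann\{\bar{V}_{G_1},\dots,\bar{V}_{G_h}\}$ together with $\mathcal{C}$ equals $T\mathcal{H}\oplus\spann\{\partial_{a^1},\dots,\partial_{a^h}\}$ pushed forward'' is an overstatement: only the inclusion in $T\mathcal{K}$ is true (and is all you need), and the flow directions for $j<h$ are pushforwards of $\bar{V}_{G_j}$ at intermediate points, so identifying them with the $V_{G_j}$ at the endpoint already requires the adjoint-action step, not just the construction of $\mathcal{K}$. (iii) The appeal to Proposition \ref{proposition_lie_algebra} is misplaced: that is the necessary-condition direction; what you need is merely that the structure constants are constants, which is part of the Lie algebra hypothesis.
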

\begin{proof}
The proof can be found in \cite{DMevolution}.
\hfill \end{proof}

\begin{theorem}\label{theorem_main2}
In the hypotheses and with the notations of Theorem \ref{theorem_main1} except that $F_1,...,F_l,G_1,...,G_h$ generate a finite dimensional Lie algebra, if $F_i,G_j$ are real analytic,  $\mathcal{H}$ is defined by real analytic functions and, denoting by $L=\langle  F_1,...,F_l, G_1, \ldots, G_h \rangle$ the Lie algebra generated by $F_i$ and $G_j$, we have
$$L|_{\mathcal{H}}  \subset T\mathcal{H} \oplus \spann\{V_{G_1},...,V_{G_h}\},$$
then $V_{F_i},V_{G_j} \in T\mathcal{K}$.
\end{theorem}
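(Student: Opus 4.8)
The plan is to establish a Frobenius‑type tangency statement directly on a single finitely generated $\mathfrak{F}$‑module of vector fields that is preserved by the characteristic flows, and then to read off $T\mathcal{K}$. As in Theorem \ref{theorem_main1}, write $\bar{V}_{G_j}=V_{G_j}-\sum_i h^i_j D_i$ for the characteristic vector field of $G_j$ (with $h^i_j\in\mathfrak{F}$), let $\Phi^j_a$ be its flow, and set $\bold{\Phi}_{\bold{a}}=\Phi^h_{a^h}\circ\cdots\circ\Phi^1_{a^1}$, so that $\mathcal{K}$ is the image of $\Theta:\mathcal{V}\times\mathcal{H}\rightarrow\jinf(M,N)$, $\Theta(\bold{a},p)=\bold{\Phi}_{\bold{a}}(p)$. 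Let $\mathcal{E}$ be the $\mathfrak{F}$‑module generated by $D_1,\dots,D_m$ together with all evolution vector fields $V_H$, $H\in L$. A direct bracket computation shows $\mathcal{E}$ is preserved by $[\bar{V}_{G_j},\cdot\,]$: indeed $[D_i,V_H]=0$, $[D_i,\bar{V}_{G_j}]=-D_i(h^k_j)D_k\in\mathcal{C}$, $[V_H,V_{H'}]=V_{[H,H']}$ with $[H,H']\in L$ since $L$ is a Lie algebra, and $[\bar{V}_{G_j},V_H]=V_{[G_j,H]}-V_H(h^i_j)D_i\in\mathcal{E}$ because $G_j\in L$; the Leibniz rule for the $\mathfrak{F}$‑coefficients handles the general case, and in particular $\mathcal{E}$ is formally integrable. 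Hence for $W\in\mathcal{E}$ the field $Y(a)=(\Phi^j_a)^{*}W$ solves the linear transport equation $Y'(a)=[\bar{V}_{G_j},Y(a)]$, $Y(0)=W$, whose solution stays in $\mathcal{E}$; this is the step where the real analyticity of $F_i,G_j$ and of $\mathcal{H}$ and the common filtration of $G_1,\dots,G_h$ are used, guaranteeing that the formal exponential of $\operatorname{ad}_{\bar{V}_{G_j}}$ actually produces vector fields with coefficients in $\mathfrak{F}$, and here I would lean on the flow machinery of \cite{DMevolution}. Composing the $h$ flows, $\bold{\Phi}^{*}_{\bold{a}}(V_H)\in\mathcal{E}$ for every $H\in L$, and similarly $(\Phi^{j-1}_{a^{j-1}}\circ\cdots\circ\Phi^1_{a^1})^{*}\bar{V}_{G_j}\in\mathcal{E}$.

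Next I would identify $T\mathcal{K}$. Differentiating $\Theta$ at $(\bold{a},p)$ and pulling the result back by $(\bold{\Phi}_{\bold{a}})^{-1}_{*}$, using that each $\Phi^j_a$ fixes its own generator $\bar{V}_{G_j}$, one obtains for $q=\bold{\Phi}_{\bold{a}}(p)$, $p\in\mathcal{H}$,
$$(\bold{\Phi}_{\bold{a}})^{-1}_{*}(T_q\mathcal{K})=T_p\mathcal{H}+\spann\{\,[(\Phi^{j-1}_{a^{j-1}}\circ\cdots\circ\Phi^1_{a^1})^{*}\bar{V}_{G_j}](p)\ :\ j=1,\dots,h\,\}.$$
By the first paragraph every spanning vector lies in $\mathcal{E}|_p$, while the hypothesis $L|_{\mathcal{H}}\subset T\mathcal{H}\oplus\spann\{V_{G_1},\dots,V_{G_h}\}$ together with $\mathcal{C}\subset T\mathcal{H}$ (as $\mathcal{H}$ is canonical) gives $\mathcal{E}|_p\subseteq T_p\mathcal{H}+\spann\{\bar{V}_{G_1}(p),\dots,\bar{V}_{G_h}(p)\}$. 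Shrinking $\mathcal{V}$ so that $\Theta$ is an immersion — legitimate because $V_{G_1},\dots,V_{G_h}$ are transversal to $\mathcal{H}$ — both $(\bold{\Phi}_{\bold{a}})^{-1}_{*}(T_q\mathcal{K})$ and $T_p\mathcal{H}+\spann\{\bar{V}_{G_1}(p),\dots,\bar{V}_{G_h}(p)\}$ have dimension $\dim\mathcal{H}+h$; since the former is contained in the latter they coincide, whence $\mathcal{E}|_p\subseteq(\bold{\Phi}_{\bold{a}})^{-1}_{*}(T_q\mathcal{K})$. Now $(\bold{\Phi}_{\bold{a}})^{-1}_{*}(V_{F_i}(q))=[\bold{\Phi}^{*}_{\bold{a}}(V_{F_i})](p)\in\mathcal{E}|_p\subseteq(\bold{\Phi}_{\bold{a}})^{-1}_{*}(T_q\mathcal{K})$, so $V_{F_i}(q)\in T_q\mathcal{K}$, and the identical argument for $V_{G_j}$ (also an evolution vector field from $L$) gives $V_{G_j}(q)\in T_q\mathcal{K}$. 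Letting $q$ range over $\mathcal{K}$ yields $V_{F_i},V_{G_j}\in T\mathcal{K}$. (That $\mathcal{K}$ is again a finite dimensional canonical submanifold follows as in Theorem \ref{theorem_main1}: $[\bar{V}_{G_j},\mathcal{C}]\subset\mathcal{C}$ forces the flows to preserve $\mathcal{C}$, so $\mathcal{C}=(\bold{\Phi}_{\bold{a}})_{*}\mathcal{C}\subseteq(\bold{\Phi}_{\bold{a}})_{*}(T\mathcal{H})\subseteq T\mathcal{K}$, and finite‑dimensionality is the immersivity of $\Theta$.)

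The step I expect to be the genuine obstacle is the assertion that $\bold{\Phi}^{*}_{\bold{a}}$ maps the module $\mathcal{E}$ into itself. Formally this is nothing but exponentiating $\operatorname{ad}_{\bar{V}_{G_j}}$, but on the infinite dimensional $\jinf(M,N)$ one must know the associated linear transport problem has a genuine solution consisting of vector fields with coefficients in $\mathfrak{F}$; this is precisely where the real analyticity of the $F_i,G_j$ and of the defining functions of $\mathcal{H}$, together with the common filtration of $G_1,\dots,G_h$, enter the picture — they play the role that the Lie subalgebra hypothesis on $G_1,\dots,G_h$ plays in the smooth Theorem \ref{theorem_main1} — and for that ingredient I would invoke \cite{DMevolution} rather than reprove it. Once it is granted, the remaining pieces (the bracket identities, the formula for $(\bold{\Phi}_{\bold{a}})^{-1}_{*}(T_q\mathcal{K})$, and the dimension count) are routine.
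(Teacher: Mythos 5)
The paper does not actually prove Theorem \ref{theorem_main2}: as with Theorem \ref{theorem_main1}, the proof is delegated wholesale to \cite{DMevolution}, so there is no in-paper argument to measure you against. Judged on its own terms, your architecture is the expected analytic (Nagano-type) Frobenius argument: pull everything back through the characteristic flows, keep the iterated brackets $\operatorname{ad}_{\bar V_{G_{j_1}}}\cdots\operatorname{ad}_{\bar V_{G_{j_k}}}(V_H)$ inside the module generated by $\mathcal{C}$ and $\{V_H:H\in L\}$, evaluate on $\mathcal{H}$ where the hypothesis $L|_{\mathcal H}\subset T\mathcal H\oplus\spann\{V_{G_1},\dots,V_{G_h}\}$ confines everything to the finite dimensional space $T_p\mathcal H+\spann\{\bar V_{G_1}(p),\dots,\bar V_{G_h}(p)\}$, and close with the transversality/dimension count. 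The bracket identities are correct (up to an irrelevant sign: $[\bar V_{G_j},V_H]=V_{[G_j,H]}+V_H(h^i_j)D_i$), the identification of $(\bold{\Phi}_{\bold a})^{-1}_{*}(T_q\mathcal K)$ is correct, and the final deduction $V_{F_i}(q)\in T_q\mathcal K$ is clean.

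The one step that is genuinely softer than it reads is the assertion that the solution of $Y'(a)=[\bar V_{G_j},Y(a)]$ \emph{stays in} $\mathcal E$. Unlike in Theorem \ref{theorem_main1}, here $L$ may be infinite dimensional, so $\mathcal E$ is an infinitely generated $\mathfrak F$-module of vector fields on $\jinf(M,N)$; a linear ODE in an infinite dimensional space does not automatically respect such a subspace, and this is exactly the point where a merely smooth hypothesis would break. What saves the argument is not invariance of the module as a whole but the pointwise statement at $p\in\mathcal H$: by real analyticity, $[\bold\Phi^{*}_{\bold a}(V_H)](p)$ is the coordinatewise-convergent sum of its Taylor coefficients in $\bold a$, each coefficient being an iterated bracket that lies in $\mathcal E$ and hence, evaluated at $p$, in the \emph{fixed finite dimensional} (therefore closed) subspace $T_p\mathcal H+\spann\{\bar V_{G_j}(p)\}$; the limit then remains there. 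You gesture at precisely this (the formal exponential of $\operatorname{ad}_{\bar V_{G_j}}$, the role of analyticity and of the common filtration) and defer the convergence to \cite{DMevolution}, which is a fair division of labour given that the paper defers everything; but for a self-contained proof that convergence-plus-closedness step is the actual content, and it is cleaner to run it pointwise on $\mathcal H$ (or, equivalently, to show that all $\bold a$-derivatives of $a\mapsto V_{F_i}(f)(\bold\Phi_{\bold a}(p))$ vanish at $\bold a=0$ for a real analytic defining function $f$ of $\mathcal K$) than at the level of the module $\mathcal E$.
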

\begin{proof}
The proof can be found in \cite{DMevolution}.
 \hfill\end{proof}

\begin{remark}
If $\bar{V}_{G_i},V_F$ are real analytic and $\mathcal{H}$ is defined by real analytic equations, Theorem \ref{theorem_main1} implies Theorem \ref{theorem_main2}. On the other hand  Theorem \ref{theorem_main1} turns out to  be very useful  when we consider  smooth (not analytic) invariant manifolds $\mathcal{H}$.
\end{remark}

\begin{remark}
It is important to note that Theorems \ref{theorem_main1} and \ref{theorem_main2} hold also if $\mathcal{H}$ is a manifold with boundary.
In this case if $V_{G_{1}},...,V_{G_{h}} \in T(\partial\mathcal{H} )$  we obtain that $\mathcal{K}$ is also a local manifold with boundary.
\end{remark}

\section{A general algorithm to compute solutions to SPDEs}\label{section_general_algorithm}

In this section, starting from Theorem \ref{theorem_main1} and Theorem \ref{theorem_main2},  we provide a general algorithm to explicitly compute the finite dimensional solution to an SPDE. The main tool is   the introduction of a special coordinate system on the manifold $\mathcal{K}$ which permits to avoid most of the computational  problems in $\jinf(M,N)$. \\

Given an invariant submanifold $\mathcal{H}$ such that $V_{F_1},...,V_{F_l} \in T\mathcal{H}$, we have to  compute the characteristic flows $\Phi^1_{a^1},...,\Phi^h_{a^h}$ of $G_1,...,G_h$  in order to obtain $\mathcal{K}=\bold{\Phi}_{(a^1,...,a^h)}(\mathcal{H})$. Once we have $\mathcal{K}$, which by Theorem \ref{theorem_main1} and Theorem \ref{theorem_main2} is a finite dimensional solution to the SPDE defined by $F_1,...,F_l,G_1,...,G_h$, we can choose a coordinate system on $\mathcal{K}$ of the form $x^1,...,x^l,y^1,..,y^k$ and compute the explicit expressions for the vector fields $V_{F_1},...,V_{F_l},V_{G_1},...,V_{G_h}$ and $D_1,...,D_m$ in the coordinate system $(x,y)$. Finally, by solving   equations \refeqn{equation_solve1} and \refeqn{equation_solve2}, we obtain the explicit solution to the original SPDE. \\
In the general case it is not possible to explicitly perform all the described steps, so that it is not  possible to explicitly reduce the SPDE to a finite dimensional SDE.
Despite this fact, there are at least two cases where this reduction can be done:

\begin{itemize}
\item Case 1: the SPDE is defined by some functions $G_1,...,G_h$  admitting characteristics and  forming a finite dimensional Lie algebra \item
Case 2: the SPDE is defined by a function $F$ which does not admit characteristics and some functions $G_1,...,G_r$ which admit characteristics.
\end{itemize}

Furthermore, in order to explicitly compute the solution, we require two additional hypotheses
\begin{itemize}
\item the characteristics of $G_1,...,G_h$ admit a common filtration $\mathfrak{G}_0$ and the characteristic flow of $G_1,...,G_h$ can be explicitly computed,
\item we are able to solve the equation
$$\partial_a(f(x,a))=F(x,f(x,a),\partial^{\sigma}(f(x,a)))$$
for all $a \geq 0$ and for some initial condition $f(x,0)=f_0(x) \in \cinf(M,N)$.
\end{itemize}

All the previous hypotheses are generally satisfied  in the literature of finite dimensional solutions to SPDEs and they hold for all the examples in Section \ref{section_examplesbis} (the only exception is the second part of Section \ref{subsection_filtering}, where we consider an SPDE such that  Theorem \ref{theorem_main1} and Theorem \ref{theorem_main2} do not apply).

In Case 1 the first step consists in choosing the manifold $\mathcal{H}$ as the zeros of the following functions
$$h^i_{\sigma}=u^i_{\sigma}-\partial^{\sigma}(f^i(x)),$$
where $f^i \in \cinf(M,N)$. It is easy to check that $\mathcal{H}$ is a canonical submanifold of $\jinf(M,N)$, since
$$T\mathcal{H}=\spann\{D_1,...,D_m\}.$$
In order to apply Theorem \ref{theorem_main1} we need that, for any $x_0 \in M$, there exists a set of $h$  multi-indices
$\sigma^1,...,\sigma^h$ and of indices $i^1,...,i^h \in \{1,...,n\}$ such that
\begin{equation}\label{equation_degenerate}
\left(
\begin{array}{ccc}
D^{\sigma_1}(G^{i_1}_1)(x,f(x),\partial^{\tau}(f)(x)) & ... & D^{\sigma^h}(G^{i_1}_h)(x,f(x),\partial^{\tau}(f)(x))\\
... & ... & ...\\
D^{\sigma_h}(G^{i_h}_1)(x,f(x),\partial^{\tau}(f)(x)) & ... &D^{\sigma^h}(G^{i_h}_h)(x,f(x),\partial^{\tau}(f)(x))\end{array} \right)
\end{equation} has maximal rank. If $f^1,...,f^r$ are   real analytic functions, it is enough to check that  previous condition holds in one point or for generic smooth functions
$f^1,...,f^r$.\\
Under this hypothesis, we define the manifold $\mathcal{K}$ as in Theorem \ref{theorem_main1}
$$\mathcal{K}=\bigcup_{\bold{a} \in \mathcal{V}} \Phi^h_{a^h}(...(\Phi^{1}_{a^{1}}(\mathcal{H}))...).$$
This means that, for any $\bold{a}=(a_1,...,a_h)$ in a suitable neighborhood of the origin of $\mathbb{R}^h$,   $\mathcal{K}$ is the set of all the points $p=(x,u,u_{\sigma}) \in \jinf(M,N)$ such that there exist $(a^1_p,...,a^h_p) \in \mathbb{R}^h$ satisfying
\begin{equation}\label{equation_algorithm1}
\bold{\Phi}^*_{(a^1_p,...,a^h_p)}(u^i_{\sigma})-\partial^{\sigma}(f^j)(\bold{\Phi}^x_{(a^1_p,...,a^h_p)}(x,u,u_{\sigma}))=0.
\end{equation}
We define a special set of functions, which we still denote  by $a^1(x,u,u_{\sigma}),...,a^h(x,u,u_{\sigma})$, satisfying
\begin{equation}\label{equation_algorithm2}
\bold{\Phi}^*_{(a^1(x,u,u_{\sigma}),...,a^h(x,u,u_{\sigma}))}(u^{i_k}_{\sigma^{k}})-
\partial^{\sigma^k}(f^{i_k})(\bold{\Phi}^x_{(a^1(x,u,u_{\sigma}),...,a^h(x,u,u_{\sigma}))}(x,u,u_{\sigma})=0.
\end{equation}
Hereafter, in order to avoid  confusion, we write $a^i$ only for the functions defined by equation \refeqn{equation_algorithm2}, while we use other letters, for example $\bold{b}=(b^1,...,b^h)$  to describe  the flow $\bold{\Phi}$ evaluated at some fixed  $\mathbf{b}\in \mathbb{R}^h$.   \\
Our regularity assumption on the matrix \eqref{equation_degenerate}
ensures that equation \refeqn{equation_algorithm1} has a unique local solution in a neighborhood of
$\pi^{-1}(x_0)$ and the functions $x^1,...,x^n,a^1,...,a^h$ provide  a local coordinate system for
$\mathcal{K}$ in a neighborhood of $\pi^{-1}(x_0)$. Indeed, using  \refeqn{equation_algorithm2},  we have that $\mathcal{K}$ is the set of zeros of
\begin{equation}\label{equation_K}
\left.\bold{\Phi}^*_{(a^1(x,u,u_{\sigma)},...,a^h(x,u,u_{\sigma}))}(u^i_{\sigma})-\partial^{\sigma}(f^i)(\bold{\Phi}^x_{(a^1(x,u,u_{\sigma)},...,a^h(x,u,u_{\sigma}))}(x,u,u_{\sigma}))\right|_{\mathcal{K}}=0.
\end{equation}
In this coordinate system the vector fields $V_{G_1},...,V_{G_h}$ have a special form, as showed  by the following

\begin{theorem}\label{theorem_algorithm}
The vector fields $V_{G_1}|_{\mathcal{K}},...,V_{G_h}|_{\mathcal{K}}$ satisfy the relations
$$V_{G_i}=\phi^l_i(a^1,...,a^h)\partial_{a^l},$$
and the smooth functions $\phi^l_i$ are such that
\begin{equation}\label{equation_algorithm3}
\phi^l_i(a^1,...,a^h)=-\delta^l_i \ \ \  \text{when }a^1,...,a^{i-1}=0
\end{equation}
\begin{equation}\label{equation_algorithm4}
\partial_{a^k}(\phi^l_i(a^1,...,a^h))=-\lambda_{k,i}^p \phi^l_p-\sum_{r > k} \phi^r_i \partial_{a^r}(\phi^l_k(a^1,...,a^h))
\ \ \ \text{when }a^1,...,a^{k-1}=0\text{ and }k \geq i,
\end{equation}
where
$$[G_i,G_j]=\lambda^k_{i,j}G_k.$$
\end{theorem}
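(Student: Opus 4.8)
The plan is to exploit the explicit description \refeqn{equation_K} of $\mathcal{K}$ together with the group-like composition properties of the characteristic flows $\Phi^i_{a^i}$. First I would establish that $V_{G_i}|_{\mathcal{K}}$ is vertical along the fibres of $\pi:\mathcal{K}\to M$ and tangent to $\mathcal{K}$ (this is part of the conclusion of Theorem \ref{theorem_main1}, which applies by hypothesis), hence in the coordinate system $(x^1,\dots,x^m,a^1,\dots,a^h)$ it has the form $V_{G_i}=\phi^l_i\,\partial_{a^l}$; moreover, since $[D_{x^j},V_{G_i}]=0$ and $\bold{\Phi}_*(\partial_{x^j})=D_{x^j}$ forces the coefficients to be $x$-independent, we get $\phi^l_i=\phi^l_i(a^1,\dots,a^h)$. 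The heart of the argument is then to read off the two identities \refeqn{equation_algorithm3} and \refeqn{equation_algorithm4} from the way $V_{G_i}$ acts on the defining functions of $\mathcal{K}$.

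For \refeqn{equation_algorithm3}: on the slice $a^1=\dots=a^{i-1}=0$, a point of $\mathcal{K}$ is obtained by applying only $\Phi^h_{a^h}\circ\cdots\circ\Phi^i_{a^i}$ to $\mathcal{H}$. Since the flows of distinct $\bar V_{G_j}$ need not commute, I would isolate the action of $V_{G_i}$ there by differentiating the relation $\bold{\Phi}^*_{(0,\dots,0,a^i,\dots,a^h)}(u^{i_k}_{\sigma^k})-\partial^{\sigma^k}(f^{i_k})(\bold{\Phi}^x_{(\dots)})=0$ along $\bar V_{G_i}=V_{G_i}-\sum_j h^j_i D_j$ using $\partial_a(\Phi^*_a(F))=\Phi^*_a(\bar V_{G_i}(F))$; because $\bar V_{G_i}$ is the first flow applied on that slice, the $a^i$-derivative of the coordinate function $a^i$ along $V_{G_i}$ is $-1$ (the minus sign coming from the pull-back convention $\bold{\Phi}^*$ versus the push-forward of the parameter, as already visible in the worked example $F=uu_x$ where $\Phi_a^x=x-au$), while $a^l$ for $l\ne i$ is unmoved, giving $\phi^l_i=-\delta^l_i$ on that slice.

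For \refeqn{equation_algorithm4}: I would differentiate the commutation relation $[V_{G_k},V_{G_i}]=V_{[G_k,G_i]}=\lambda^p_{k,i}V_{G_p}$, restricted to $\mathcal{K}$, in the coordinates $(x,a)$. Writing $V_{G_i}=\phi^l_i\partial_{a^l}$ and $V_{G_k}=\phi^r_k\partial_{a^r}$, the bracket expands to $(\phi^r_k\partial_{a^r}\phi^l_i-\phi^r_i\partial_{a^r}\phi^l_k)\partial_{a^l}=\lambda^p_{k,i}\phi^l_p\partial_{a^l}$. Evaluating on the slice $a^1=\dots=a^{k-1}=0$ with $k\ge i$, the already-proved relation \refeqn{equation_algorithm3} gives $\phi^r_k=-\delta^r_k$ there, so $\phi^r_k\partial_{a^r}\phi^l_i=-\partial_{a^k}\phi^l_i$, and rearranging yields exactly \refeqn{equation_algorithm4}; the sum $\sum_{r>k}\phi^r_i\partial_{a^r}\phi^l_k$ survives because only the components $\phi^r_k$ with $r\le k$ are pinned down on that slice (again by \refeqn{equation_algorithm3} applied to $V_{G_k}$), while for $r>k$ one must keep the genuine derivative $\partial_{a^r}\phi^l_k$.

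The main obstacle I expect is bookkeeping the non-commutativity of the flows: one must be careful that on the slice $a^1=\dots=a^{k-1}=0$ the nested composition $\bold{\Phi}_{\bold a}$ reduces to $\Phi^h_{a^h}\circ\cdots\circ\Phi^k_{a^k}$ so that $\bar V_{G_k}$ is the outermost generator and \refeqn{equation_algorithm3} can legitimately be invoked for $\phi^r_k$, $r\le k$, but not for $r>k$; and one must track the sign convention relating the flow parameter to the coordinate $a^i$ consistently throughout. Once these conventions are fixed, the two formulas follow by the differentiations sketched above, and the fact that the $\phi^l_i$ are smooth is inherited from smoothness of the characteristic flows $\Phi^i_{a^i}$ and of the functions $a^i(x,u,u_\sigma)$ defined by \refeqn{equation_algorithm2} via the implicit function theorem applied to the maximal-rank matrix \refeqn{equation_degenerate}.
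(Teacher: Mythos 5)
Your overall route is the same as the paper's: you prove \refeqn{equation_algorithm3} by differentiating the defining relations \refeqn{equation_algorithm2} along the characteristic field $\bar{V}_{G_i}$, using that on the slice $b^1=\cdots=b^{i-1}=0$ the composed pullback $\Phi^{1*}_{b^1}(\cdots(\Phi^{h*}_{b^h}(\cdot))\cdots)$ has $\Phi^{i*}_{b^i}$ as its outermost factor so that $\partial_{b^i}$ of it equals $\bar{V}_{G_i}$ of it, and you obtain \refeqn{equation_algorithm4} from the coordinate expansion of $[V_{G_k},V_{G_i}]=\lambda^p_{k,i}V_{G_p}$ combined with \refeqn{equation_algorithm3}. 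However, there are two genuine gaps. First, the differentiation along $\bar{V}_{G_i}$ only controls $\bar{V}_{G_i}(a^j)$, whereas the coefficient you need is $\phi^j_i=V_{G_i}(a^j)$; since $\bar{V}_{G_i}=V_{G_i}-h^lD_l$, you must know that $D_l(a^j)|_{\mathcal{K}}=0$. This is exactly Lemma \ref{lemma_algorithm1}, which is not automatic: its proof uses the pushforward property $\Phi^h_{b^h,*}(\cdots(\Phi^1_{b^1,*}(D_j))\cdots)=B^k_jD_k$ from \cite{DMevolution}, the full family of relations \refeqn{equation_K} (not just the $h$ chosen components), and the nondegeneracy of the matrix $\partial_{b^j}$ of the pulled-back defining functions. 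You never state or prove this fact, yet you rely on it twice: also your claim that $[D_{x^j},V_{G_i}]=0$ forces $x$-independence of $\phi^l_i$ is valid only once you know that $D_{x^j}|_{\mathcal{K}}=\partial_{x^j}$ in the $(x,a)$ coordinates, i.e.\ again $D_{x^j}(a^l)|_{\mathcal{K}}=0$ (the paper instead gets $x$-independence from the fact that \refeqn{equation_algorithm3}--\refeqn{equation_algorithm4} determine the $\phi^l_i$ as functions of the $a$'s alone).

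Second, the bookkeeping in your derivation of \refeqn{equation_algorithm4} does not hold up. Equation \refeqn{equation_algorithm3} applied to the index $k$ pins down \emph{all} components $\phi^r_k=-\delta^r_k$ on the slice $a^1=\cdots=a^{k-1}=0$, not only those with $r\le k$; consequently the tangential derivatives $\partial_{a^r}(\phi^l_k)$ with $r\ge k$ vanish there, and the terms that cannot be discarded without further argument are the transversal ones, $r<k$. Carrying out your bracket computation correctly on that slice gives
$\partial_{a^k}(\phi^l_i)=-\lambda^p_{k,i}\phi^l_p-\sum_{r<k}\phi^r_i\,\partial_{a^r}(\phi^l_k)$,
not the formula with $\sum_{r>k}$ that you assert, and your stated reason for keeping exactly the $r>k$ terms (\virgolette{only $\phi^r_k$ with $r\le k$ are pinned down}) misreads \refeqn{equation_algorithm3}. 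Admittedly the index conventions in the statement itself are delicate (the worked example with $G_1=1$, $G_2=u$, $G_3=u^2$ applies \refeqn{equation_algorithm4} with $k\le i$ rather than $k\ge i$, and there the extra transversal terms happen to vanish because $\phi^1_3=-(a^1)^2$ vanishes on $a^1=0$), so part of the mismatch lies in the statement; but as written your final step is not a valid derivation of the relation you claim, and reconciling the bracket identity with the stated form of \refeqn{equation_algorithm4} --- e.g.\ showing that the $r<k$ contributions vanish on the slice --- is precisely the point that still requires an argument.
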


\begin{lemma}\label{lemma_algorithm1}
If $a^1,...,a^h$ are defined by \refeqn{equation_algorithm2},  then
$$D_i(a^j)|_{\mathcal{K}}=0.$$
\end{lemma}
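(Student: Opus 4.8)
The plan is to differentiate the defining relations \refeqn{equation_algorithm2} along $D_i$ and to exploit the fact that the characteristic flows preserve the Cartan distribution $\mathcal{C}$, so that the resulting linear system for $D_i(a^1),\dots,D_i(a^h)$ becomes homogeneous with a nonsingular coefficient matrix. First I would introduce, for $k=1,\dots,h$ and parameters $\bold{b}=(b^1,\dots,b^h)$ near $0$, the functions
$$\Theta^k(x,u,u_\sigma;\bold{b}):=\bold{\Phi}^*_{\bold{b}}(u^{i_k}_{\sigma^k})-\partial^{\sigma^k}(f^{i_k})(\bold{\Phi}^x_{\bold{b}}(x,u,u_\sigma))=h^{i_k}_{\sigma^k}\big(\bold{\Phi}_{\bold{b}}(x,u,u_\sigma)\big),$$
where $h^i_\sigma=u^i_\sigma-\partial^\sigma(f^i)(x)$ are the functions defining $\mathcal{H}$. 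By the rank assumption on the matrix \refeqn{equation_degenerate} the system $\Theta^k=0$ ($k=1,\dots,h$) has, near $\pi^{-1}(x_0)$, a unique smooth solution $\bold{b}=(a^1(x,u,u_\sigma),\dots,a^h(x,u,u_\sigma))$ defined on a full neighbourhood in $\jinf(M,N)$ (this is where it matters that the $a^j$ are genuine functions, so that $D_i(a^j)$ makes sense before restriction to $\mathcal{K}$), and by construction $\Theta^k(\,\cdot\,;a^1,\dots,a^h)\equiv 0$.

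Applying $D_i$ to this identity and using the chain rule gives, on the same neighbourhood,
$$\sum_{l=1}^h\frac{\partial\Theta^k}{\partial b^l}\,D_i(a^l)=-\left.D_i\big[\bold{\Phi}^*_{\bold{b}}(h^{i_k}_{\sigma^k})\big]\right|_{\bold{b}=(a^1,\dots,a^h)},$$
where on the right the operator $D_i$ acts on the jet coordinates only, the parameters $\bold{b}$ being kept fixed. The $h\times h$ matrix $(\partial\Theta^k/\partial b^l)$ is nonsingular: at $\bold{b}=0$ it equals $(\bar{V}_{G_l}(h^{i_k}_{\sigma^k}))$, which on $\mathcal{H}$ reduces to the matrix \refeqn{equation_degenerate}, hence it stays nonsingular on a suitably small neighbourhood and in particular at the points of $\mathcal{K}$ near $\pi^{-1}(x_0)$. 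Thus everything reduces to showing that the right-hand side vanishes on $\mathcal{K}$.

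For this I would use the structural identity coming from $[D_i,V_{G_j}]=0$ (evolution vector fields commute with the $D_i$) together with $[D_i,D_l]=0$, namely
$$[D_i,\bar{V}_{G_j}]=[D_i,V_{G_j}]-\Big[D_i,\sum_l h^l_j D_l\Big]=-\sum_l D_i(h^l_j)\,D_l\in\mathcal{C}.$$
Hence each characteristic flow $\Phi^j_a$, and therefore every composition $\bold{\Phi}_{\bold{b}}$, pushes $\mathcal{C}$ into itself — this is precisely the mechanism behind the canonicity of $\mathcal{K}$ in Theorem \ref{theorem_main1}, and is proved in \cite{DMevolution} — so that $(\bold{\Phi}_{\bold{b}})_*D_i=\sum_l c^l_i D_l$ for suitable functions $c^l_i$. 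Using $D_i(\bold{\Phi}^*_{\bold{b}}(g))=\bold{\Phi}^*_{\bold{b}}\big((\bold{\Phi}_{\bold{b}})_*D_i(g)\big)$ one then gets
$$D_i\big[\bold{\Phi}^*_{\bold{b}}(h^{i_k}_{\sigma^k})\big]=\sum_l\bold{\Phi}^*_{\bold{b}}(c^l_i)\,\bold{\Phi}^*_{\bold{b}}\big(D_l(h^{i_k}_{\sigma^k})\big).$$
Since $D_l(h^{i_k}_{\sigma^k})=h^{i_k}_{\sigma^k+1_l}$ is again one of the functions defining $\mathcal{H}$, it vanishes on $\mathcal{H}$; and for $p\in\mathcal{K}$ one has $\bold{\Phi}_{(a^1(p),\dots,a^h(p))}(p)\in\mathcal{H}$ by \refeqn{equation_K}. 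Evaluating the last display at such a $p$ with $\bold{b}=(a^1(p),\dots,a^h(p))$ therefore gives $0$, and combined with the invertibility of $(\partial\Theta^k/\partial b^l)$ this forces $D_i(a^l)|_{\mathcal{K}}=0$.

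The main obstacle is the invariance of $\mathcal{C}$ under the characteristic flows: it is what turns the ``frozen'' derivative on the right-hand side into a combination of $\mathcal{C}$-derivatives of the functions defining $\mathcal{H}$, which then vanish on $\mathcal{H}$ and hence, after transporting by $\bold{\Phi}_{(a^1(p),\dots,a^h(p))}$, at the points of $\mathcal{K}$. Everything else is the implicit function theorem, the chain rule, and the bookkeeping of pullbacks.
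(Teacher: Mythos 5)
Your proposal is correct and follows essentially the same route as the paper: differentiate the defining relations along $D_i$, split by the chain rule into a Jacobian-in-$\bold{b}$ term times $D_i(a^l)$ plus a frozen-parameter term, kill the latter on $\mathcal{K}$ using the fact that $\bold{\Phi}_{\bold{b},*}(D_i)$ is a combination of the $D_l$ (the paper cites Theorem 7.1 of \cite{DMevolution} for this, where you sketch the commutator argument behind it) together with $D_l(h^{i_k}_{\sigma^k})$ vanishing on $\mathcal{H}$, and conclude by nonsingularity of the Jacobian coming from the rank condition on \refeqn{equation_degenerate}. The only difference is presentational: you make the implicit-function-theorem step and the invariance of $\mathcal{C}$ explicit where the paper is terse or cites the companion paper.
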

\begin{proof}
Using equations \refeqn{equation_algorithm2} defining the functions $a^i$ we have that
\begin{equation}
\label{equation_lemma_algorithm1}0=D_i(\bold{\Phi}^*_{(a^1,...,a^h)}(h^{i_k}_{\sigma^k}))=
\left.\left[D_i(a^j)\partial_{b^j}(\bold{\Phi}^*_{(b^1,...,b^h)}(h^{i_k}_{\sigma^k}))+D_i(\bold{\Phi}^*_{(b^1,...,b^k)}(h^{i_k}_{\sigma^k}))\right]
\right|_{b^1=a^1,...,b^h=a^h},
\end{equation} where
$$h^{i_k}_{\sigma^k}=u^{i_k}_{\sigma^k}-\partial^{\sigma^k}(f^{i_k})(x).$$
By  Theorem 7.1 of \cite{DMevolution} there exist suitable smooth functions $B^i_j(b^1,...,b^h,x,u,u_{\sigma})$ such that
$$\bold{\Phi}_{(b^1,...,b^h),*}(D_j):=\Phi^h_{b^h,*}(...(\Phi^1_{b^1,*}(D_j))...)=B^k_j D_k.$$
So equations \refeqn{equation_lemma_algorithm1} restricted on $\mathcal{K}$ become
\begin{eqnarray*}
0&=&D_i(a^j)\left.\left[\partial_{b^j}(\bold{\Phi}^*_{(b^1,...,b^k)}(h^{i_k}_{\sigma^k}))\right]\right|_{b^1=a^1,...,b^h=a^h}+\\
&&+\left.\left\{\bold{\Phi}^*_{(b^1,...,b^h)}\left[\bold{\Phi}_{(b^1,...,b^h),*}(D_i)(h^{i_k}_{\sigma^k})\right] \right\}
\right|_{b^1=a^1,...,b^h=a^h},\\
&=&D_i(a^j)\left.\left[\partial_{b^j}(\bold{\Phi}^*_{(b^1,...,b^k)}(h^{i_k}_{\sigma^k}))\right]\right|_{b^1=a^1,...,b^h=a^h}+\\
&&+\left.\left[\bold{\Phi}^*_{(b^1,...,b^h)}(B_i^r)\bold{\Phi}^*_{(b^1,...,b^h)}(D_r(h^{i_{k}}_{\sigma^k}))\right]\right|_{b^1=a^1,...,b^h=a^h}\\
&=&D_i(a^j)\left.\left[\partial_{b^j}(\bold{\Phi}^*_{(b^1,...,b^k)}(h^{i_k}_{\sigma^k}))\right]\right|_{b^1=a^1,...,b^h=a^h},
\end{eqnarray*}
where we use  relations \refeqn{equation_K}. Since
$\left.\left[\partial_{b^j}(\bold{\Phi}^*_{(b^1,...,b^k)}(h^{i_k}_{\sigma^k}))\right]\right|_{b^1=a^1,...,b^h=a^h}$ is nonsingular in a
neighborhood of $x_0$, we have that $D_i(a^j)|_{\mathcal{K}}=0$.
${}\hfill$\end{proof}\\

\noindent \begin{proof}[Proof of Theorem \ref{theorem_algorithm}]
Since $V_{G_i}(x^j)=0$, then $V_{G_i}|_{\mathcal{K}}=\phi^l_i(x,a^1,...,a^h)\partial_{a^i}$. If equations
\refeqn{equation_algorithm3} and \refeqn{equation_algorithm4} hold, then the functions $\phi^l_i$ are independent from $x$. So, in order  to
prove Theorem \ref{theorem_algorithm}, we need only  to prove   \refeqn{equation_algorithm3} and
\refeqn{equation_algorithm4}.\\
By Lemma \ref{lemma_algorithm1} we get
$$0=\bar{V}_{G_i}(\bold{\Phi}^*_{a^1,...,a^h}(h^{i_k}_{\sigma^k}))|_{\mathcal{K}}=\left.\left[V_{G_i}(a^j)\partial_{b^j}
(\bold{\Phi}^*_{(b^1,...,b^h)}(h^{i_k}_{\sigma^k}))+\bar{V}_{G_i}(\bold{\Phi}^*_{(b^1,...,b^h)}(h^{i_k}_{\sigma^k}))\right]\right|_{b^1=a^1,...,b^h=a^h,\mathcal{K}}
.$$ If $b^1,...,b^{i-1}=0$ we have that
$$\bar{V}_{G_i}(\bold{\Phi}^*_{(b^1,...,b^h)}(h^{i_k}_{\sigma^k}))=\partial_{b^i}(\bold{\Phi}^*_{(b^1,...,b^h)}(h^{i_k}_{\sigma^k})),$$
and we obtain  condition \refeqn{equation_algorithm3}. Equation \refeqn{equation_algorithm4} follows using the  definition of Lie brackets between vector fields and the Lie algebra structure of $G_1,...,G_h$
together with equation \refeqn{equation_algorithm3}.
${}\hfill$\end{proof}\\

\noindent Theorem \ref{theorem_algorithm} allows us  to compute the expressions of $V_{G_1},...,V_{G_h}$ on $\mathcal{K}$ in the coordinate system $(x^1,...,x^m,a^1,...,a^h)$, even if we are not able to compute the explicit expression of $a^1,...,a^h$. Indeed, equations \refeqn{equation_algorithm3} and \refeqn{equation_algorithm4} not only uniquely determine the functions $\phi^i_j$, but also permit to get their explicit expressions. In order to show this last assertion we propose here an example that will also be useful in Section \ref{section_examplesbis}.\\
Taking $M=N=\mathbb{R}$, $G_1=1,G_2=u,G_3=u^2$ and considering $\mathcal{H},\mathcal{K}$ as in the previous discussion, we have that
$$[G_1,G_2]=G_1, \ \ \ \ [G_1,G_3]=2G_2, \ \ \ \ [G_2,G_3]=G_3.$$
The equations for $\phi^i_1$ and $\partial_{a^1}(\phi^i_j)$ are
\begin{eqnarray*}
&\phi^1_1=-1, \ \ \ \ \phi^2_1=0, \ \ \ \ \phi^3_1=0,&\\
&\partial_{a^1}(\phi^1_2)=1, \ \ \ \ \partial_{a^1}(\phi^2_2)= 0, \ \ \ \ \partial_{a^1}(\phi^3_2)=0,&\\
&\partial_{a^1}(\phi^1_3)=-2\phi^1_2, \ \ \ \ \partial_{a^1}(\phi^2_3)=-2\phi^2_2, \ \ \ \ \partial_{a^1}(\phi^3_3)=-2\phi^3_2, &
\end{eqnarray*}
from which we obtain
\begin{eqnarray*}
&\phi^1_2=a^1+\tilde{f}^1(a^2,a^3), \ \ \ \ \phi^2_2=\tilde{f}^2(a^2,a^3), \ \ \ \  \phi^3_2=\tilde{f}^3(a^2,a^3),&\\
&\phi^1_3=-(a^1)^2-2a^1\tilde{f}^1+\tilde{g}^1(a^2,a^3), \ \ \ \phi^2_3=-2a^1\tilde{f}^2+\tilde{g}^2(a^2,a^3), \ \ \
\phi^3_3=-2a^1\tilde{f}^3+\tilde{g}^3(a^2,a^3).&
\end{eqnarray*}
From the equations of $\phi^i_2$ on $a^1=0$ and $\partial_{a^2}(\phi^i_3)$ we have
\begin{eqnarray*}
& \tilde{f}^1=0, \ \ \ \ \tilde{f}^2=-1, \ \ \ \ \tilde{f}^3=0 & \\
&\partial_{a^2}(\tilde{g}^1)=-\tilde{g}^1, \ \ \ \ \partial_{a^2}(\tilde{g}^2)=-\tilde{g}^2, \ \ \ \ \partial_{a^2}(\tilde{g}^3)=-\tilde{g}^3. &
\end{eqnarray*}
Solving the previous equations and imposing  $\tilde{g}^i=-\delta^{i}_3$ we get
$$\tilde{g}^1=0, \ \ \ \ \tilde{g}^2=0, \ \ \ \ \tilde{g}^3=-e^{-a^2}, $$
so that we find
\begin{eqnarray*}
V_{G_1}|_{\mathcal{K}}&=&-\partial_{a^1}\\
V_{G_2}|_{\mathcal{K}}&=&a^1\partial_{a^1}-\partial_{a^2}\\
V_{G_3}|_{\mathcal{K}}&=&-(a^1)^2\partial_{a^1}+2a^1\partial_{a^2}-e^{-a^2}\partial_{a^3}.
\end{eqnarray*}
We remark that we have been able to obtain the expressions  of $V_{G_i}$ without information on the manifold $\mathcal{K}$. This fact is a strong consequence of the Lie algebra structure of $G_i$.\\

Once we have the expressions of $\phi^i_j$, we can explicitly compute the finite dimensional SDE related to our SPDE, which is
\begin{equation}\label{equation_algorithm5}
dA^i_t=\phi^i_{\alpha}(A^1_t,...,A^h_t) \circ dS^{\alpha}_t,
\end{equation}
where the semimartingales $S^{\alpha}$ are the same ones of equation \refeqn{equation_definitionSPDE1}, and, if we know  the processes $A^1,...,A^h$, we can explicitly compute the solutions $U^1_t(x),...,U^n_t(x)$. Note that the hypothesis that $G_1,...,G_h$ admit a common filtration plays an important role. In fact, if we project the manifold $\mathcal{H}$ on the manifold defined by the algebra $\mathfrak{G}_0$, we find a finite set of functions $H_1,...,H_l$ such that $H_i \in \mathfrak{G}_0$ and $H_i(p)=0$ if and only if $p \in \mathcal{H}$. Since $\bar{V}_{G_i}(H) \in \mathfrak{G}_0$ we have that, for any $b^1,...,b^h \in \mathbb{R}^h$, $\bold{\Phi}_{(b^1,...,b^h)}(H^i) \in \mathfrak{G}_0$. Therefore,  the solution $U_t(x) \in \cinf(M,N)$  it is the unique smooth function such that
\begin{equation}\label{equation_algorithm7}
\bold{\Phi}_{(A^1_t,...,A^h_t)}(H^i)(x,U_t(x),\partial^{\sigma}(U_t(x)))=0.
\end{equation}
The previous set of equations completely determines the function $U_t(x)$. In the particular case  $\mathfrak{G}_0=\mathfrak{F}_0$ (the set of smooth functions which depend only on $x^i,u^j$) we have that $\bold{\Phi}^{x^i}_{(b^1,...,b^h)}$ and $\bold{\Phi}^{u^j}_{(b^1,...,b^h)}$ depend only on $x$ and $u$; this means that $U^i_t(x)$ is the unique solution to the equations
\begin{equation}\label{equation_algorithm6}
\bold{\Phi}^{u^i}_{(A^1_t,...,A^h_t)}(x,U_t(x))-f(\bold{\Phi}^x_{(A^1_t,...,A^h_t)}(x,U_t(x))=0.
\end{equation}
In this way we can reduce our infinite dimensional SPDE to the finite dimensional SDE \refeqn{equation_algorithm5} and to the algebraic (or, more generally, analytic) relations \refeqn{equation_algorithm6}.\\

Let us  now consider Case 2, where the  SPDE is defined by a function $F$, which does not admit characteristics, and by the functions $G_1,...,G_h$ which, as in the previous case, admit characteristics. In this case we can choose a manifold $\mathcal{H}$  defined by the functions
$$u^i_{\sigma}-\partial^{\sigma}(f)(x,a^0),$$
where
$$\partial_{a^0}(f)(x,a^0)=F(x,f(x,a^0),\partial^{\sigma}(f)(x,a^0).$$
In order to obtain the manifold $\mathcal{K}$  as in the previous case,  we require that there exist $h+1$ indices $i_0,...i_h \in \{1,...,n\}$ and $h+1$ multi-indices $\sigma_0,...,\sigma_h \in \mathbb{N}$ such that
\begin{equation}\label{equation_degenerate2}
\left(
\begin{array}{cccc}
D^{\sigma_0}(F^{i_0})(x,f_0(x),\partial^{\tau}(f_0)(x)) & D^{\sigma_0}(G^{i_0}_1)(x,f_0(x),\partial^{\tau}(f_0)(x)) & ... & D^{\sigma_0}(G^{i_0}_h)(x,f_0(x),\partial^{\tau}(f_0)(x))\\
... &... & ... & ...\\
D^{\sigma_h}(F^{i_h})(x,f_0(x),\partial^{\tau}(f_0)(x)) & D^{\sigma_h}(G^{i_h}_1)(x,f_0(x),\partial^{\tau}(f_0)(x)) & ... &D^{\sigma^h}(G^{i_h}_h)(x,f_0(x),\partial^{\tau}(f_0)(x))\end{array} \right),
\end{equation}
 is non singular (here $f_0(x)=f(x,0)$). Therefore, we can define a set of new functions, which we denote by $a^0(x,u,u_{\sigma}),...,a^h(x,u,u_{\sigma})$, such that
$$\bold{\Phi}^*_{(a^1(x,u,u_{\sigma}),...,a^h(x,u,u_{\sigma}))}(u^{i_k}_{\sigma_k})-\partial^{\sigma_k}(f(\bold{\Phi}^x_{a^1(x,u,u_{\sigma}),...,a^h(x,u,u_{\sigma})}(x,u,u_{\sigma_k},a^0(x,u,u_{\sigma}))=0$$
and we can consider the submanifold $\mathcal{K}$ defined as the set of zeros of
$$
\left.\bold{\Phi}^*_{(a^1(x,u,u_{\sigma)},...,a^h(x,u,u_{\sigma}))}(u^i_{\sigma})-\partial^{\sigma}(f^i)(\bold{\Phi}^x_{(a^1(x,u,u_{\sigma)},...,a^h(x,u,u_{\sigma}))}(x,u,u_{\sigma}),a^0(x,u,u_{\sigma}))\right. .
$$

Under these hypotheses for the vector fields $V_{G_i}$  on the manifold $\mathcal{K}$,  an analogue of Theorem \ref{theorem_algorithm} holds.

\begin{theorem}\label{theorem_algorithm2}
In the previous setting $D_i(a^j)|_{\mathcal{K}}=0$ and furthermore
\begin{eqnarray*}
\phi^l_i(a^1,...,a^h)=-\delta^l_i & & \text{when }a^0,a^1,...,a^{i-1}=0\\
\partial_{a^k}(\phi^l_i(a^1,...,a^h))=-\lambda_{k,i}^p \phi^l_p-\sum_{r > k} \psi^r_i \partial_{a^r}(\psi^l_k(a^1,...,a^h))
&&\text{when }a^0,a^1,...,a^{k-1}=0\text{ and }k \geq i,
\end{eqnarray*}
\end{theorem}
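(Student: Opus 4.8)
The plan is to adapt, with the extra parameter $a^0$ carried along, the proofs of Lemma~\ref{lemma_algorithm1} and Theorem~\ref{theorem_algorithm}. Write $h^{i_k}_{\sigma_k}(x,u,u_{\sigma},a^0)=u^{i_k}_{\sigma_k}-\partial^{\sigma_k}(f)(x,a^0)$ for the $h+1$ functions ($k=0,\dots,h$) selected by the nonsingularity of \eqref{equation_degenerate2}, so that on $\mathcal{K}$ the functions $a^0,\dots,a^h$ are characterized implicitly by
$$\bold{\Phi}^*_{(a^1,\dots,a^h)}(h^{i_k}_{\sigma_k})(x,u,u_{\sigma},a^0)=0,\qquad k=0,\dots,h.$$
The key preliminary remark is that, since $\partial_{a^0}(f)=F(x,f,\partial^{\sigma}(f))$, Remark~\ref{remark_evolution} applied to the coordinate functions $u^{i_k}_{\sigma_k}$ gives $\partial_{a^0}\big(\partial^{\sigma_k}(f)(x,a^0)\big)=D^{\sigma_k}(F)(x,f(x,a^0),\partial^{\tau}(f)(x,a^0))$; this is exactly why differentiating the defining relations in the $a^0$-direction produces the $F$-column of \eqref{equation_degenerate2}, whereas differentiating in the $a^j$-direction ($j\geq1$) produces the $G_j$-columns.

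First I would establish $D_i(a^j)|_{\mathcal{K}}=0$ for every $j=0,\dots,h$. Applying $D_i$ to each of the $h+1$ defining relations and using the chain rule splits each into $\sum_{j=0}^{h}D_i(a^j)\,\partial_{b^j}\big(\bold{\Phi}^*_{(b)}(h^{i_k}_{\sigma_k})(x,u,u_{\sigma},b^0)\big)\big|_{b=a}$ plus the direct term $D_i\big(\bold{\Phi}^*_{(b)}(h^{i_k}_{\sigma_k})\big)\big|_{b=a}$. For the direct term, Theorem~7.1 of \cite{DMevolution} gives $\bold{\Phi}_{(b),*}(D_i)=B^r_i D_r$, so that $D_i\big(\bold{\Phi}^*_{(b)}(h^{i_k}_{\sigma_k})\big)=\bold{\Phi}^*_{(b)}(B^r_i)\,\bold{\Phi}^*_{(b)}\big(D_r(h^{i_k}_{\sigma_k})\big)$; since $D_r(h^{i_k}_{\sigma_k})=h^{i_k}_{\sigma_k+1_r}$ is again one of the functions defining $\mathcal{H}$, the defining relations of $\mathcal{K}$ force $\bold{\Phi}^*_{(b)}\big(D_r(h^{i_k}_{\sigma_k})\big)|_{b=a}$ to vanish on $\mathcal{K}$. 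What remains is a homogeneous linear system in $D_i(a^0),\dots,D_i(a^h)$ whose coefficient matrix is the $\bold{\Phi}$-transported version of \eqref{equation_degenerate2} (its $b^0$-column being the $F$-column, by the preliminary remark), hence nonsingular near $\pi^{-1}(x_0)$, so $D_i(a^j)|_{\mathcal{K}}=0$.

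Next, because $V_{G_i}$ is a vertical evolution vector field we have $V_{G_i}(x^j)=0$, and in the coordinates $(x^1,\dots,x^m,a^0,\dots,a^h)$ on $\mathcal{K}$ we may write $V_{G_i}|_{\mathcal{K}}=\phi^j_i\,\partial_{a^j}$ with $\phi^j_i=V_{G_i}(a^j)$. For the $\delta$-condition I would apply $\bar{V}_{G_i}$ to the defining relations; by the previous step $\bar{V}_{G_i}(a^j)|_{\mathcal{K}}=V_{G_i}(a^j)|_{\mathcal{K}}$, so
$$0=\Big[\phi^j_i\,\partial_{b^j}\big(\bold{\Phi}^*_{(b)}(h^{i_k}_{\sigma_k})\big)+\bar{V}_{G_i}\big(\bold{\Phi}^*_{(b)}(h^{i_k}_{\sigma_k})\big)\Big]\Big|_{b=a,\mathcal{K}}.$$
When $a^0=a^1=\dots=a^{i-1}=0$ the flows $\Phi^1,\dots,\Phi^{i-1}$ are the identity and, $\Phi^i$ being the flow of $\bar{V}_{G_i}$ and $\bar{V}_{G_i}$ being inert on the parameter $b^0$, one has $\bar{V}_{G_i}\big(\bold{\Phi}^*_{(b)}(h^{i_k}_{\sigma_k})\big)=\partial_{b^i}\big(\bold{\Phi}^*_{(b)}(h^{i_k}_{\sigma_k})\big)$ there; since the matrix $\big(\partial_{b^j}(\bold{\Phi}^*_{(b)}(h^{i_k}_{\sigma_k}))\big)$ is nonsingular (it is \eqref{equation_degenerate2} evaluated at $a^0=0$), the linear system forces $\phi^j_i=-\delta^j_i$ on $\{a^0=a^1=\dots=a^{i-1}=0\}$, in particular $\phi^0_i=0$ there. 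Finally the recursion is obtained by expanding the Lie algebra relation $[V_{G_k},V_{G_i}]=\lambda^p_{k,i}V_{G_p}$ in these coordinates, which gives $\phi^m_k\,\partial_{a^m}(\phi^l_i)-\phi^m_i\,\partial_{a^m}(\phi^l_k)=\lambda^p_{k,i}\phi^l_p$, and then restricting to $\{a^0=a^1=\dots=a^{k-1}=0\}$ — which is contained in $\{a^0=\dots=a^{i-1}=0\}$ because $k\geq i$ — and inserting the values of $\phi^m_k$ and $\phi^m_i$ already known there, which reduces the identity to the stated recursion.

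I expect Step~1 to be the main obstacle. One has to check carefully that the implicit system for $(a^0,\dots,a^h)$ genuinely couples the $F$-evolution — encoded in $a^0$, which is \emph{not} a flow parameter on $\jinf(M,N)$ since $F$ admits no characteristics — with the characteristic flows of $G_1,\dots,G_h$, and that after differentiation by $D_i$ the coefficient matrix is exactly the transported \eqref{equation_degenerate2}; this hinges on the identity $\partial_{a^0}\partial^{\sigma}(f)=D^{\sigma}(F)$ together with the vanishing of the cross-terms $\bold{\Phi}^*_{(b)}\big(D_r(h^{i_k}_{\sigma_k})\big)$ on $\mathcal{K}$, which uses Theorem~7.1 of \cite{DMevolution}. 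Once $D_i(a^j)|_{\mathcal{K}}=0$ is in hand, the remaining steps are essentially a verbatim transcription of the proof of Theorem~\ref{theorem_algorithm}.
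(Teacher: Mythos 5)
Your proposal is correct and follows exactly the route the paper intends: it fleshes out the stated one-line proof ("completely analogous to Lemma \ref{lemma_algorithm1} and Theorem \ref{theorem_algorithm}") by carrying the extra parameter $a^0$ through the implicit-function and chain-rule arguments, with the identity $\partial_{a^0}\big(\partial^{\sigma}(f)(x,a^0)\big)=D^{\sigma}(F)(x,f(x,a^0),\dots)$ from Remark \ref{remark_evolution} supplying the $F$-column of the (now $(h+1)\times(h+1)$) nonsingular matrix \eqref{equation_degenerate2}, precisely as the paper indicates. No gaps; the remaining steps are, as you say, a transcription of the proof of Theorem \ref{theorem_algorithm}.
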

\begin{proof}
The proof of this Theorem is completely analogous to the proofs of Lemma \ref{lemma_algorithm1} and Theorem \ref{theorem_algorithm}, exploiting the fact that
$\partial_{b^0}(H(x,f(x,b^0),...))=V_{F}(H)(x,f(x,b^0),...)$ for any function $H \in \mathfrak{F}$ (see Remark \ref{remark_evolution}).
${}\hfill$\end{proof}\\

\noindent Thanks to  Theorem \ref{theorem_algorithm2}, all the machinery developed for Case 1  can be extend to Case 2.
Before  concluding this section, we want to spend few words about the proposed   algorithm  and the non local SPDE  considered in Remark \ref{remark_SPDE1}. In this case equation \refeqn{equation_algorithm5} does not hold, but can  be replaced by
\begin{equation}\label{equation_algorithm8}
dA^i_t=\tilde{\Psi}_{\alpha}(A^1_t,...,A^h_t)\phi^i_{\alpha}(A^1_t,...,A^h_t) \circ dS^{\alpha}_t,
\end{equation}
where $\tilde{\Psi}_{\alpha}(b^1,...,b^h)$ are given by
$$\tilde{\Psi}_{\alpha}(b^1,...,b^h)=\Psi_{\alpha}(K(x,b^1,...,b^h)),$$
and  $K$ is given by relations of the form \refeqn{equation_algorithm7}, that in the particular case $\mathfrak{G}_0=\mathfrak{F}_0$ become
\begin{equation}\label{equation_algorithm9}
\bold{\Phi}^{u^i}_{(b^1,...,b^h)}(x,K(x,b^1,...,b^h))-f(\bold{\Phi}^x_{(b^1,...,b^h)}(x,K(x,b^1,...,b^h))=0.
\end{equation}
We remark that equations \refeqn{equation_algorithm5} and  \refeqn{equation_algorithm6} are not decoupled in the non-local case, and so for solving \refeqn{equation_algorithm9} it becomes essential to write   equations \refeqn{equation_algorithm8}.\\
Anyway, there is one case in which it is possible to get explicitly equation \refeqn{equation_algorithm8} without  solving equation \refeqn{equation_algorithm9}: suppose that $\Psi_{\alpha}(u(x))$ are of the form
$$\Psi_{\alpha}(u(x))=R_{\alpha}(\partial^{\sigma^{\alpha}_1}(u)(h^{\alpha}_1),...,\partial^{\sigma^{\alpha}_r}(u)(k^{\alpha}_r)),$$
for some smooth functions $R_{\alpha}$. If we introduce the new variables $H^{\alpha}_i=\partial^{\sigma^{\alpha}_i}(U_t)(k^{\alpha}_i)$ we can exploit equations \refeqn{equation_algorithm6} in order to prove that $H^{\alpha}_i$ solve the following $A^i_t$ dependent SDE
\begin{equation}\label{equation_algorithm10}
dH^{\alpha}_{j,t}= \mathfrak{H}^{\alpha}_{j,\beta}(H_t,A_t) \circ dS^{\beta}_t
\end{equation}
Indeed, supposing that $H^{\alpha}_{j,t}=U(h^{\alpha}_r)$, we have
\begin{equation}\label{equation_algorithm11}
\begin{array}{c}
\left.\partial_{u}(\bold{\Phi}^{u^i}_{(A^1_t,...,A^h_t)}(x,u)-f(\bold{\Phi}^x_{(A^1_t,...,A^h_t)}(x,u)) )\right|_{u=H^{\alpha}_{j,t}} \circ
dH^{\alpha}_{j,t}+\\
+\left.\partial_{b^i}(\bold{\Phi}^{u^i}_{(b^1,...,b^h)}(x,H^{\alpha}_t)-f(\bold{\Phi}^x_{(b^1,...,b^h)}(x,H^{\alpha}_t))\right|_{b^1=A^1_t,...,b^h=A^h_t}
\circ dA^i_t=0.
\end{array}
\end{equation}
Using equation \refeqn{equation_algorithm11} and equation \refeqn{equation_algorithm8} we obtain  SDE \refeqn{equation_algorithm10}. In this way, even if we are not able to solve  \refeqn{equation_algorithm9}, we can anyway write  explicitly a finite dimensional SDE (given by  equations \refeqn{equation_algorithm8} and \refeqn{equation_algorithm10}), which provides the solution to the initial SPDE.

\section{Examples}\label{section_examplesbis}

\subsection{The proportional volatility HJM model}\label{subsection_HJM}

In this section we consider the problem of finding finite dimensional solutions to the SPDE which naturally arises in the Heath, Jarrow and Morton (HJM)
model to describe the evolution of the interest rate (see \cite{Heath1992}). In this setting, the problem of finding finite dimensional solutions
is called \emph{consistency problem} (see \cite{Bjork1999,Filipovic2001}).
The studies on this topic, and in particular  the works of Filipovic, Tappe and Tiechmann \cite{Filipovic1,Filipovic3,Filipovic2,Tappe2016},
gave us great inspiration for developing the theoretical framework of this paper.
In this section we use our method to provide a closed formula for the solutions to a particular case of HJM model.
Although this SPDE has already been studied, to the best of our knowledge, this is the first time that  an explicit closed formula for its solution is provided.  \\
We consider the following SPDE
\begin{equation}\label{equation_HJM1}
dU_t(x)=\left(\partial_x(U_t)(x)+\Psi(U_t)^2U_t(x)\left(\int_0^x{U_t(y)dy}\right)\right)dt+\Psi(U_t)U_t(x)dW_t,
\end{equation}
where $W_t$ is a Brownian motion and $\Psi: H_w \rightarrow \mathbb{R}$ is a smooth functional defined in  a suitable Hilbert space
$\cinf_0(\mathbb{R}_+) \subset H_w$. Equation \refeqn{equation_HJM1} is closely related to the HJM model. Indeed, if
$$P(t,T)=\exp\left(-\int_t^T{f(s,T)ds} \right)=\exp\left(-\int_t^T{U_{s}(T-s)ds}\right)$$
is the random function which describes the price of a bound at time $t$ with maturity time $T \geq t$, in the HJM framework the evolution of $f$
(called the \emph{forward curve} or the \emph{forward rate}) is described by
$$df(t,T)=f(0,T)+\int_0^t{\alpha(s,T)ds}+\sum_{\beta=1}^k\int_0^t{\sigma_{\beta}(s,T)dW^{\beta}},$$
where
$$\alpha(t,T)=\sum_{\beta=1}^r \sigma_{\beta}(t,T)\int_0^t{\sigma_{\beta}(s,T)ds}$$
and $\sigma_{\beta}(s,T)$ are stochastic predictable processes with respect to $s$. The function $U_t(x)=f(t,t+x)$ is the Musiela parametrization of the forward curve and solves an SPDE of the form \refeqn{equation_HJM1}. In
particular we have equation \refeqn{equation_HJM1} when we choose the volatility of the forward curve $\sigma(t,T)$ proportional to the forward
curve itself
$$\sigma(t,T)=\Psi(f(t,t+x))f(t,T),$$
where $\Psi$ is a functional of the previous form. The proportional HJM model was considered for the first time by Morton  in the
case $\Psi=\Psi_0 \in \mathbb{R}$. In particular, in  \cite{Morton1989},  he proved a result  implying that equation \refeqn{equation_HJM1} has explosion time almost surely
finite (it is possible to choose $\Psi$ non constant such that equation \refeqn{equation_HJM1} has solution for any time $t >0$). In this
subsection we provide an explicit solution formula for equation \refeqn{equation_HJM1}. Although the method used is equivalent to the one proposed
in \cite{Filipovic2} (thus the methods of \cite{Filipovic2} provides the same solution formula) this one seems to be the first time where an  explicit solution formula
is given.\\
In order to explicitly compute the solution to equation  \refeqn{equation_HJM1} we consider the functional space $H_w$ given by
$$H_w=\left\{h\text{ absolutely continuous}\left|\int_0^{+\infty}{(h'(x))^2w(x)dx},\lim_{x \rightarrow +\infty}h(x)=0 \right.\right\}.$$
In \cite{Filipovic2001} it is proved that \refeqn{equation_HJM1} admits a (local in time) unique solution in the Hilbert space $H_w$ when $w$ is
an increasing $C^1$ function such that  $\int_0^{\infty}{w(x)^{-1/3}dx}<+\infty$. \\
The first step to apply our methods to equation \refeqn{equation_HJM1} is  transforming this non-local equation into a local one introducing
a new variable $v$ such that the process $V$ associated with $v$ is
$$V_t(x)=\int_0^x{U_t(y)dy}.$$
With this variable, equation \refeqn{equation_HJM1} becomes
$$dV_t(x)=\left(\partial_x(V_t)(x)-\partial_x(V_t)(0)+\frac{\Psi(\partial_x(V_t))^2(V_t(x))^2}{2}\right)dt+\Psi(\partial_x(V_t))V_t(x) dW_t.$$
If we transform the previous equation into a Stratonovich type equation of the form
\begin{equation}\label{equation_HJM2}
dV_t(x)=\left(\partial_x(V_t)(x)-\partial_x(V_t)(0)+\frac{\Psi(\partial_x(V_t))^2(V_t(x))^2}{2}-\frac{\tilde{\Psi}(\partial_x(V_t))V_t(x)}{2}\right)dt+\Psi(\partial_x(V_t))V_t(x)
\circ dW_t,
\end{equation}
where
$$\tilde{\Psi}(f(x))=\Psi(f(x))^2+\Psi(f(x))\cdot \partial_a(\Psi(e^af(x)))|_{a=0},$$
we can apply the case 1 of the theory proposed in Section \ref{section_general_algorithm} with
\begin{eqnarray*}
G_1&=&v_x\\
G_2&=&1\\
G_3&=&v\\
G_4&=&v^2.
\end{eqnarray*}
It is simple to see that the following commutation relations hold
$$[G_1,G_i]=0, \ \ \ \ [G_2,G_3]=G_2, \ \ \ \ [G_2,G_4]=2G_3, \ \ \ \ [G_3,G_4]=G_4.$$
The characteristic vector fields of $G_1,...,G_4$ are
\begin{eqnarray*}
\bar{V}_{G_1}&=&-\partial_x\\
\bar{V}_{G_2}&=&\partial_v\\
\bar{V}_{G_3}&=&v\partial_v+...+v_{(n)}\partial_{v_{(n)}}+...\\
\bar{V}_{G_4}&=&v^2\partial_v+...+D^n_x(v^2)\partial_{v_{n}}+...
\end{eqnarray*}
They generate the following flows
\begin{eqnarray*}
\Phi^{1,*}_a(x)&=&x-a\\
\Phi^{1,*}_a(v)&=&v\\
\Phi^{2,*}_b(v)&=&v+b\\
\Phi^{3,*}_c(v)&=&e^c v\\
\Phi^{4,*}_d(v)&=&\frac{v}{1-d v}\\
\Phi^{i,*}_k(x)&=&x,
\end{eqnarray*}
for $i=2,3,4$. We take as manifold $\mathcal{H}$ the one dimensional manifold defined by
$$v-f(x)=0,$$
 and by all its differential consequences, where $f$ is smooth, $f(0)=0$, and $f' \in H_w$.\\
The submanifold $\mathcal{K}$, constructed as in Section \ref{section_general_algorithm}, is given by the union of the zeros of
$$\bold{\Phi}_{(a,b,c,d)}(v-f(x))=\frac{e^c(v+b)}{1-de^c(v+b)}-f(x-a),$$
and all its differential consequences. From the particular form of $\mathcal{K}$ we can explicitly compute  the associated finite dimensional function
$$K(x,a,b,c,d)=\frac{e^{-c} f(x-a)}{1+d f(x-a)}-b.$$
Since the operators $\mathfrak{G}_i:v(x) \longmapsto
\partial_x(G_i(v(x)))$ for $i=2,3,4$ are locally Lipschitz in $H_w$ (see \cite{Filipovic2001}), $\partial_x(K) \in H_w$ and the norm
$\|\partial_x(K)\|_{H_w}$ is bounded for $(a,b,c,d)$ in a suitable neighborhood of the origin. For this
reason and for Remark \ref{remark_strong_weak} the finite dimensional solution obtained from $K$ is the unique solution in $H_w$ to equation
\refeqn{equation_HJM1} with initial condition $U_0(x)=f'(x)$.\\
If on $\mathcal{K}$ we choose the coordinate system $(x,a,b,c,d)$ as in Section \ref{section_general_algorithm}, and using Theorem \ref{theorem_algorithm} we obtain
\begin{eqnarray*}
V_{G_1}&=&-\partial_a\\
V_{G_2}&=&-\partial_b\\
V_{G_3}&=&-\partial_c+b\partial_b\\
V_{G_4}&=&-e^{-c}\partial_d+2b\partial_c-b^2\partial_b.
\end{eqnarray*}
With this coordinate system the equation for $A_t,B_t,C_t,D_t$ are
\begin{equation}\label{equation_HJM3}
\begin{array}{rcl}
dA_t&=&-dt\\
dB_t&=&\left(e^{-C_t}\frac{\partial_x(f)(-A_t)}{(1+D_t f(-A_t))^2}-\frac {\Psi_0^2}{2} B_t^2 -\frac {\tilde {\Psi}_0}{2} B_t \right)dt+ \Psi_0 B_t \circ dW_t\\
dC_t&=&\left( \Psi_0 B_t+ \frac {\tilde{\Psi}_0}{2}\right)dt-\Psi_0 \circ dW_t\\
dD_t&=&-\frac {\Psi_0}{2}e^{-C_t}dt
\end{array}
\end{equation}
and the solution to \eqref{equation_HJM1} is given by
\begin{equation}\label{equation_HJM4}
U_t(x)=\partial_x(V_t)(x)=\frac{e^{-C_t}\partial_x(f)(x-A_t)}{(1+D_tf(x-A_t))^2}.
\end{equation} It is evident form the explicit solution \refeqn{equation_HJM4} and from equations \refeqn{equation_HJM3} that the solution
$U_t(x)$ has explosion time almost surely finite as proved by Morton.

\subsection{The stochastic Hunter-Saxton equation}\label{subsection_HS}

In \cite{Holm2016} Holm and Tyranowski propose the following stochastic version of the Camassa-Holm (CH) equation
\begin{equation}\label{equation_Camassa_Holm}
\begin{array}{rcl}
dM_t(x)&=&(-\partial_x(U_t(x)M_t(x))-\partial_x(U_t(x))M_t(x))dt-\sum_{\beta=1}^r(\partial_x(\xi_{\beta}(x)M_t(x))+\\
&&+\partial_x(\xi_{\beta}(x))M_t(x))\circ
dW^{\beta}_t,\\
M_t(x)&=&U_t(x)-\alpha^2\partial_{xx}(U_t(x)).
\end{array}
\end{equation}
This equation is motivated by the study of stochastic perturbations of variational dynamical equation of hydrodynamic type (see
\cite{Arnaudon2012,Cruzeiro2016, Holm2015}). In particular, in \cite{Holm2015} Holm proposes a general method for constructing stochastic
perturbation which preserves some geometrical and physical properties of the considered hydrodynamic PDE. Applying this general principle to the CH
equation in one space dimension we obtain equation \refeqn{equation_Camassa_Holm}. Furthermore, in \cite{Holm2016}
Holm and Tyranowski find that this kind of stochastic perturbation of CH equation preserves the soliton solution, i.e. it is possible to find an
infinite set of finite dimensional solutions to equation \refeqn{equation_Camassa_Holm} which are exactly the stochastic counterpart of the
finite dimensional families of soliton solutions to CH equation. \\
In the following we study this phenomenon in more detail exploiting the methods proposed in the previous section.
Since  equation \refeqn{equation_Camassa_Holm} cannot be directly treated in our framework, being a strongly non local equation, and
it is not possible to transform equation \refeqn{equation_Camassa_Holm} into a local one using the methods proposed
 in Section \ref{subsection_HJM}, we consider a new equation,  related with  \refeqn{equation_Camassa_Holm}, admitting only  finite dimensional solutions.
In particular, equation \refeqn{equation_Camassa_Holm}, in the limit $\alpha >> 1$, can be reduced to the following  stochastic version of Hunter-Saxton equation
\begin{equation}\label{equation_Hunter1}
\begin{array}{rcl}
d\partial_{xx}(U_t(x))&=&(-\partial_x(U_t(x)\partial_{xx}(U_t(x)))-\partial_x(U_t(x))\partial_{xx}(U_t(x)))dt+\\
&&-\sum_{\beta=1}^r(\partial_x(\xi_{\beta}(x)\partial_{xx}(U_t(x)))+\partial_x(\xi_{\beta}(x))\partial_{xx}(U_t(x)))\circ dW^{\beta}_t.
\end{array}
\end{equation}
Choosing a suitable set of possible solutions and the function $\xi_{\beta}(x)$, we can reduce equation \refeqn{equation_Hunter1} to a
weakly local SPDE of the form \refeqn{equation_nonlocal}. In particular,  we consider  $\xi_{\beta}(x)=K_{\beta}+H_{\beta}x$, where $K_{\beta},H_{\beta}$ are suitable constants, and we suppose that the semimartingale $U_t(x)$ depending on the parameter $x$ solution to the equation \refeqn{equation_Hunter1} satisfies
\begin{equation}\label{equation_Hunter2}
\int_{-\infty}^{+\infty}{|x\partial_x(U_t(x))|dx},\int_{-\infty}^{+\infty}{|x\partial_{xx}(U_t(x))|dx},\int_{-\infty}^{+\infty}{|x\partial_{xxx}(U_t(x))|dx} < +\infty. \end{equation} Furthermore we suppose that there are constants  $-\infty \leq a_1<...<a_k \leq +\infty$, for some $k \in \mathbb{N}$, and some constants $C_1,...,C_k \in \mathbb{R}$
such that
$$\sum_{i=1}^k C_i U_t(a_i)=const.$$
Under these conditions, integrating equation \refeqn{equation_Hunter1} first for $\int_\infty^x$ and then for $\int_{a_i}^x$ we obtain that equation \refeqn{equation_Hunter1} is equivalent to the following set of relations
\begin{eqnarray}
&&\begin{array}{rcl}
dU_t(x)&=&(-U_t(x)\partial_x(U_t(x))+\frac{1}{2}V_t(x)+\Psi_0(U_t(x)))dt+\\
&&-\sum_{\beta=1}^r{((K_{\beta}+H_{\beta}x)\partial_x(U_t(x))+\Psi_{\beta}(U_t(x)))}
\circ dW^{\beta}_t\end{array}\label{equation_Saxton1}\\
&&\begin{array}{rcl}
dV_t(x)&=&(-U_t(x)\partial_x(V_t(x))+\Xi_0(U_t(x)))dt+\\
&&-\sum_{\beta=1}^r{((K_{\beta}+H_{\beta}x)\partial_x(V_t(x))+H_{\beta}V_t(x)+\Xi_{\beta}(U_t(x)))}\circ
dW^{\beta}_t\end{array}\label{equation_Saxton2}\\
&&V_t(x)=\sum_{i=1}^k C_i \int_{a_i}^x{(\partial_y(U_t(y)))^2dy}\label{equation_Saxton3}\\
&&\sum_{i=1}^k C_i dU_t(a_i)=0,\label{equation_Saxton4}
\end{eqnarray}
where
\begin{eqnarray*}
\Psi_0(f(x))&=&\sum_{i=1}^k C_i f(a_i)\partial_x(f)(a_i)\\
\Psi_{\beta}(f(x))&=&-\sum_{i=1}^k C_i (K_{\beta}+H_{\beta}a_i)\partial_x(f)(a_i)\\
\Xi_0(f(x))&=&\sum_{i=1}^k C_i(\partial_x(f)(a_i))^2f(a_i)\\
\Xi_{\beta}(f(x))&=&-\sum_{i=1}^k
C_i
(K_{\beta}+H_{\beta}a_i)(\partial_x(f)(a_i))^2
\end{eqnarray*}
It is easy to prove that equation \refeqn{equation_Saxton1} and equation \refeqn{equation_Saxton2} preserve (for $V_t(x),U_t(x)$ smooth in
space) the relation \refeqn{equation_Saxton3}, i.e. if  \refeqn{equation_Saxton3} is satisfied for $t=0$ and $V_t,U_t$ are solutions with
respect to the definition of Remark \ref{remark_SPDE1} to the SPDEs \refeqn{equation_Saxton1} and \refeqn{equation_Saxton2}, then relation
\refeqn{equation_Saxton3} is satisfied for any $t >0$. Furthermore, the three equations \refeqn{equation_Saxton1}, \refeqn{equation_Saxton2} and
\refeqn{equation_Saxton3} imply  \refeqn{equation_Saxton4} if $U_t,V_t$ are smooth. Thus, equation \refeqn{equation_Hunter1}, with
solutions satisfying \refeqn{equation_Saxton4} and with behaviour at infinity given by \refeqn{equation_Hunter2}, is equivalent to the two
dimensional SPDE \refeqn{equation_Saxton1} and \refeqn{equation_Saxton2} with initial conditions $U_0(x),V_0(x)$ satisfying
\refeqn{equation_Saxton3}.\\

In the following we show how it is possible to construct infinite smooth finite dimensional solutions to equation \refeqn{equation_Saxton1} and \refeqn{equation_Saxton2}. If we
consider the following smooth functions in $\mathfrak{F}^2$
\begin{eqnarray*}
G_1&=&\left(\begin{array}{c}
xu_x\\
xv_x+v\end{array} \right),\\
G_2&=&\left(\begin{array}{c} uu_x-\frac{1}{2}v \\
uv_x\end{array} \right),\\
G_3&=&\left(\begin{array}{c}
u_x\\
v_x\end{array} \right),\\
G_4&=&\left(\begin{array}{c} 1 \\
0\end{array} \right),\\
G_5&=&\left(\begin{array}{c} 0 \\
1\end{array} \right),
\end{eqnarray*}
it is easy to see that the functions $G_i$ admit strong characteristics and  generate a Lie algebra with commutation relations given by
\begin{center}
\begin{tabular}{r|c|c|c|c|c|}
$[\cdot,\cdot]$ &$G_1$ & $G_2$ & $G_3$ & $G_4$ & $G_5$\\ \hline
$G_1$& $0$ & $G_2$ & $G_3$ & $0$ & $-G_5$ \\ \hline
$G_2$& $-G_2$ & $0$ & $0$ & $-G_3$ &  $\frac{1}{2}G_4$\\ \hline
$G_3$& $-G_3$ & $0$ & $0$ & $0$ & $0$\\ \hline
$G_4$& $0$ & $G_3$ & $0$ & $0$ & $0$  \\ \hline
$G_5$& $G_5$ & $-\frac{1}{2}G_4$ & $0$ & $0$ & $0$\\ \hline
\end{tabular}
\end{center}
Furthermore $G_1,...,G_5$ admit characteristic vector fields which are
\begin{eqnarray*}
\bar{V}_{G_1}&=&V_{G_1}-xD_x=-x\partial_x+v\partial_v+u_x\partial_{u_x}+2v_x\partial_{v_x}+...+
nu_{(n)}\partial_{u_{(n)}}+(n+1)v_{(n)}\partial_{v_{(n)}}+...\\
\bar{V}_{G_2}&=&V_{G_2}-uD_x=-u\partial_x-\frac{v}{2}\partial_u+...+\left(D^n_x(uu_x)-\frac{v_{(n)}}{2}-uu_{(n+1)}\right)\partial_{u_{(n)}}+\\
&&+\left(D^n_x(uv_x)-uv_{(n+1)}\right)\partial_{v_{(n)}}+...\\
\bar{V}_{G_3}&=&V_{G_3}-D_x=-\partial_x\\
\bar{V}_{G_4}&=&V_{G_4}=\partial_u\\
\bar{V}_{G_5}&=&V_{G_5}=\partial_v.
\end{eqnarray*}
Using the characteristic flows of the vector fields $\bar{V}_{G_i}$ it is possible to apply the results of previous sections. In order to simplify the treatment of this example we suppose that $k=2$, $a_1=-\infty,a_2=+\infty$.
In this case, since we are looking for solutions satisfying $\lim_{x \rightarrow \pm \infty}\partial_x(U_t)(x) = \lim_{x \rightarrow \pm \infty}x \partial_x(U_t)(x) = 0$, we have that $\Psi_0=\Psi_{\beta}=\Xi_0=\Xi_{\beta}=0$. This means that equations \refeqn{equation_Saxton1} and \refeqn{equation_Saxton2} are  local SPDEs. Furthermore, in this case, using the theory of stochastic characteristics  it is possible to prove that, for any smooth initial conditions, there exists a unique (local in time) solution. This means that the smooth finite dimensional solutions which we found with our algorithm are the unique solutions to equation \refeqn{equation_Hunter1} such that $C_1 U_t(-\infty)+ C_2 U(+\infty)=const$ and equation \refeqn{equation_Hunter2} hold. \\

Since $\Psi_0=\Psi_{\beta}=\Xi_0=\Xi_{\beta}=0$, we can consider only the functions $G_1,G_2$ and $G_3$. The most general one dimensional submanifold $\mathcal{H}$ in $\jinf(\mathbb{R},\mathbb{R}^2)$ is defined by the equations
\begin{eqnarray*}
&g_1:=u-f(x)=0&\\
&g_2:=v-g(x)=0&
\end{eqnarray*}
together with all their differential consequences $D^n_x(g_i)=0$. In order to have a manifold $\mathcal{H}$ representing a possible initial condition for our problem, we require condition \refeqn{equation_Hunter2} for $U_0(x)=f(x)$ and that
$$g(x)=C_1 \int_{-\infty}^x{(f'(y))^2dy}+C_2 \int_{+\infty}^x{(f'(y))^2dy}.$$
The first step of our algorithm is to consider the flows of the characteristic vector fields $\bar{V}_{G_i}$ ($i=1,2,3$) given by
\begin{eqnarray*}
\Phi^{1,*}_a(x)&=&e^{-a} x\\
\Phi^{1,*}_a(u)&=&u\\
\Phi^{1,*}_a(v)&=&e^a v\\
\Phi^{2,*}_b(x)&=&x-b u+\frac{b^2}{4}v\\
\Phi^{2,*}_b(u)&=&u-\frac{b}{2}v\\
\Phi^{2,*}_b(v)&=&v\\
\Phi^{3,*}_c(x)&=&x-c\\
\Phi^{3,*}_c(u)&=&u\\
\Phi^{3,*}_c(v)&=&v.
\end{eqnarray*}
Therefore, the manifold  $\mathcal{K}$ is defined by the union on $(a,b,c) \in \mathbb{R}^3$ of the zeros of
\begin{eqnarray*}
\tilde{g}_1(a,b,c,x,u,v)=\bold{\Phi}^*_{(a,b,c)}(g_1)=u-\frac{be^{a}}{2}v-f\left(e^{-a}x-bu+\frac{b^2}{4}e^{a}v-c\right)\\
\tilde{g}_2(a,b,c,x,u,v)=\bold{\Phi}^*_{(a,b,c)}(g_2)=e^{a}v-g\left(e^{-a}x-bu+\frac{b^2}{4}e^{a}v-c\right),
\end{eqnarray*}
and all their differential consequences with respect to $x$.
If, on the manifold $\mathcal{K}$, we  use the coordinate system $(x,a,b,c)$, exploting Theorem \ref{theorem_algorithm}, the three vector fields read
\begin{eqnarray*}
V_{G_1}&=&-\partial_a\\
V_{G_2}&=&-e^{-a}\partial_b\\
V_{G_3}&=&-e^{-a}\partial_c.
\end{eqnarray*}
So the process $A_t,B_t,C_t$  generating the solutions to the considered SPDEs are
\begin{eqnarray*}
dA_t&=&-\sum_{\beta=1}^rH_{\beta}dW^{\beta}_t\\
dB_t&=&e^{-A_t}dt\\
dC_t&=&\sum_{\beta=1}^r K_{\beta}e^{-A_t} \circ dW^{\beta}_t.
\end{eqnarray*}
Since the system for $A,B,C$ is triangular, it can be solved explicitly using only iterated Riemann and It\^o integrals e.g. when one fixes the initial conditions $A_0=B_0=C_0=0$. In this way we obtain the solution to the SPDEs \refeqn{equation_Saxton1} and \refeqn{equation_Saxton2} with initial condition
$U_0(x)=f(x)$ and $V_0(x)=g(x)$.\\
The solutions $U_t(x)$ and $V_t(x)$ to the initial SPDE can be obtained solving the following system of non-linear equations
\begin{eqnarray}
U_t(x)-\frac{B_te^{A_t}}{2}V_t(x)-f\left(e^{-A_t}x-B_tU_t(x)+\frac{B_t^2e^{A_t}}{4}V_t(x)-C_t\right)&=&0\label{equation_Saxton_final1}\\
e^{A_t}V_t(x)-g\left(e^{-A_t}x-B_tU_t(x)+\frac{B_t^2e^{A_t}}{4}V_t(x)-C_t\right)&=&0.\label{equation_Saxton_final2}
\end{eqnarray}
Since $f,g$ are bounded and with bounded derivatives, the system \refeqn{equation_Saxton_final1} and \refeqn{equation_Saxton_final2} admits a unique
solution whenever $A_t,B_t,C_t$ are in a suitable neighborhood of the origin. The fact that the system \refeqn{equation_Saxton_final1} and
\refeqn{equation_Saxton_final2} admits a solution only if $A,B,C$ are suitably bounded is related to the fact that the solutions to the deterministic
Hunter-Saxton equation develop singularity in the first derivative in finite time (see, e.g., \cite{Hunter1991}). This property is conserved by the stochastic perturbation
considered here and it is consistent with analogous results on stochastic CH equation obtained in \cite{Crisan2017}.\\

\begin{remark}
If we choose the functions $\xi_{\beta}(x)$ in equation \refeqn{equation_Hunter1}
different from $K_{\beta}+H_{\beta}x$, not only we are no longer able to reduce equation \refeqn{equation_Hunter1}
 to a local one, but the generic
solution to \refeqn{equation_Hunter1} is not finite dimensional. This does not mean that equation \refeqn{equation_Hunter1} has only
infinite dimensional solutions. Indeed it is possible to verify, using the procedure proposed in \cite{Holm2016}, that equation
\refeqn{equation_Hunter1} has infinite many families of (weak) finite dimensional solutions of the form
$$U_t(x)=\sum_{i=1}^k P^i_t |x^i-Q^i_t|,$$
where $\sum_{i=1}^k P^i_t=0$ and the process $(P^i,Q^i)$ solves a finite dimensional SDE. The class of SPDEs possessing a large set of
families of finite dimensional solutions of increasing dimension does not reduce to equations of the form \refeqn{equation_Camassa_Holm} or
\refeqn{equation_Hunter1}: indeed we provide another example in Section
\ref{subsection_filtering}. In our opinion the class of SPDEs which, despite not having all finite dimensional solutions, possess many families of
finite dimensional solutions deserves more attention and a further detailed investigation.
\end{remark}

\subsection{A stochastic filtering model}\label{subsection_filtering}

In this section we consider an equation inspired by stochastic filtering. In particular, given two  stochastic
processes  $X_t \in \mathcal{X}$ and $Y_t \in \mathcal{Y}$, where $\mathcal{X}$ and $\mathcal{Y}$ are two metric spaces (for example
 $\mathcal{X}=\mathbb{R}^k$ and $\mathcal{Y}=\mathbb{R}^h$), stochastic filtering theory faces the problem of describing the conditional probability
 $\mathbb{P}^X(\cdot |Y)$ of the process $X$ given the process of observation $Y$. Although this one  is in general an infinite dimensional problem,
 there are situations where it is possible to partially describe the probability $\mathbb{P}^X(\cdot |Y)$ using only a finite dimensional process
 $B_t$ on a finite dimensional manifold $B$. When the filtering problem can be reduced to a finite dimensional process we speak of
  finite dimensional filters. Examples of such filters are the Kalman filter, the  Benes filter and the related ones (see \cite{Albeverio1990,Bain,Hazewinkel1989}).\\
In many cases it is possible to reduce the problem of finding and studying finite dimensional filters to the problem of calculating finite
dimensional solutions to particular SPDEs. Indeed, if $\mathcal{X}=M \subset \mathbb{R}^m$ and the process $X_t$, conditioned with
respect to the process $Y_t$, solves a Markovian Brownian-motion-driven SDE, it is possible to describe the filtering problem using a second
order linear SPDEs. There are different ways to obtain this description (in the following we use two of them). The most common  method is to
study a function $\rho_t(x)$ related to the conditional density $p_t(x)$ of the random variable $X_t$ on $M$ conditioned with respect to
$Y_{[0,t]}$. In particular, it is possible to prove that $\rho_t(x)$ solves a second order linear SPDEs called Zakai equation. A
 finite dimensional filter is a filtering problem whose Zakai equation admits (some or all, depending on the definition) finite dimensional solutions.\\
This is the first problem where the research of finite dimensional solution to an SPDEs was studied in detail. Indeed, the theory proposed in
the previous sections has been deeply influenced by the research in this field, and  in particular by the works of Cohen de Lara
\cite{DeLara1bis,DeLara2}.
With our algorithm it is possible to calculate all the solutions to Zakai equation associated with the finite dimensional filters appearing in the previous literature.\\
Instead of applying our algorithm to some already well studied finite dimensional filter, in this section we propose a new filtering problem for which
we are able to calculate some finite dimensional solutions. \\
In particular, we consider the following SPDE
\begin{equation}\label{equation_filtering1}
\begin{array}{rcl}
dU_t(x)&=&\left( \frac{\sigma^2}{2}x\partial^2_x(U_t)(x)+\beta\partial_x(U_t)(x)+\alpha x\partial_x(U_t)(x)+\gamma x U_t(x)+\delta
U_t(x)\right)dt\\
&&+\partial_x(U)_t(x) \circ dS^1_t+ x\partial_x(U_t)(x) \circ dS^2_t,
\end{array}
\end{equation}
where $\sigma,\beta,\alpha,\gamma,\delta$ are some constants and $S^1,S^2$ are the semimartingales driving the equation (below some restrictions on these constants and semimartingales will be discussed), which is related to several  problems of stochastic filtering.\\
For example, if $S^1_t=0$ and $\beta=-\tilde{\beta} <0,\alpha=-\tilde{\alpha} \leq 0, \gamma=0, \delta=-\tilde{\alpha}$  and
$\sigma^2>\tilde{\beta}$  equation
\refeqn{equation_filtering1} is the Zakai equation giving the density of the conditioned probability of the following
filtering problem
\begin{equation}\label{equation_filtering2}
dX_t=(\beta+\alpha X_t)dt+\sigma \sqrt{X_t}dW_t+ X_t d\tilde{S}^2_t,
\end{equation}
with observation given by
\begin{equation}\label{equation_filtering3}
d\tilde{Y}_t=d\tilde{S}^2_t,
\end{equation}
where $\tilde{S}^2_t$ is any semimartingale independent from $W_t$ and $S^2=\tilde{S}^2_t-\frac{1}{2}[\tilde{S},\tilde{S}]_t$.
Equation \refeqn{equation_filtering2} can be considered as a general affine  continuous process perturbed by a noise linearly
dependent on the process itself. It is well known that one dimensional continuous markovian affine processes admit closed form for their
probability densities. Unfortunately the perturbation \refeqn{equation_filtering2} does not admit closed form solution even in the simplest
case where $\tilde{S}^2_t$ is a Brownian motion. \\
In this case the interesting solutions to the SPDE \refeqn{equation_filtering1} should satisfy $U_t(0)=0$, $U_t(x) \geq 0$. These two conditions
guarantee that, if $\int_0^{+\infty}{U_0(x)dx}=1$, then $\int_0^{+\infty}{U_t(x)dx}=1$. Therefore, using the techniques of \cite{Manita2016}, we
can prove that any  solution (smooth in space) to  equation \refeqn{equation_filtering1} is also a solution to the filtering problem
\refeqn{equation_filtering2} and \refeqn{equation_filtering3}. In this case, it is simple to prove that
$$\tilde{F}=\frac{\sigma^2}{2}xu_{xx}+\beta u_x, \ \ G_1=xu_x, \ \ G_2=u  $$
form a three dimensional Lie algebra. For this reason, whenever we know a solution to the equation
$$\partial_a(f(x,a))=\tilde{F}(f(x,a))$$
with $f(0,a)=0$, $f(x,0) \geq 0$ and $\int_0^{+\infty}{f(x,0)dx}=1$, we can apply our technique to equation \refeqn{equation_filtering1}.\\
In particular we consider the two dimensional manifold with boundary $\mathcal{H}$
$$u-f(x,a)=0,$$
for $a\geq 0$. Since the characteristic vector fields
\begin{eqnarray*}
\bar{V}_{G_1}&=&-x\partial_x+u_x\partial_{u_x}+...+nu_{(n)}\partial_{u_{(n)}}+...\\
\bar{V}_{G_2}&=&u\partial_u+u_x\partial_{u_x}+...+u_{(n)}\partial_{u_{(n)}}+...
\end{eqnarray*}
have characteristic flows
\begin{eqnarray*}
\Phi^{1,*}_b(x)&=&e^{-b}x\\
\Phi^{1,*}_b(u)&=&u\\
\Phi^{2,*}_c(x)&=&x\\
\Phi^{2,*}_c(u)&=&e^{c}u,
\end{eqnarray*}
the manifold $\mathcal{K}$ is defined by the union on $(a,b,c) \in \mathbb{R}_+ \times \mathbb{R}^2$ solution to
$$e^{c}u-f(e^{-b}x,a)=0,$$
and all its differential consequences. Using the coordinate system $(x,a,b,c)$ on $\mathcal{K}$, by Theorem \ref{theorem_algorithm2}, we have
\begin{eqnarray*}
V_{\tilde{F}}&=&e^{-b}\partial_a\\
V_{G_1}&=&-\partial_b\\
V_{G_2}&=&-\partial_c.
\end{eqnarray*}
Therefore, the solutions to  SPDE \refeqn{equation_filtering1} can be found solving the following triangular system
\begin{eqnarray*}
dA_t&=&e^{-B_t}dt\\
dB_t&=&-(\alpha dt+ dS^{2}_t)\\
dC_t&=&-\delta dt,
\end{eqnarray*}
and the finite dimensional solution to the Zakai equation is given by
$$U_t(x)=e^{-C_t}f(e^{-B_t}x,A_t).$$\\
Another interesting problem described by equation \refeqn{equation_filtering1} for $S^1_t$ not equal to zero is the
filtering problem
\begin{equation}\label{equation_filtering4}
dX_t=(\beta+ \alpha X_t)dt+\sigma \sqrt{X_t}dW_t+d\tilde{S}^1_t+X_t d\tilde{S}^2_t
\end{equation}
with observations
\begin{equation}\label{equation_filtering5}
\begin{array}{c}
dY^1_t=d\tilde{S}^1_t\\
dY^2_t=d\tilde{S}^2_t,
\end{array}
\end{equation}
where we suppose that the $\mathbb{R}^2$ semimartingale $(\tilde{S}^1,\tilde{S}^2)$ is independent from $W_t$ (instead we do not request that
$\tilde{S}^1$ and $\tilde{S}^2$ are independent). Although for a general noise $\tilde{S}^1$ the solution $X_t$ to equation \refeqn{equation_filtering4}
does not remain positive for all the times $t$, it is possible to provide  sufficient conditions in order to ensure  that this is the case. Suppose that
$\tilde{S}^1$ is almost surely of bounded variation. This means that there are an increasing predictable process $\hat{S}^{1,i}$ and a
decreasing predictable process $\hat{S}^{1,d}$ such that $\tilde{S}^1=\hat{S}^{1,i}+\hat{S}^{1,d}$. If $\hat{S}^{1,d}$ is absolutely continuous
and
$$\frac{d\hat{S}^{1,d}_t}{dt}+\beta>\frac{\sigma^2}{2},$$
for $t >0$ and for any solution $X_t$ to equation \refeqn{equation_filtering4} such that $X_0 >0$ almost surely, we have that   $X_t>0$ almost surely for any $t >0$.
The Zakai equation of filtering problem \refeqn{equation_filtering4} and \refeqn{equation_filtering5} has exactly the form \refeqn{equation_filtering1}.
Unfortunately, for a deep reason that will be clarified below, we cannot deal directly with the Zakai equation of the filtering problem
\refeqn{equation_filtering4} and \refeqn{equation_filtering5} and we have to consider  another SPDE related with this filtering problem.\\
Given  a bounded function $g \in C^2((0,+\infty))$, let us  consider the process dependent on the space parameter $x \in (0,+\infty)$
\begin{equation}\label{equation_filtering6}
V_{t}(x)=\mathbb{E}\left[g(X_T)e^{\int_t^T{(\gamma X_s+\delta)ds}}\left| \{X_t=x\}\vee\mathcal{G}_{t,T} \right.\right],
\end{equation}
where
$$\mathcal{G}_{t,T}=\sigma\{\tilde{S}^1_T-\tilde{S}^1_s,\tilde{S}^2_T-\tilde{S}^2_s|s \in [t,T]\}.$$
The process $V_t$ is adapted with respect to the inverse filtration $\mathcal{G}_{t,T}$ with $t \in (0,T]$. If
$\tilde{S}^2_{T}-\tilde{S}^2_{T-t}$ is a semimartingale with respect to the filtration $\mathcal{G}_{T,t}$ (an example of such processes is given by
Brownian motions or solutions to Markovian Brownian motion driven SDEs),  we can generalize Theorem 2.1 of \cite{Pardoux1979} (see also
\cite{Bally2001,Pardoux1994}) proving that $U_t(x)=V_{T-t}(x)$ solves equation \refeqn{equation_filtering1} with
$S^1_t=\tilde{S}^1_T-\tilde{S}^1_{T-t}$ and $S^2_t=\tilde{S}^2_T-\tilde{S}^2_{T-t}-\frac{1}{2}[\tilde{S}^2_T-\tilde{S}^2_{T-t},\tilde{S}^2_T-\tilde{S}^2_{T-t}]_t$. If we can explicitly find solutions to  equation \refeqn{equation_filtering1}, we have a closed formula
for the conditional expected value \refeqn{equation_filtering6}, extending in this way the closed formula of some expected values of Markovian
continuous affine one dimensional processes.\\
It is important to note that any bounded smooth solution to equation \refeqn{equation_filtering1} is a solution to the problem
\refeqn{equation_filtering6} since such kinds of solutions are unique (this fact can be proven using the coordinate change
$\tilde{x}=e^{-S^2_t}x$ and then using some standard reasoning based on the maximum principle for parabolic PDEs see, e.g.
Theorem 4.1 and Theorem 4.3 of \cite{Friedman1975}). For all these reasons we are interested in finding solutions to equation
\refeqn{equation_filtering1} when $\beta>0$ and $\gamma<0$.\\
We remark that, if $S^2$ is not identically zero, we do not have a finite dimensional Lie algebra. Indeed, in this case, if we put
\begin{eqnarray*}
F&=&xu_{xx}\\
G_3&=&u_x\\
G_4&=&xu,
\end{eqnarray*}
we have
\begin{equation}\label{equation_commutator}
\underset{n\text{ times }}{[F,[F,[...[F,u_x]...]]]}=n! u_{(n+1)}
\end{equation}
and so $F,G_1,G_2,G_3,G_4$ cannot form a finite dimensional Lie algebra on all the space $\jinf(\mathbb{R}_+,\mathbb{R})$. This means that
the  solution $U_t(x)$ to equation \refeqn{equation_filtering1} with a general initial condition $U_t(x)=f(x)$ is
not finite dimensional. This is why we choose to consider the problem \refeqn{equation_filtering6} instead of the Zakai equation related
to the filtering problem \refeqn{equation_filtering4} and \refeqn{equation_filtering5}. Indeed, a smooth solution to the Zakai equation on
$(0,+\infty)$ should satisfy $U_t(0)=0$ in order to be the conditional probability
density
of the filtering problem \refeqn{equation_filtering4} and \refeqn{equation_filtering5}. Unfortunately we are not able to construct solutions to  equations of the form
\refeqn{equation_filtering1} satisfying this property if $S^1 \not =0$. This is due to the fact that $F,G_1,...,G_4$ do not form a finite dimensional
Lie algebra. Conversely a sufficient condition ensuring that a smooth solution $U_t(x)$ to equation \refeqn{equation_filtering1} represents the
integral \refeqn{equation_filtering6} is that $U_t(x)$ is bounded in $(0,+\infty)$. We are able to construct bounded finite dimensional solutions
to  equation \refeqn{equation_filtering1}, so giving (for suitable functions $g$) the explicit expression of the conditional expectation
\refeqn{equation_filtering6}. \\
In order to construct families of finite dimensional solutions to  equation \refeqn{equation_filtering1} we exploit the particular form of the
commutators \refeqn{equation_commutator}. Indeed let $\mathcal{K}$ be the finite dimensional submanifold of $\jinf(\mathbb{R}_+,\mathbb{R})$
defined by
\begin{equation}\label{equation_filtering7}
h=u_{(n)}+\sum_{k=0}^{n-1}\mu^{k}u_{(k)}=0,
\end{equation} and all its differential consequences with respect to $x$ considering $\mu^k$ as
constants. Using that
\begin{equation}\label{equation_filtering8}
\begin{array}{ccl}
[F,u_{(k)}]&=&k u_{(k+1)}\\
\left[ G_1,u_{(k)} \right]&=& k u_{(k)}\\
\left[ G_2,u_{(k)} \right]&=&0\\
\left[ G_3,u_{(k)} \right]&=&0\\
\left[ G_4,u_{(k)} \right]&=&k u_{(k-1)},
\end{array}
\end{equation}
we are able to prove that $V_{F},V_{G_1},...,V_{G_4} \in T\mathcal{K}$ on $\mathcal{K}$. It is important to note that the previous relation does not hold
 if we consider the submanifold $\tilde{\mathcal{K}}$ defined by equation \refeqn{equation_filtering7} with all its differential
consequences where $\mu^k$ are fixed constant and not variable constants (with respect to $x$). This situation is similar to the
previous section where $\mathcal{K}$ is defined by $\bold{\Phi}^*_{\alpha}(h^i)$ and all its differential consequences, where $h^i=0$ defines the
submanifold $\mathcal{H}$. \\
If we choose on the manifold $\mathcal{K}$ the coordinate system given by $(x,\mu^0,...,\mu^{n-1},u,....,u_{(n-1)})$, it is possible to prove that
$V_F(\mu^k),V_{G_i}(\mu^k)$ depend only on $\mu^0,...,\mu^k$. Furthermore, using  relation \refeqn{equation_filtering8}, it is possible to compute
$V_F(\mu^k),V_{G_i}(\mu^k)$. In order to illustrate  the explicit  calculations, we consider the submanifold $\mathcal{K}$  defined
by
$$h=u_{xx}+\lambda u_x+\mu u=0$$
and we calculate $V_F$. We have that
\begin{eqnarray*}
V_F(h)|_{\mathcal{K}}&=&\left(V_{F}(u_{xx})+\lambda V_F(u_x) + \mu V_F(u)+ V_F(\lambda) u_x+ V_F(\mu) u\right)|_{\mathcal{K}}\\
&=&\left([F,u_{xx}]+\lambda[F,u_x]+\mu [F, u]+V_F(\lambda) u_x+ V_F(\mu) u\right)|_{\mathcal{K}}\\
&=&\left(2u_{xxx}+\lambda u_{xx}+V_F(\lambda) u_x+ V_F(\mu) u\right)|_{\mathcal{K}}\\
&=&\left(2(-\lambda u_{xx}-\mu u_x)+\lambda (-\lambda u_x-\mu u)+V_F(\lambda) u_x+ V_F(\mu) u\right)|_{\mathcal{K}}\\
&=&\left((V_F(\lambda)-2\mu+\lambda^2)u_x+(V_F(\mu)+\lambda \mu)u \right)|_{\mathcal{K}}.
\end{eqnarray*}
Since $u_x,u$ can take any values on $\mathcal{K}$ we can find the expression for $V_F(\lambda),V_F(\mu)$ on $\mathcal{K}$. Using similar
methods we have
\begin{eqnarray*}
V_{F}&=&(-\lambda^2+2\mu)\partial_{\lambda}-\mu\lambda\partial_{\mu}-x(\lambda u_x+\mu u)\partial_u-((\lambda u_x+\mu u)-x(\lambda^2-\mu)u_x-x\lambda\mu u)\partial_{u_x}\\
V_{G_1}&=&\lambda\partial_{\lambda}+2\mu\partial_{\mu}+xu_x\partial_{u}+(u_x-x(\lambda u_x+\mu u))\partial_{u_x}\\
V_{G_2}&=&u\partial_u+u_x\partial_{u_x}\\
V_{G_3}&=&u_x\partial_{u}-(\lambda u_x+\mu u)\partial_{u_x}\\
V_{G_4}&=&-2\partial_{\lambda}-\lambda\partial_{\mu}+xu\partial_u+(xu_x+u)\partial_{u_x}.
\end{eqnarray*}
The SDE for $L_t,M_t,U_t(0),U_{x,t}(0)$ becomes
\begin{eqnarray*}
dL_t&=&\left(\frac{\sigma^2}{2}(-L_t^2+2M_t)+\alpha L_t-2\gamma \right)dt+L_t \circ dS^2_t\\
dM_t&=&\left(-\frac{\sigma^2}{2}L_tM_t+2\alpha M_t-\gamma L_t\right)dt+2M_t \circ dS^2_t\\
\left(\begin{array}{c} dU_t(0)\\
dU_{x,t}(0) \end{array}\right)&=&\left(\begin{array}{cc} \delta & \beta\\
-M_t\left(\frac{\sigma^2}{2}+\beta\right)+\gamma & -L_t\left(\frac{\sigma^2}{2}+\beta \right)+\alpha +\delta \end{array} \right) \cdot \left(\begin{array}{c} U_t(0)\\
U_{x,t}(0) \end{array}\right) dt+\\
&& + \left(\begin{array}{cc} 0 & 1\\
-M_t & -L_t\end{array} \right) \cdot \left(\begin{array}{c} U_t(0)\\
U_{x,t}(0) \end{array}\right) \circ dS^1_t+\left(\begin{array}{cc} 0 & 0\\
1 & 0\end{array} \right) \cdot \left(\begin{array}{c} U_t(0)\\
U_{x,t}(0) \end{array}\right) \circ dS^2_t
\end{eqnarray*}
The solution to  SPDE \refeqn{equation_filtering1} can be obtained solving the system
\begin{eqnarray*}
\partial_x(U_{t})(x)&=&U_{x,t}(x)\\
\partial_x(U_{x,t}(x))&=&-L_t U_{x,t}(x)- M_t U_t(x).
\end{eqnarray*}
Thus we have two possibilities. If $\lambda_0^2-4\mu_0>0$ then
$$U_t(x)=A_te^{C_t x}+B_t e^{D_t x},$$
where
\begin{eqnarray*}
C_t&=&\frac{-L_t+\sqrt{L_t^2-4M_t}}{2}\\
D_t&=&\frac{-L_t-\sqrt{L_t^2-4M_t}}{2} \\
A_t&=&\frac{D_tU_t(0)-U_{x,t}(0)}{D_t-C_t}\\
B_t&=&\frac{-C_tU_t(0)+U_{x,t}(0)}{D_t-C_t}.
\end{eqnarray*}
If $\lambda_0^2-4\mu_0<0$ we have
$$U_t(x)=e^{R_tx}(A_t \cos(O_tx)+B_t \sin(O_t x)),$$
where
\begin{eqnarray*}
R_t&=&\frac{-L_t}{2}\\
O_t&=&\frac{\sqrt{4M_t-L_t^2}}{2}\\
A_t&=&U_t(0)\\
B_t&=&\frac{-R_tU_t(0)+U_{x,t}(0)}{O_t}.
\end{eqnarray*}

\section*{Acknowledgements}

The author would like to thank Prof. Paola Morando for her  useful comments, suggestions and corrections of the first draft of the paper. This work was supported by Gruppo Nazionale Fisica Matematica (GNFM-INdAM) through the grant: \virgolette{Progetto Giovani, Symmetries and reduction for differential equations: from the deterministic to the stochastic case}.

\bibliographystyle{plain}
\bibliography{finite_dimensional(6)}

\end{document}